\DeclareFontFamily{U}{mathx}{}
\DeclareFontShape{U}{mathx}{m}{n}{<-> mathx10}{}
\DeclareSymbolFont{mathx}{U}{mathx}{m}{n}
\DeclareMathAccent{\widehat}{0}{mathx}{"70}
\DeclareMathAccent{\widecheck}{0}{mathx}{"71}
\newcommand*{\mailto}[1]{\href{mailto:#1}{\nolinkurl{#1}}}
\newcommand{\arxiv}[1]{\href{http://arxiv.org/abs/#1}{arXiv:#1}}
\newcommand{\bbC}{{\mathbb{C}}}
\newcommand{\bbN}{{\mathbb{N}}}
\newcommand{\bbR}{{\mathbb{R}}}
\newcommand{\bbT}{{\mathbb{T}}}
\newcommand{\cB}{{\mathcal B}}
\newcommand{\cD}{{\mathcal D}}
\newcommand{\cF}{{\mathcal F}}
\newcommand{\cH}{{\mathcal H}}
\newcommand{\cM}{{\mathcal M}}
\newcommand{\cS}{{\mathcal S}}
\newcommand{\cV}{{\mathcal V}}
\newcommand{\gga}{\mathfrak{a}}
\newcommand{\gb}{\mathfrak{b}}
\newcommand{\gs}{\mathfrak{s}}
\DeclareMathOperator{\ran}{ran}
\DeclareMathOperator{\dom}{dom}
\DeclareMathOperator{\tr}{tr}
\DeclareMathOperator*{\sgn}{sgn}
\renewcommand{\ln}{\text{\rm ln}}
\newcommand{\loc}{\operatorname{loc}}
\newcommand{\locunif}{\operatorname{loc \, unif}}
\DeclareMathOperator*{\LIM}{\text{l.i.m.}}
\newcommand{\no}{\notag}
\newcommand{\lb}{\label}
\newcommand{\f}{\frac}
\newcommand{\ol}{\overline}
\newcommand{\wti}{\widetilde}
\newcommand{\hatt}{\widehat} 
\newcommand{\dott}{\,\cdot\,}
\newcommand{\bi}{\bibitem}
\renewcommand{\le}{\leqslant}
\let\geq\geqslant
\let\leq\leqslant
\def\theequation{\@arabic\c@equation}
\numberwithin{equation}{section}
\newtheorem{theorem}{Theorem}[section]
\newtheorem{proposition}[theorem]{Proposition}
\newtheorem{lemma}[theorem]{Lemma}
\newtheorem{corollary}[theorem]{Corollary}
\newtheorem{definition}[theorem]{Definition}
\newtheorem{hypothesis}[theorem]{Hypothesis}
\theoremstyle{remark}
\newtheorem{remark}[theorem]{Remark}
\newtheorem{example}[theorem]{Example}
\begin{document}

\title[A Generalized Birman--Schwinger Principle]{A Generalized Birman--Schwinger Principle and Applications to One-Dimensional Schr\"odinger Operators With Distributional Potentials}

\author[F.\ Gesztesy]{Fritz Gesztesy}
\address{Department of Mathematics, 
Baylor University, Sid Richardson Bldg., 1410 S.\,4th Street, Waco, TX 76706, USA}
\email{\mailto{Fritz\_Gesztesy@baylor.edu}}
\urladdr{\url{https://math.artsandsciences.baylor.edu/person/fritz-gesztesy-phd}}

\author[R.\ Nichols]{Roger Nichols}
\address{Department of Mathematics (Dept.~6956), The University of Tennessee at Chattanooga, 
615 McCallie Avenue, Chattanooga, TN 37403, USA}
\email{\mailto{Roger-Nichols@utc.edu}}
\urladdr{\url{https://sites.google.com/mocs.utc.edu/rogernicholshomepage/home}}

\date{\today}
\thanks{To appear in {\it Funct. Anal. Appl.}}
\@namedef{subjclassname@2020}{\textup{2020} Mathematics Subject Classification}
\subjclass[2020]{Primary: 34L40, 47A55; Secondary: 46F99, 47A56, 47B07.}
\keywords{Schr\"odinger operator, distributional potential, resolvent equation, Tiktopoulos's formula, Birman--Schwinger principle, Bessel potential, Sobolev multiplier.}

\dedicatory{Dedicated, with great admiration, to the memory of Dima Yafaev $($1948--2024$)$}

\begin{abstract} 
Given a self-adjoint operator $H_0$ bounded from below in a complex Hilbert space $\cH$, the corresponding scale of spaces $\cH_{+1}(H_0) \subset \cH \subset \cH_{-1}(H_0) = [\cH_{+1}(H_0)]^*$, and a fixed $V\in \cB(\cH_{+1}(H_0),\cH_{-1}(H_0))$, we define the operator-valued map $A_V(\dott):\rho(H_0)\to \cB(\cH)$ by
\[
A_V(z):=-\big(H_0-zI_{\cH} \big)^{-1/2}V\big(H_0-zI_{\cH} \big)^{-1/2}\in \cB(\cH),\quad z\in \rho(H_0),
\]
where $\rho(H_0)$ denotes the resolvent set of $H_0$.  Assuming that $A_V(z)$ is compact for some $z=z_0\in \rho(H_0)$ and has norm strictly less than one for some $z=E_0\in (-\infty,0)$, we employ an abstract version of Tiktopoulos' formula to define an operator $H$ in $\cH$ that is formally realized as the sum of $H_0$ and $V$.  We then establish a Birman--Schwinger principle for $H$ in which $A_V(\dott)$ plays the role of the Birman--Schwinger operator:  $\lambda_0\in \rho(H_0)$ is an eigenvalue of $H$ if and only if $1$ is an eigenvalue of $A_V(\lambda_0)$.  Furthermore, the geometric (but not necessarily the algebraic) multiplicities of $\lambda_0$ and $1$ as eigenvalues of $H$ and $A_V(\lambda_0)$, respectively, coincide.  

As a concrete application, we consider one-dimensional Schr\"odinger operators with $H^{-1}(\bbR)$ distributional potentials.
\end{abstract}

\maketitle

{\scriptsize{\tableofcontents}}

\maketitle



\section{Introduction} \lb{s1}

The case of general (i.e., three-coefficient) singular Sturm--Liouville operators, including distributional potentials, has been studied by Bennewitz and Everitt \cite{BE83} in 1983 (see also \cite[Sect.\ I.2]{EM99}). An extremely thorough and systematic investigation, including even and odd higher-order operators defined in terms of appropriate quasi-derivatives, and in the general case of matrix-valued coefficients (including distributional potential coefficients in the context of Schr\"odinger-type operators) was presented by Weidmann \cite{We87} in 1987. However, it was not until 1999 that Neiman-zade and Shkalikov \cite{NS99} and Savchuk and Shkalikov \cite{SS99} (see also \cite{SS03}) started a new development for Sturm--Liouville (resp., Schr\"odinger) operators with distributional potential coefficients in connection with areas such as, self-adjointness proofs, spectral and inverse spectral theory, oscillation properties, spectral properties in the non-self-adjoint context, etc. While any collection of pertinent references will still be utterly incomplete, we refer to the extensive bibliography on this subject until about 2014 in \cite{EGNT13}--\cite{EGNST15} and to \cite{ALZ23}--\cite{BS09a}, \cite{BW24}--\cite{Bo23}, \cite{DM09}--\cite{EGNST15}, \cite{FHMP09}, \cite{GN22}--\cite{GMP13}, \cite{Gu19}, \cite{HH14}--\cite{HMP11a},  \cite{KMS18}--\cite{Kr17a}, \cite{LS14}--\cite{LSW24},  \cite{MGM22}--\cite{RW24}, \cite{RSY24}, \cite{Sa10}--\cite{Sh14}, \cite{YS14}. For some multi-dimensional second-order operators with distributional potential coefficients we also refer, for instance, to \cite{He89}, \cite{MV02a}--\cite{MV06}, \cite{NS06}. 

It should also be mentioned that some of the attraction in connection with distributional potential coefficients 
in the Schr\"odinger operator stems from the low-regularity investigations of solutions of the 
Korteweg--de Vries (KdV) equation. We mention, for instance, \cite{BK15}, \cite{GR14}, \cite{KM01}, \cite{KT05}, \cite{KT06}, \cite{KV19}, \cite{KVZ18}, and \cite{Ry10}. 

The particular case of point interactions as special distributional coefficients in Schr\"odinger operators received enormous attention, too numerous to be mentioned here in detail. Hence, we primarily refer to the standard monographs \cite{AGHKH05} and \cite{AK01}.

The two main topics in this paper are an abstract Birman--Schwinger principle for a Birman--Schwinger operator that factors over a scale of Hilbert spaces associated to a self-adjoint operator and its application to one-dimensional Schr\"odinger operators with distributional potentials in $H^{-1}(\bbR)$.  These topics are individually treated in Sections \ref{s2} and \ref{s4}, respectively.  More specifically, Section \ref{s2} develops an abstract Birman--Schwinger principle employing scales of Hilbert spaces $\cH_{+1}(H_0) \subset \cH \subset \cH_{-1}(H_0) = [\cH_{+1}(H_0)]^*$ (also called Gelfand triples) associated with self-adjoint operators $H_0$ in $\cH$ bounded from below, by mimicking the concrete case of Schr\"odinger operators with appropriate potentials in $\cS'(\bbR)$.  Assuming for simplicity that $H_0$ is nonnegative and that $V$ is a bounded linear operator from $\cH_{+1}(H_0):=\dom(H_0^{1/2})$ to $\cH_{-1}(H_0)$, we introduce the Birman--Schwinger operator
\begin{equation}
A_V(z) = -\big(H_0-zI_{\cH}\big)^{-1/2}V\big(H_0-zI_{\cH}\big)^{-1/2},\quad z\in \rho(H_0),
\end{equation}
in $\cH$.  Assuming that $A_V(E_0)$ is compact with norm strictly less than one for some $E_0\in (-\infty,0)$, in Theorem \ref{t5.9} we prove that
\begin{equation}\lb{1.2}
\begin{split}
R(z) := \big(H_0 - zI_{\cH} \big)^{-1/2}[I_{\cH}-A_V(z)]^{-1}\big(H_0 - zI_{\cH} \big)^{-1/2},&\\
z\in \{\zeta\in \rho(H_0)\,|\, 1\in \rho(A_V(\zeta))\}=\rho(H_0)\backslash\cD_{H_0,V},&
\end{split}
\end{equation}
where $\cD_{H_0,V}\subset\bbC$ is a discrete set, is the resolvent of a densely defined operator $H$ in $\cH$.  In addition, the difference of resolvents of $H$ and $H_0$ is compact.  The definition of $R(z)$ in \eqref{1.2} is inspired by Tiktopoulos's formula (see, e.g., \cite[Section II.3]{Si71}). The operator $H$ may be viewed as a rigorous definition of the purely formal sum ``$H_0+V$.''  In Theorem \ref{t2.11c}, the main result in Section \ref{s2} and the primary abstract result of this paper, we show that a point $\lambda_0\in \rho(H_0)$ is an eigenvalue of $H$ if and only if $1$ is an eigenvalue of $A_V(\lambda_0)$.  Moreover, we establish a one-to-one correspondence between eigenvectors of $H$ corresponding to $\lambda_0$ and eigenvectors of $A_V(\lambda_0)$ corresponding to $1$.  In particular, the geometric multiplicities of $\lambda_0$ as an eigenvalue of $H$ and $1$ as an eigenvalue of $A_V(\lambda_0)$ coincide.

As an application of the abstract results developed in Section \ref{s2}, the case of one-dimensional Schr\"odinger operators with $H^{-1}(\bbR)$-potentials is treated in great detail in Section \ref{s4}.  In Proposition \ref{p2.9} we verify that the basic abstract assumptions of Section \ref{s2} hold when $V\in H^{-1}(\bbR)$ and $H_0$ is the self-adjoint realization of (minus) the second derivative in $L^2(\bbR)$.  In this case, $\cH_{\pm1}(H_0)$ are the Sobolev spaces $H^{\pm1}(\bbR)$, respectively.  The Birman--Schwinger operator $A_V(\dott)$ is actually trace class, hence Hilbert--Schmidt, and we calculate the Hilbert--Schmidt norm of $A_V(-\kappa^2)$ for $\kappa\in (0,\infty)$.  Section \ref{s4} culminates in Theorem \ref{t4.6c}, a Birman--Schwinger principle for one-dimensional Schr\"odinger operators with potentials $V\in H^{-1}(\bbR)$.  We close Section \ref{s4} by discussing an alternative Fourier space approach to some of the main calculations carried out in the proof of Proposition \ref{p2.9}.  Finally, Appendix \ref{s0} recalls some useful results on Sobolev multipliers $V \in \cS(\bbR)$ mapping $H^1(\bbR)$ into $H^{-1}(\bbR)$.

\medskip

\noindent 
{\bf Notation.} The symbols $\widehat f$ and $\widecheck f$ denote the Fourier and inverse Fourier transforms, respectively, of appropriate functions $f$; see \eqref{2.1} for the precise definitions.  The symbol $f\ast g$ denotes the convolution of an appropriate pair of functions $f$ and $g$; see \eqref{3.2c}.  The inner product in a separable (complex) Hilbert space $\cH$ is denoted by $(\,\cdot\,,\,\cdot\,)_{\cH}$ and is assumed to be linear with respect to the second argument.   If $T$ is a linear operator mapping (a subspace of) a Hilbert space into another, then $\dom(T)$ denotes the domain of $T$.  The resolvent set and spectrum of a closed linear operator in $\cH$ will be denoted by $\rho(\,\cdot\,)$ and $\sigma(\, \cdot \,)$, respectively.  The Banach space of bounded linear operators on $\cH$ is denoted by $\cB(\cH)$.  Finally, for $p\in [1,\infty)$, the corresponding $\ell^p$-based trace ideals will be denoted by $\cB_p (\cH)$ with norms abbreviated by $\|\,\cdot\,\|_{\cB_p(\cH)}$.

\section{A Generalized Birman--Schwinger Principle} \lb{s2}

In this section we derive a generalized Birman--Schwinger principle that is applicable to Schr\"odinger operators with distributional potential coefficients.

Let $\cH$ be a separable (complex) Hilbert space with inner product $(\dott,\dott)_{\cH}$, and assume that $H_0$ is a nonnegative self-adjoint operator in $\cH$ with domain $\dom(H_0)$.

\medskip
\noindent
{\bf Convention.}  We fix a branch of the complex square root function as follows:  If $z\in \bbC\backslash\{0\}$ with $z=|z|e^{i\arg(z)}$ and $\arg(z)\in [0,2\pi)$, then $z^{1/2} = |z|^{1/2}e^{i\arg(z)/2}$ (here $|z|^{1/2}$ denotes the nonnegative square root of $|z|$ and $0^{1/2}:=0$).  The branch so chosen is analytic in $\bbC\backslash[0,\infty)$ and continuous in $\bbC_+$ up to the cut along $[0,\infty)$.\hfill $\triangleleft$

\medskip

Let $\{E_{H_0}(\lambda)\}_{\lambda\in \bbR}$ denote the strongly right-continuous spectral family uniquely associated to $H_0$ by the spectral theorem, so that
\begin{equation}
H_0=\int_{\sigma(H_0)}\lambda\, dE_{H_0}(\lambda).
\end{equation}
Square roots are then defined as follows (cf., e.g., \cite[VI.5.2]{Ka80})
\begin{align}
\big(H_0-zI_{\cH}\big)^{1/2} &= \int_{\sigma(H_0)} (\lambda - z)^{1/2}\, dE_{H_0}(\lambda),\quad z\in \bbC,\lb{2.2e}\\
\big(H_0-zI_{\cH}\big)^{-1/2} &= \int_{\sigma(H_0)} \frac{1}{(\lambda - z)^{1/2}}\, dE_{H_0}(\lambda),\quad z\in \rho(H_0).
\end{align}
In particular, \eqref{2.2e} implies stability of square root domains in the following form:
\begin{equation}
\dom\big((H_0-zI_{\cH})^{1/2}\big) = \dom\big(H_0^{1/2}\big),\quad z\in \bbC.
\end{equation}

Introducing the Hilbert space $(\cH_{+1}(H_0), (\dott,\dott)_{\cH_{+1}(H_0)})$ via, 
\begin{align} 
& \cH_{+1}(H_0) = \dom\big(H_0^{1/2}\big),    \lb{5.1} \\
& (f,g)_{\cH_{+1}(H_0)} := \big(H_0^{1/2}f,H_0^{1/2}g\big)_{\cH} + (f,g)_{\cH},\quad f,g\in \cH_{+1}(H_0) = \dom\big(H_0^{1/2}\big),     \no 
\end{align}
one notes that
\begin{equation}
\|f\|_{\cH}\leq \|f\|_{\cH_{+1}(H_0)},\quad f\in \cH_{+1}(H_0).
\end{equation}
Since $H_0^{1/2}$ is self-adjoint and hence closed in $\cH$, $\cH_{+1}(H_0)$ equipped with the inner product $(\dott,\dott)_{\cH_{+1}(H_0)}$ defined in \eqref{5.1} is a Hilbert space.  Let $\cH_{-1}(H_0):=[\cH_{+1}(H_0)]^*$ denote the conjugate dual space of bounded conjugate linear functionals on $\cH_{+1}(H_0)$.

If $f\in \cH$, then the map $\Psi_f:\cH_{+1}(H_0) \to \bbC$ defined by
\begin{equation}\lb{5.3}
\Psi_f(g) = (g,f)_{\cH},\quad g\in \cH_{+1}(H_0),
\end{equation}
is conjugate linear and satisfies
\begin{equation}
|\Psi_f(g)| \leq \|f\|_{\cH}\|g\|_{\cH_{+1}(H_0)},\quad g\in \cH_{+1}(H_0).
\end{equation}
In particular, $\Psi_f\in \cH_{-1}(H_0)$.  Therefore, identifying $\Psi_f$ and $f$, $\cH$ may be regarded as a subspace of $\cH_{-1}(H_0)$ and
\begin{equation}
\|f\|_{\cH_{-1}(H_0)} = \big\|(H_0+I_{\cH})^{-1/2}f \big\|_{\cH}\leq \|f\|_{\cH},\quad f\in \cH.
\end{equation}
In addition, one has compatibility between the pairing 
${}_{\cH_{+1}(H_0)}\langle \dott,\dott_f\rangle_{\cH_{-1}(H_0)}$ and the scalar product $(\dott,\dott)_{\cH}$ in $\cH$ via
\begin{equation}
{}_{\cH_{+1}(H_0)}\langle g,\Psi_f\rangle_{\cH_{-1}(H_0)} = (g,f)_{\cH},\quad g\in \cH_{+1}(H_0),\, f\in \cH.
\end{equation}
A possible way to describe the Hilbert space $(\cH_{-1}(H_0), (\dott,\dott)_{\cH_{-1}(H_0)})$ is then given by 
\begin{align} 
& \cH_{-1}(H_0) = [\cH_{+1}(H_0)]^*,    \no \\
& (f,g)_{\cH_{-1}(H_0)} := \big((H_0 + I_{\cH})^{-1/2} f,(H_0 + I_{\cH})^{-1/2} g\big)_{\cH},     \lb{2.11} \\
& \hspace*{3.5cm} f,g\in \cH_{-1}(H_0) = [\cH_{+1}(H_0)]^*.     \no 
\end{align}

Thus, in this manner, one arrives at the inclusion (a scale) of spaces:
\begin{equation}\lb{5.7}
\cH_{+1}(H_0) \subset \cH \subset \cH_{-1}(H_0) = [\cH_{+1}(H_0)]^*, 
\end{equation}
in which
\begin{equation}
\begin{split}
\|f\|_{\cH}&\leq \|f\|_{\cH_{+1}(H_0)},\quad f\in \cH_{+1}(H_0),\\
\|g\|_{\cH_{-1}(H_0)}&\leq \|g\|_{\cH},\quad g\in \cH.
\end{split}
\end{equation}

If $z\in \rho(H_0)$, then $\big(H_0 - zI_{\cH} \big)^{-1/2}\in \cB(\cH)$.  In fact, $\big(H_0 - zI_{\cH} \big)^{-1/2}$ maps $\cH$ to $\cH_{+1}(H_0)$ as a bounded map between Hilbert spaces.

\begin{proposition}\lb{p5.1}
If $z\in \rho(H_0)$, then $\big(H_0 - zI_{\cH} \big)^{-1/2}\in \cB(\cH,\cH_{+1}(H_0))$ and
\begin{equation}
\big\|\big(H_0 - zI_{\cH} \big)^{-1/2} \big\|_{\cB(\cH,\cH_{+1}(H_0))} \leq M(z),
\end{equation}
where
\begin{equation}\lb{5.10z}
M(z):=\big\|H_0^{1/2}\big(H_0 - zI_{\cH} \big)^{-1/2} \big\|_{\cB(\cH)} + \big\|\big(H_0 - zI_{\cH} \big)^{-1/2}\big\|_{\cB(\cH)},\quad z\in \rho(H_0).
\end{equation}
\end{proposition}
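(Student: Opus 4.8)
The plan is to fix an arbitrary $f \in \cH$, set $g := \big(H_0 - zI_{\cH}\big)^{-1/2}f$, and simply compute $\|g\|_{\cH_{+1}(H_0)}$ from the definition \eqref{5.1} of that norm, bounding the two terms that appear by the two operator norms in \eqref{5.10z}. Everything rests on one preliminary observation: that $\big(H_0 - zI_{\cH}\big)^{-1/2}$ actually maps $\cH$ into $\dom\big(H_0^{1/2}\big) = \cH_{+1}(H_0)$, and that the composition $H_0^{1/2}\big(H_0 - zI_{\cH}\big)^{-1/2}$ is a bounded operator on $\cH$, so that $M(z)$ is finite to begin with.

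The one point requiring care is this preliminary observation, and it is a routine consequence of the spectral theorem. Indeed, for $f \in \cH$,
\begin{equation}
\int_{\sigma(H_0)} \lambda \, d\big\|E_{H_0}(\lambda)\big(H_0 - zI_{\cH}\big)^{-1/2}f\big\|_{\cH}^2 = \int_{\sigma(H_0)} \frac{\lambda}{|\lambda - z|} \, d\big\|E_{H_0}(\lambda)f\big\|_{\cH}^2,
\end{equation}
and since $z \in \rho(H_0)$ one has $|\lambda - z| \geq \dist(z,\sigma(H_0)) > 0$ for $\lambda \in \sigma(H_0) \subseteq [0,\infty)$, so the continuous function $\lambda \mapsto \lambda/|\lambda - z|$ is bounded on $\sigma(H_0)$ (it has the finite limit $1$ as $\lambda \to \infty$). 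Hence the right-hand integral is finite, $g \in \dom\big(H_0^{1/2}\big) = \cH_{+1}(H_0)$, and $H_0^{1/2}\big(H_0 - zI_{\cH}\big)^{-1/2}$ coincides with the bounded operator obtained by applying the functional calculus to the bounded function $\lambda \mapsto \lambda^{1/2}(\lambda - z)^{-1/2}$; in particular $\big\|H_0^{1/2}\big(H_0 - zI_{\cH}\big)^{-1/2}\big\|_{\cB(\cH)} < \infty$ and $M(z)$ in \eqref{5.10z} is finite.

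Next I would carry out the main estimate: for $f \in \cH$ and $g = \big(H_0 - zI_{\cH}\big)^{-1/2}f$, using \eqref{5.1}, the elementary inequality $(a^2+b^2)^{1/2} \leq a+b$ valid for $a,b \geq 0$, and submultiplicativity of the operator norm,
\begin{align}
\|g\|_{\cH_{+1}(H_0)} &= \big(\big\|H_0^{1/2}g\big\|_{\cH}^2 + \|g\|_{\cH}^2\big)^{1/2} \leq \big\|H_0^{1/2}g\big\|_{\cH} + \|g\|_{\cH} \notag \\
&\leq \Big(\big\|H_0^{1/2}\big(H_0 - zI_{\cH}\big)^{-1/2}\big\|_{\cB(\cH)} + \big\|\big(H_0 - zI_{\cH}\big)^{-1/2}\big\|_{\cB(\cH)}\Big)\|f\|_{\cH} = M(z)\|f\|_{\cH}.
\end{align}
Taking the supremum over $f \in \cH$ with $\|f\|_{\cH} \leq 1$ then yields both $\big(H_0 - zI_{\cH}\big)^{-1/2} \in \cB(\cH,\cH_{+1}(H_0))$ and the claimed bound.

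There is no genuine obstacle here: the only substantive ingredient is the boundedness of the spectral multiplier $\lambda \mapsto \lambda^{1/2}(\lambda - z)^{-1/2}$ on $\sigma(H_0)$ (equivalently, that $\big(H_0 - zI_{\cH}\big)^{-1/2}$ maps $\cH$ into $\dom\big(H_0^{1/2}\big)$), and the remainder is just the definition of the $\cH_{+1}(H_0)$-norm together with the triangle inequality.
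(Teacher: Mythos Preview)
Your proof is correct and follows essentially the same approach as the paper's own proof. The paper's argument is a terse two-line version of yours: it simply asserts that $\big(H_0 - zI_{\cH}\big)^{-1/2}f \in \dom\big(H_0^{1/2}\big)$ and that the definition of $\|\dott\|_{\cH_{+1}(H_0)}$ gives the bound, whereas you have spelled out both the spectral-theorem justification for the range containment and the elementary inequality $(a^2+b^2)^{1/2} \leq a+b$ that underlies the final estimate.
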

\begin{proof}
Let $z\in \rho(H_0)$.  If $f\in \cH$, then $\big(H_0 - zI_{\cH} \big)^{-1/2}f\in \dom\big(H_0^{1/2}\big)= \cH_{+1}(H_0)$, and the definition of $\|\dott\|_{\cH_{+1}(H_0)}$ implies
\begin{align}
\big\|\big(H_0 - zI_{\cH} \big)^{-1/2}f\big\|_{\cH_{+1}(H_0)} \leq M(z) \|f\|_{\cH}.
\end{align}
\end{proof}

For $z\in \rho(H_0)$, the operator $\big(H_0 - zI_{\cH} \big)^{-1/2}$ may be extended from $\cH$ to $\cH_{-1}(H_0)$ as follows.  For $\Psi\in \cH_{-1}(H_0)$, consider the map $\wti \Psi:\cH\to \bbC$ defined by
\begin{equation}
\wti \Psi (g) = {}_{\cH_{+1}(H_0)}\big\langle \big(H_0 - \ol{z}I_{\cH} \big)^{-1/2}g,\Psi\big\rangle{}_{\cH_{-1}(H_0)},\quad g\in \cH.
\end{equation}
The map $\wti \Psi$ is a conjugate linear functional on $\cH$, and
\begin{equation}\lb{5.10}
\big|\wti \Psi (g)\big| \leq \|\Psi\|_{\cH_{-1}(H_0)}\big\|\big(H_0 - \ol{z}I_{\cH} \big)^{-1/2}g \big\|_{\cH_{+1}(H_0)}\leq M(\ol{z}) \|\Psi\|_{\cH_{-1}(H_0)}\|g\|_{\cH},\quad g\in \cH,
\end{equation}
where $M(\ol{z})$ is the constant defined by \eqref{5.10z}.

The inequality in \eqref{5.10} shows that $\wti \Psi\in \cH^*$.  By the Riesz representation theorem, $\wti \Psi$ may be identified with a vector $f_{\Psi}\in \cH$ such that $\big\|\wti \Psi\big\|_{\cH^*}=\|f_{\Psi}\|_{\cH}$.  Finally, one defines
\begin{equation}\lb{5.12}
\big(H_0 - zI_{\cH} \big)^{-1/2}\Psi = f_{\Psi}.
\end{equation}
Combining \eqref{5.10} and \eqref{5.12}, one obtains:
\begin{equation}\lb{5.14z}
\big\|\big(H_0 - zI_{\cH} \big)^{-1/2}\Psi\big\|_{\cH} \leq M(\ol{z})\|\Psi\|_{\cH_{-1}(H_0)},\quad \Psi\in \cH_{-1}(H_0).
\end{equation}
Thus, $\big(H_0 - zI_{\cH} \big)^{-1/2}\in \cB(\cH_{-1}(H_0),\cH)$.  Furthermore, one verifies that if $f\in \cH$ is identified with $\Psi_f\in \cH_{-1}(H_0)$ (cf.~\eqref{5.3}), then $\big(H_0 - zI_{\cH} \big)^{-1/2}\Psi_f = \big(H_0 - zI_{\cH} \big)^{-1/2}f$.  That is, the map $\big(H_0 - zI_{\cH} \big)^{-1/2}:\cH_{-1}(H_0)\to \cH$ defined above is an extension of the original map $\big(H_0 - zI_{\cH} \big)^{-1/2}:\cH\to \cH$ given by \eqref{2.2e}.  These considerations may be summarized as follows.

\begin{proposition}\lb{p5.2}
If $z\in \rho(H_0)$, then $\big(H_0 - zI_{\cH} \big)^{-1/2}:\cH_{-1}(H_0) \to \cH$ defined by \eqref{5.12} is an extension of $\big(H_0 - zI_{\cH} \big)^{-1/2}\in \cB(\cH)$.  Furthermore, this extension satisfies
\begin{equation}\lb{5.14}
\big(H_0 - zI_{\cH} \big)^{-1/2}\in \cB(\cH_{-1}(H_0),\cH)
\end{equation}
and
\begin{equation}\lb{5.15z}
\big\|\big(H_0 - zI_{\cH} \big)^{-1/2} \big\|_{\cB(\cH_{-1}(H_0),\cH)} \leq M(\ol{z}),\quad z\in \bbC\backslash [0,\infty).
\end{equation}
\end{proposition}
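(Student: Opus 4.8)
The statement collects into one place the construction carried out in the paragraphs immediately preceding it, so my plan is to organize those observations; abstractly, $\big(H_0-zI_{\cH}\big)^{-1/2}$ on $\cH_{-1}(H_0)$ is nothing but the Banach space adjoint of $\big(H_0-\ol{z}I_{\cH}\big)^{-1/2}\in\cB(\cH,\cH_{+1}(H_0))$ (Proposition \ref{p5.1}) followed by the identifications $[\cH_{+1}(H_0)]^{*}=\cH_{-1}(H_0)$ and $\cH^{*}\cong\cH$, and I would verify the three assertions in that light.

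For the boundedness \eqref{5.14} and the norm bound \eqref{5.15z}: for $z\in\rho(H_0)$ one also has $\ol{z}\in\rho(H_0)$ (since $\sigma(H_0)\subseteq[0,\infty)$ is invariant under complex conjugation), so $M(\ol{z})<\infty$ and, by Proposition \ref{p5.1}, $\big\|\big(H_0-\ol{z}I_{\cH}\big)^{-1/2}g\big\|_{\cH_{+1}(H_0)}\le M(\ol{z})\|g\|_{\cH}$ for $g\in\cH$. Given $\Psi\in\cH_{-1}(H_0)$, the functional $\wti\Psi$ is conjugate linear on $\cH$, and combining the preceding bound with $\big|{}_{\cH_{+1}(H_0)}\langle\dott,\Psi\rangle_{\cH_{-1}(H_0)}\big|\le\|\dott\|_{\cH_{+1}(H_0)}\|\Psi\|_{\cH_{-1}(H_0)}$ gives \eqref{5.10}; hence $\wti\Psi\in\cH^{*}$ with $\|\wti\Psi\|_{\cH^{*}}\le M(\ol{z})\|\Psi\|_{\cH_{-1}(H_0)}$. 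The Riesz representation theorem then produces $f_{\Psi}\in\cH$ with $\|f_{\Psi}\|_{\cH}=\|\wti\Psi\|_{\cH^{*}}$; since $\Psi\mapsto\wti\Psi$ is linear and the Riesz correspondence for conjugate linear functionals is linear, $\Psi\mapsto f_{\Psi}=:\big(H_0-zI_{\cH}\big)^{-1/2}\Psi$ is linear and bounded from $\cH_{-1}(H_0)$ into $\cH$ with $\big\|\big(H_0-zI_{\cH}\big)^{-1/2}\big\|_{\cB(\cH_{-1}(H_0),\cH)}\le M(\ol{z})$. This is \eqref{5.14}, and specializing to $z\in\bbC\backslash[0,\infty)$ yields \eqref{5.15z}.

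For the extension property, let $f\in\cH$ and identify it with $\Psi_{f}\in\cH_{-1}(H_0)$ via \eqref{5.3}. For $g\in\cH$ one has $\big(H_0-\ol{z}I_{\cH}\big)^{-1/2}g\in\cH_{+1}(H_0)$, so the compatibility relation between the pairing ${}_{\cH_{+1}(H_0)}\langle\dott,\dott\rangle_{\cH_{-1}(H_0)}$ and $(\dott,\dott)_{\cH}$ gives
\[
\wti{\Psi_{f}}(g)={}_{\cH_{+1}(H_0)}\big\langle\big(H_0-\ol{z}I_{\cH}\big)^{-1/2}g,\Psi_{f}\big\rangle_{\cH_{-1}(H_0)}=\big(\big(H_0-\ol{z}I_{\cH}\big)^{-1/2}g,f\big)_{\cH}.
\]
By the spectral theorem and the functional calculus definition \eqref{2.2e} of the square roots, $\big(H_0-\ol{z}I_{\cH}\big)^{-1/2}$ and $\big(H_0-zI_{\cH}\big)^{-1/2}$ are adjoints of one another as operators in $\cB(\cH)$, so the last expression equals $\big(g,\big(H_0-zI_{\cH}\big)^{-1/2}f\big)_{\cH}$ for all $g\in\cH$. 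Uniqueness in the Riesz representation theorem then forces $f_{\Psi_{f}}=\big(H_0-zI_{\cH}\big)^{-1/2}f$, i.e.\ the map \eqref{5.12} applied to $\Psi_{f}$ reproduces the original $\big(H_0-zI_{\cH}\big)^{-1/2}f$, which is the asserted extension property.

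I do not expect a genuine obstacle here: the argument is routine once the bookkeeping is set up. The points that require care are keeping the two meanings of the symbol $\big(H_0-zI_{\cH}\big)^{-1/2}$ (the operator in $\cB(\cH)$ versus its extension to $\cH_{-1}(H_0)$) rigorously apart, checking that $\ol{z}\in\rho(H_0)$ so that $M(\ol{z})$ is finite, and invoking the functional calculus identity $\big[\big(H_0-\ol{z}I_{\cH}\big)^{-1/2}\big]^{*}=\big(H_0-zI_{\cH}\big)^{-1/2}$ consistently with the fixed branch of the square root, so that complex conjugation of $(\lambda-\ol{z})^{-1/2}$ returns $(\lambda-z)^{-1/2}$ for $\lambda\in\sigma(H_0)$.
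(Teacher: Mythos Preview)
Your proof is correct and follows exactly the route the paper takes: the paper's own proof simply refers back to the construction in the preceding paragraphs (the inequality \eqref{5.14z} for boundedness and the norm bound, and the remark ``one verifies that \dots\ $\big(H_0 - zI_{\cH}\big)^{-1/2}\Psi_f = \big(H_0 - zI_{\cH}\big)^{-1/2}f$'' for the extension property), and you have written out precisely those verifications, including the Riesz-representation step and the adjoint computation the paper leaves implicit. Your closing caveat about the branch of the square root and the identity $\big[(H_0-\ol{z}I_{\cH})^{-1/2}\big]^{*}=(H_0-zI_{\cH})^{-1/2}$ is well placed; it is the only nontrivial consistency check in the argument, and the paper does not spell it out either.
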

\begin{proof}
That $\big(H_0 - zI_{\cH} \big)^{-1/2}:\cH_{-1}(H_0) \to \cH$ defined by \eqref{5.12} is an extension of $\big(H_0 - zI_{\cH} \big)^{-1/2}\in \cB(\cH)$ satisfying \eqref{5.14} has already been shown.  The norm bound in \eqref{5.15z} immediately follows from \eqref{5.14z}.
\end{proof}

Propositions \ref{p5.1} and \ref{p5.2} yield the following improvement over the inclusion \eqref{5.7} in the form
\begin{equation}\lb{2.22}
\cH_{+1}(H_0) \hookrightarrow \cH = \cH^* \hookrightarrow \cH_{-1}(H_0) = [\cH_{+1}(H_0)]^*,
\end{equation}
where $\hookrightarrow$ abbreviates dense and continuous embedding between Hilbert (resp., Banach) spaces. Reflexivity of $\cH_{\pm 1}(H_0)$ also implies that 
\begin{equation}
 \cH_{+1}(H_0) = [\cH_{-1}(H_0)]^*.
\end{equation}

\begin{hypothesis}\lb{h5.4}
Let $\cH$ be a separable $($complex\,$)$ Hilbert space and $H_0\geq 0$ a self-adjoint operator in $\cH$.  Introduce the scale of spaces $\cH_{+1}(H_0) \subset \cH \subset\cH_{-1}(H_0)$ as in \eqref{5.7}.  Let $V\in \cB(\cH_{+1}(H_0),\cH_{-1}(H_0))$ and assume that
\begin{equation}\lb{5.22}
A_V(z):=-\big(H_0-zI_{\cH} \big)^{-1/2}V\big(H_0-zI_{\cH} \big)^{-1/2}\in \cB_{\infty}(\cH)
\end{equation}
for some $($hence, all\,$)$ $z\in \rho(H_0)$ and that
\begin{equation}\lb{5.23}
\big\|A_V(E_0)\big\|_{\cB(\cH)}<1
\end{equation}
for some $E_0\in (-\infty,0)$.
\end{hypothesis}

\begin{remark}
$(i)$  Assuming $A_V(z_0)\in \cB_{\infty}(\cH)$ for some $z_0\in \rho(H_0)$, one infers that for any other $z\in \rho(H_0)$,
\begin{align}
A_V(z)&= \overline{\big(H_0-zI_{\cH}\big)^{-1/2}\big(H_0-z_0I_{\cH}\big)^{1/2}}A_V(z_0)\no\\
&\quad\times\big(H_0-z_0I_{\cH}\big)^{1/2}\big(H_0-zI_{\cH}\big)^{-1/2}\no\\
&= \big(H_0-z_0I_{\cH}\big)^{1/2}\big(H_0-zI_{\cH}\big)^{-1/2}A_V(z_0)\lb{2.28u}\\
&\quad\times\big(H_0-z_0I_{\cH}\big)^{1/2}\big(H_0-zI_{\cH}\big)^{-1/2}\in\cB_{\infty}(\cH)\no
\end{align}
since
\begin{align}
&\big(H_0-z_0I_{\cH}\big)^{1/2}\big(H_0-zI_{\cH}\big)^{-1/2}\in \cB(\cH),\no\\
&\big(H_0-z_0I_{\cH}\big)^{1/2}\big(H_0-zI_{\cH}\big)^{-1/2}\in \cB(\cH),\\
&\big(H_0-z_0I_{\cH}\big)^{-1/2}V\big(H_0-z_0I_{\cH}\big)^{-1/2}\in \cB_{\infty}(\cH).\no
\end{align}
Thus, \eqref{5.22} extends from one to all $z\in \rho(H_0)$.\\[1mm]
$(ii)$ The assumption in Hypothesis \ref{h5.4} that $V\in \cB(\cH_{+1}(H_0),\cH_{-1}(H_0))$, combined with Propositions \ref{p5.1} and \ref{p5.2}, {\it a priori} yields $A_V(z)\in \cB(\cH)$ for $z\in \rho(H_0)$.  Therefore, \eqref{5.22} assumes the stronger condition that $A_V(z)$ is compact for $z\in \rho(H_0)$.~\hfill$\diamond$
\end{remark}

We recall the analytic Fredholm theorem in the following form (see, e.g., \cite[Theorem A.27]{Si71}):
\begin{theorem}[The Analytic Fredholm Theorem]\lb{t5.7}
Let $\cH$ be a separable $($complex\,$)$ Hilbert space and $\Omega$ an open connected subset of $\bbC$.  If $A(\dott):\Omega\to \cB_{\infty}(\cH)$ is analytic on $\Omega$, then the following alternative holds: Either\\[1mm]
$(i)$ $[I_{\cH}-A(z)]^{-1}$ does not exist for all $z\in \Omega$,\\[1mm]
or,\\[1mm]
$(ii)$ There is a discrete set $\cD\subset \Omega$ such that $[I_{\cH}-A(\dott)]^{-1}$ exists and is analytic on $\Omega\backslash \cD$ with poles at the discrete points of $\cD$.  If $z\in \cD$, then there exists $\phi\in \cH\backslash\{0\}$ such that $A(z)\phi=\phi$.
\end{theorem}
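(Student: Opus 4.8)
The plan is to reduce the statement, locally, to a condition on the zeros of a single scalar analytic function (a determinant), and then glue the local pictures together using connectedness of $\Omega$; this is the classical argument behind the analytic Fredholm theorem. First I would fix an arbitrary $z_0\in\Omega$ and use compactness of $A(z_0)$ to pick a finite-rank operator $F=\sum_{j=1}^{N}(\,\cdot\,,\psi_j)_{\cH}\,\phi_j$ with $\|A(z_0)-F\|_{\cB(\cH)}<1/2$. Since analyticity of $A(\,\cdot\,)$ forces continuity in the $\cB(\cH)$-norm, there is a closed disk $D_0=\overline{D(z_0,r)}\subset\Omega$ on which $\|A(z)-F\|_{\cB(\cH)}<1$, so that $B(z):=[I_{\cH}-(A(z)-F)]^{-1}\in\cB(\cH)$ exists and depends analytically on $z\in D_0$ by a Neumann series. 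Factoring $I_{\cH}-A(z)=[I_{\cH}-(A(z)-F)]\,[I_{\cH}-B(z)F]$ then reduces invertibility of $I_{\cH}-A(z)$ to that of $I_{\cH}-B(z)F$, where $B(z)F=\sum_{j=1}^{N}(\,\cdot\,,\psi_j)_{\cH}\,B(z)\phi_j$ is finite rank with analytic dependence on $z\in D_0$.

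Next I would run the routine finite-dimensional bookkeeping: if $(I_{\cH}-B(z)F)u=v$, pairing with $\psi_k$ shows that the scalars $(u,\psi_k)_{\cH}$ solve an $N\times N$ linear system with coefficient matrix $I_N-M(z)$, where $M_{kj}(z):=(B(z)\phi_j,\psi_k)_{\cH}$ is analytic on $D_0$. Consequently $[I_{\cH}-A(z)]^{-1}$ exists if and only if the scalar analytic function $d_{z_0}(z):=\det(I_N-M(z))$ is nonzero, and when it is, Cramer's rule exhibits $[I_{\cH}-B(z)F]^{-1}$, hence $[I_{\cH}-A(z)]^{-1}$, as a $\cB(\cH)$-valued analytic function on $D_0$ off the zero set of $d_{z_0}$, with poles there. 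By the identity theorem, on the connected set $D_0$ either $d_{z_0}\equiv 0$ or its zeros are isolated.

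The hard part will be making these local alternatives globally coherent. Call $z_0$ \emph{exceptional} if $d_{z_0}\equiv 0$ on its disk $D_0$ (equivalently, $[I_{\cH}-A(z)]^{-1}$ exists for no $z\in D_0$) and \emph{regular} otherwise. The regular points clearly form an open set; the exceptional points do as well, for if $z_0$ is exceptional and $z_1\in D_0$, then $[I_{\cH}-A(\,\cdot\,)]^{-1}$ fails to exist on the nonempty open set $D_0\cap D_{z_1}$, which by the identity theorem forces $d_{z_1}\equiv 0$ on $D_{z_1}$, so $z_1$ is exceptional too. Since $\Omega$ is connected, either every point is exceptional, which is precisely alternative $(i)$, or every point is regular, in which case the locally isolated exceptional zero sets assemble into a discrete subset $\cD\subset\Omega$, and the local Cramer expressions agree on overlaps by uniqueness of analytic continuation, yielding a $\cB(\cH)$-valued function $[I_{\cH}-A(\,\cdot\,)]^{-1}$ that is analytic on $\Omega\setminus\cD$ with poles at the points of $\cD$. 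Finally, for $z\in\cD$ the operator $I_{\cH}-A(z)$ is a non-invertible compact perturbation of $I_{\cH}$, so the Riesz--Schauder/Fredholm alternative for compact operators gives $\ker(I_{\cH}-A(z))\neq\{0\}$, i.e., $A(z)\phi=\phi$ for some $\phi\in\cH\setminus\{0\}$. Apart from this globalization argument, every ingredient --- the finite-rank approximation, the determinant identity, and the Fredholm alternative --- is standard.
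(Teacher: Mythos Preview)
The paper does not prove this theorem; it merely states it as a well-known result with a reference to \cite[Theorem A.27]{Si71}. So there is no ``paper's own proof'' to compare against.

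Your argument is correct and is essentially the classical proof (as in, e.g., Reed--Simon, Vol.~I, Theorem~VI.14): approximate $A(z_0)$ by a finite-rank operator $F$, invert the ``small'' part $I_{\cH}-(A(z)-F)$ by a Neumann series on a disk, reduce to the finite-rank piece $I_{\cH}-B(z)F$, and translate invertibility into the nonvanishing of the scalar analytic determinant $d_{z_0}(z)=\det(I_N-M(z))$. The open--open decomposition into exceptional and regular points, together with connectedness of $\Omega$, is exactly the right way to globalize. One small cosmetic point: it is cleaner to take $D_0$ as an \emph{open} disk so that the identity theorem and the ``isolated zeros'' statement apply directly; then discreteness of $\cD$ in $\Omega$ follows because each point of $\Omega$ has such a disk in which the zero set of the local determinant is isolated. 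Your invocation of the Fredholm alternative at points of $\cD$ to produce a nonzero eigenvector is also correct.
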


\begin{remark}
In the situation when $(ii)$ holds in Theorem \ref{t5.7}, the Fredholm alternative implies $[I-A(z)]^{-1}\in \cB(\cH)$, $z\in \rho(H_0)\backslash \cD$.  Therefore, in this case,
\begin{equation}
\{z\in \Omega\,|\, 1\in \rho(A(z))\} = \Omega\backslash \cD.
\end{equation}
\hfill $\diamond$
\end{remark}

\begin{proposition}\lb{p5.6}
Assume Hypothesis \ref{h5.4}.  Then the following statements $(i)$ and $(ii)$ hold:\\[1mm]
$(i)$ The map $A_V(\dott):\bbC\backslash[0,\infty)\to \cB_{\infty}(\cH)$ is analytic $($with respect to $\|\dott\|_{\cB(\cH)}$$)$ and is continuous in $\bbC_+$ up to points in $[0,\infty)\cap \rho(H_0)$.\\[1mm]
$(ii)$  There is a discrete set $\cD_{H_0,V} \subset \rho(H_0)$ such that $[I_{\cH}-A_V(\dott)]^{-1}$ exists and is analytic on $\rho(H_0)\backslash \cD_{H_0,V}$ with poles at the discrete points of $\cD_{H_0,V}$.  If $z\in \cD_{H_0,V}$, then there exists $g\in \cH\backslash\{0\}$ such that $A_V(z)g=g$.\\[1mm]
\end{proposition}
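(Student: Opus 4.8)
The plan is to establish $(i)$ first and then read off $(ii)$ from the analytic Fredholm theorem, so the substance lies in $(i)$. Since $V\in\cB(\cH_{+1}(H_0),\cH_{-1}(H_0))$ is a \emph{fixed} bounded operator, Propositions \ref{p5.1} and \ref{p5.2} reduce the norm-analyticity (resp.\ norm-continuity) of $A_V(\dott)=-(H_0-zI_{\cH})^{-1/2}V(H_0-zI_{\cH})^{-1/2}$ to that of the two square-root factors $z\mapsto(H_0-zI_{\cH})^{-1/2}$, regarded as $\cB(\cH,\cH_{+1}(H_0))$- and $\cB(\cH_{-1}(H_0),\cH)$-valued maps; by the definitions \eqref{5.1}, \eqref{2.11} of the norms on $\cH_{\pm1}(H_0)$ this reduces further to the $\cB(\cH)$-analyticity (resp.\ continuity) of $(H_0-zI_{\cH})^{-1/2}$, of $H_0^{1/2}(H_0-zI_{\cH})^{-1/2}$, and of $(H_0+I_{\cH})^{1/2}(H_0-zI_{\cH})^{-1/2}$.

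For $z_0\in\bbC\backslash\bbR$ put $d:=|\Im(z_0)|/2>0$; then for $|z-z_0|<2d$ and every $\lambda\in\sigma(H_0)\subset[0,\infty)$ one has $|\Im(\lambda-z)|=|\Im(z)|\ge d$, so $\lambda-z$ stays in the domain of analyticity of the chosen branch of $(\dott)^{-1/2}$, uniformly in $\lambda$. Expanding $(\lambda-z)^{-1/2}$ (and likewise $\lambda^{1/2}(\lambda-z)^{-1/2}$, $(\lambda+1)^{1/2}(\lambda-z)^{-1/2}$) in powers of $z-z_0$, the coefficients are bounded uniformly in $\lambda\in[0,\infty)$ and decay geometrically, and the spectral theorem turns these into convergent operator power series, giving $\cB(\cH)$-analyticity near $z_0$. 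Hence $A_V(\dott)$ is analytic on $\bbC_+\cup\bbC_-$. The one subtle point is analyticity across $(-\infty,0)$, where the individual factors $(H_0-zI_{\cH})^{-1/2}$ fail to be analytic because the cut of $\zeta\mapsto\zeta^{-1/2}$ is met (for $z_0<0$ every $\lambda\in\sigma(H_0)$ satisfies $\lambda-z_0>0$). Here I would take a small disk $D$ about $z_0\in(-\infty,0)$ with $\ol D\subset(\bbC\backslash[0,\infty))\cap\rho(H_0)$, note that $A_V(\dott)$ is analytic on $D\cap(\bbC_+\cup\bbC_-)$ and uniformly bounded on $D$ (by the estimates behind Propositions \ref{p5.1}, \ref{p5.2}), and observe that it extends \emph{continuously} across $D\cap(-\infty,0)$: the one-sided boundary values of $(H_0-zI_{\cH})^{-1/2}$ at a point $z_1\in(-\infty,0)$ differ from the branch value $(H_0-z_1I_{\cH})^{-1/2}$ only by an overall sign $\mp1$ (from $\bbC_\pm$), and these signs \emph{cancel} in the symmetric product $-(H_0-zI_{\cH})^{-1/2}V(H_0-zI_{\cH})^{-1/2}$, so both one-sided limits equal $A_V(z_1)$. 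A $\cB(\cH)$-valued function continuous on $D$ and analytic off the segment $D\cap(-\infty,0)$ is analytic on $D$ (the scalar removable-segment theorem applied to $z\mapsto\ell(A_V(z))$ for $\ell\in\cB(\cH)^*$). Thus $A_V(\dott)$ is analytic on $\bbC\backslash[0,\infty)$; since $A_V(z)\in\cB_{\infty}(\cH)$ there by Hypothesis \ref{h5.4} and \eqref{2.28u}, and $\cB_{\infty}(\cH)$ is a closed subspace of $\cB(\cH)$, this analyticity is in fact $\cB_{\infty}(\cH)$-valued. The asserted continuity up to $z_0\in[0,\infty)\cap\rho(H_0)$ from $\bbC_+$ follows from the same estimates, now using $\dist(z_0,\sigma(H_0))>0$ together with the continuity of the branch of $(\dott)^{1/2}$ in $\bbC_+$ up to the cut guaranteed by the Convention.

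For $(ii)$, apply the analytic Fredholm theorem (Theorem \ref{t5.7}) to $A_V(\dott)$ on the open connected set $\Omega:=\bbC\backslash[0,\infty)$, where it is $\cB_{\infty}(\cH)$-analytic by $(i)$. Alternative $(i)$ of Theorem \ref{t5.7}---that $[I_{\cH}-A_V(z)]^{-1}$ fails to exist for every $z\in\Omega$---is excluded by Hypothesis \ref{h5.4}: at $z=E_0\in(-\infty,0)\subset\Omega$ one has $\|A_V(E_0)\|_{\cB(\cH)}<1$, so $[I_{\cH}-A_V(E_0)]^{-1}=\sum_{n\ge0}A_V(E_0)^n\in\cB(\cH)$. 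Hence alternative $(ii)$ holds and yields a discrete set $\cD_{H_0,V}\subset\bbC\backslash[0,\infty)$ on which $[I_{\cH}-A_V(\dott)]^{-1}$ has poles, is analytic on the complement, and for $z\in\cD_{H_0,V}$ there is $g\in\cH\backslash\{0\}$ with $A_V(z)g=g$; one always has $\bbC\backslash[0,\infty)\subseteq\rho(H_0)$, with equality (so $\rho(H_0)=\bbC\backslash[0,\infty)$ exactly) whenever $\sigma(H_0)=[0,\infty)$, as in the one-dimensional Schr\"odinger application. I expect the only genuine obstacle in the argument to be the analyticity of $A_V(\dott)$ across $(-\infty,0)$---i.e.\ recognizing the cancellation of the two branch ambiguities---while the rest is routine functional calculus, the closedness of $\cB_{\infty}(\cH)$ in $\cB(\cH)$, and a direct citation of Theorem \ref{t5.7}.
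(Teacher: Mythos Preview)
Your argument is correct in substance and, on the one genuinely subtle point, is actually more explicit than the paper's own proof.

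The paper proves $(i)$ in one stroke via the factorization \eqref{2.31u},
\[
A_V(z) = \big(H_0-E_0I_{\cH}\big)^{1/2}\big(H_0-zI_{\cH}\big)^{-1/2}A_V(E_0)\big(H_0-E_0I_{\cH}\big)^{1/2}\big(H_0-zI_{\cH}\big)^{-1/2},
\]
and asserts that the $z$-dependent factors $\big(H_0-E_0I_{\cH}\big)^{1/2}\big(H_0-zI_{\cH}\big)^{-1/2}$ are analytic on $\bbC\backslash[0,\infty)$ and continuous from $\bbC_+$ up to $[0,\infty)\cap\rho(H_0)$. Taken literally with the stated branch convention (cut along $[0,\infty)$), each such factor inherits the same sign jump across $(-\infty,0)$ that you identify for $(H_0-zI_{\cH})^{-1/2}$ itself: for $z_0<0$ and $\lambda\in\sigma(H_0)\subset[0,\infty)$ the point $\lambda-z_0$ lies on the cut. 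What actually makes $A_V(\dott)$ analytic there is precisely that the \emph{two} square-root factors carry the same scalar sign, which cancels in the symmetric product---exactly the mechanism you spell out with your continuous-extension/removable-segment argument. So your route is longer but is the honest version of the same idea; the paper's factorization buys brevity at the cost of glossing over this branch-cut bookkeeping.

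One small slip: with $d=|\Im(z_0)|/2$, the condition $|z-z_0|<2d$ does \emph{not} force $|\Im(z)|\ge d$ (take $z_0=i$, $z=i/10$); use $|z-z_0|<d$ instead. This is cosmetic---all you really need is that $z$ stays in the same open half-plane, so that $\lambda-z$ avoids the cut uniformly in $\lambda$ and your power-series/spectral-theorem argument goes through.

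For $(ii)$ you and the paper proceed identically: rule out alternative $(i)$ of Theorem~\ref{t5.7} using $\|A_V(E_0)\|_{\cB(\cH)}<1$ at $E_0\in(-\infty,0)$, then read off the discrete exceptional set. Your choice $\Omega=\bbC\backslash[0,\infty)$ is what $(i)$ actually supplies; the paper phrases the conclusion on $\rho(H_0)$, but since $(i)$ only gives continuity (not analyticity) from $\bbC_+$ up to $[0,\infty)\cap\rho(H_0)$, both formulations carry the same mild looseness about any extra points of $\rho(H_0)$ lying in $[0,\infty)$.
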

\begin{proof}
To establish the analyticity and continuity claims in $(i)$, one applies \eqref{2.28u} with $z_0=E_0$:
\begin{align}
A_V(z)&= \big(H_0-E_0I_{\cH}\big)^{1/2}\big(H_0-zI_{\cH}\big)^{-1/2}A_V(E_0)\lb{2.31u}\\
&\quad\times\big(H_0-E_0I_{\cH}\big)^{1/2}\big(H_0-zI_{\cH}\big)^{-1/2}\in\cB_{\infty}(\cH),\quad z\in \bbC\backslash[0,\infty).\no
\end{align}
Since the $z$-dependent factors on the right-hand side in \eqref{2.31u} are analytic functions of $z\in \bbC\backslash[0,\infty)$ and are continuous in $\bbC_+$ up to points in $[0,\infty)\cap \rho(H_0)$, the analyticity and continuity claims in $(i)$ follow.

By \eqref{5.23}, $[I_{\cH}-A_V(E_0)]^{-1}$ exists (and is, in fact, given by a norm-convergent geometric series in $\cB(\cH)$).  Thus, the assertions in $(ii)$ immediately follow by applying the analytic Fredholm theorem (i.e., Theorem \ref{t5.7}) to the map $A_V(\dott):\rho(H_0)\to \cB_{\infty}(\cH)$.
\end{proof}

Taking inspiration from Tiktopoulos's formula (see, e.g., \cite[Section II.3]{Si71}), one introduces
\begin{equation}\lb{5.26}
\begin{split}
R(z) := \big(H_0 - zI_{\cH} \big)^{-1/2}[I_{\cH}-A_V(z)]^{-1}\big(H_0 - zI_{\cH} \big)^{-1/2},&\\
z\in \{\zeta\in \rho(H_0)\,|\, 1\in \rho(A_V(\zeta))\}=\rho(H_0)\backslash\cD_{H_0,V}.&
\end{split}
\end{equation}
The conditions on $z$ in \eqref{5.26} ensure that $R(z)\in \cB(\cH)$.  In light of the elementary identity,
\begin{equation}\lb{5.29aa}
[I_{\cH} - A_V(z)]^{-1} = I_{\cH} + A_V(z)[I_{\cH} - A_V(z)]^{-1},\quad z\in \rho(H_0)\backslash\cD_{H_0,V},
\end{equation}
the operator $R(z)$ defined in \eqref{5.26} may be recast in the form
\begin{align}
R(z)&= (H_0-zI_{\cH})^{-1}\no\\
&\quad + (H_0-zI_{\cH})^{-1/2}A_V(z)[I_{\cH} - A_V(z)]^{-1}(H_0-zI_{\cH})^{-1/2},\lb{5.29a}\\
&\hspace*{6.25cm} z\in \rho(H_0)\backslash\cD_{H_0,V}.\no
\end{align}

\begin{theorem}\lb{t5.9}
Assume Hypothesis \ref{h5.4} and suppose that $z\in \rho(H_0)\backslash \cD_{H_0,V}$.  Then $R(z)$ defined by \eqref{5.26} uniquely defines a densely defined, closed, linear operator $H$ in $\cH$ by
\begin{equation}\lb{5.28}
R(z) = (H-zI_{\cH})^{-1}.
\end{equation}
Furthermore, $[\rho(H_0)\backslash\cD_{H_0,V}]\subseteq \rho(H)$ and $H$ has the property:
\begin{equation}\lb{5.29}
\big[(H-zI_{\cH})^{-1} - (H_0-zI_{\cH})^{-1}\big] \in \cB_{\infty}(\cH),\quad z\in \rho(H_0)\cap\rho(H).
\end{equation}
\end{theorem}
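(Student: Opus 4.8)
The plan is to show that the family $\{R(z)\}_{z\in\rho(H_0)\backslash\cD_{H_0,V}}$ satisfies the first resolvent identity, so that it is automatically the resolvent family of a single closed operator $H$ with $\rho(H)\supseteq\rho(H_0)\backslash\cD_{H_0,V}$. Concretely, one fixes $z_1,z_2\in\rho(H_0)\backslash\cD_{H_0,V}$ and verifies
\begin{equation}\lb{planeq1}
R(z_1)-R(z_2)=(z_1-z_2)R(z_1)R(z_2),
\end{equation}
together with $\ran(R(z))$ being dense and $R(z)$ having trivial kernel (so that $R(z)^{-1}$ exists as a closed operator and is independent of $z$ up to the additive shift). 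Once \eqref{planeq1} is in hand, the standard abstract fact (see, e.g., the discussion of pseudoresolvents in \cite[Section~VIII.1]{RS1} or Kato) gives a unique closed operator $H$ with $(H-zI_{\cH})^{-1}=R(z)$ provided $R(z)$ is injective for one (hence all) $z$; injectivity of $R(z)$ follows because $(H_0-zI_{\cH})^{-1/2}$ is injective on $\cH$ and $[I_{\cH}-A_V(z)]^{-1}$ is a bijection of $\cH$. Density of $\dom(H)=\ran(R(z))$ then needs a short separate argument: since $(H_0-zI_{\cH})^{-1/2}\in\cB(\cH_{-1}(H_0),\cH)$ has dense range (it maps onto $\cH_{+1}(H_0)$, which is dense in $\cH$) and $[I_{\cH}-A_V(z)]^{-1}$ is bijective, $\ran(R(z))\supseteq\ran\big((H_0-zI_{\cH})^{-1/2}\big|_{\cH}\big)=\cH_{+1}(H_0)$, which is dense.

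The main computational step is \eqref{planeq1}. Here the cleanest route is to use the recast form \eqref{5.29a}: writing $G(z):=(H_0-zI_{\cH})^{-1/2}$ (on $\cH$) and $B(z):=A_V(z)[I_{\cH}-A_V(z)]^{-1}$, one has $R(z)=G(z)^2+G(z)B(z)G(z)$. The first summand $G(z)^2=(H_0-zI_{\cH})^{-1}$ already obeys the resolvent identity, so the task reduces to controlling the cross terms. One expands $(z_1-z_2)R(z_1)R(z_2)$ and uses the three elementary resolvent-type relations: (a) $G(z_1)^2-G(z_2)^2=(z_1-z_2)G(z_1)^2G(z_2)^2$; (b) $G(z_1)^2-G(z_1)G(z_2)=(z_1-z_2)G(z_1)^2\,C(z_1,z_2)G(z_2)$ type identities, where $C(z_1,z_2):=G(z_1)^{-1}G(z_2)$ extended appropriately — more precisely one uses that $G(z)$, $G(z)^2$, and the operators $\big(H_0-z_1I_{\cH}\big)^{1/2}\big(H_0-z_2I_{\cH}\big)^{-1/2}$ commute among themselves (they are all functions of $H_0$) and the algebraic identity relating $A_V(z_1)$ and $A_V(z_2)$ coming from \eqref{2.28u}. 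Substituting \eqref{2.28u} to replace $A_V(z_1)$ by a conjugate of $A_V(z_2)$ lets every operator be pushed into a common functional-calculus framework in $H_0$, after which \eqref{planeq1} becomes a purely algebraic identity in the commutative algebra generated by the resolvents of $H_0$ together with the single non-commuting block $A_V(z_2)$. This bookkeeping is the part most likely to be error-prone, but it is routine: the structural reason it works is exactly that $R(z)$ is, by construction, the Tiktopoulos resolvent of ``$H_0+V$.''

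Having established that $R(z)=(H-zI_{\cH})^{-1}$ for a densely defined closed $H$ with $\rho(H)\supseteq\rho(H_0)\backslash\cD_{H_0,V}$, the last claim \eqref{5.29} is immediate from \eqref{5.29a}: for $z\in\rho(H_0)\cap\rho(H)$,
\begin{equation}\lb{planeq2}
(H-zI_{\cH})^{-1}-(H_0-zI_{\cH})^{-1}=(H_0-zI_{\cH})^{-1/2}A_V(z)[I_{\cH}-A_V(z)]^{-1}(H_0-zI_{\cH})^{-1/2},
\end{equation}
and the right-hand side lies in $\cB_{\infty}(\cH)$ because $A_V(z)\in\cB_{\infty}(\cH)$ by Hypothesis \ref{h5.4} (extended to all $z\in\rho(H_0)$ via \eqref{2.28u}), $[I_{\cH}-A_V(z)]^{-1}\in\cB(\cH)$ for $z\notin\cD_{H_0,V}$ by Proposition \ref{p5.6}(ii), and $(H_0-zI_{\cH})^{-1/2}\in\cB(\cH)$ by Proposition \ref{p5.1}; compactness is then preserved by the ideal property of $\cB_{\infty}(\cH)$ under left and right multiplication by bounded operators. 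One should also record that the operator $H$ does not depend on the choice of $z$ used in \eqref{5.28} — this is automatic from the pseudoresolvent argument, since \eqref{planeq1} forces $R(z_1)^{-1}+z_1I_{\cH}=R(z_2)^{-1}+z_2I_{\cH}$ on the common domain. The one genuine subtlety worth flagging is verifying that $R(z)$ is injective and has dense range simultaneously for \emph{all} $z\in\rho(H_0)\backslash\cD_{H_0,V}$ rather than just one; but \eqref{planeq1} propagates both properties from a single base point, so it suffices to check them at $z=E_0$, where $[I_{\cH}-A_V(E_0)]^{-1}$ is the explicit convergent Neumann series.
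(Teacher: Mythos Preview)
Your overall strategy—verify the first resolvent identity, check injectivity and dense range, then read off compactness from \eqref{5.29a}—is exactly the paper's. But your density argument contains a genuine error. You assert
\[
\ran(R(z))\supseteq \ran\big((H_0-zI_{\cH})^{-1/2}\big|_{\cH}\big)=\cH_{+1}(H_0),
\]
and this inclusion is false in general. Write $G(z)=(H_0-zI_{\cH})^{-1/2}$. For $f\in\cH_{+1}(H_0)$ to lie in $\ran(R(z))$ you would need $h\in\cH$ with $G(z)[I_{\cH}-A_V(z)]^{-1}G(z)h=f$, equivalently $G(z)h=[I_{\cH}-A_V(z)]g$ where $g=(H_0-zI_{\cH})^{1/2}f\in\cH$. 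But $[I_{\cH}-A_V(z)]g$ need not lie in $\ran(G(z)|_{\cH})=\cH_{+1}(H_0)$: the operator $A_V(z)$ maps $\cH$ to $\cH$, not to $\cH_{+1}(H_0)$. So no such $h$ need exist. In fact $\ran(R(z))=\dom(H)$ is typically a \emph{proper} subset of $\cH_{+1}(H_0)$ (cf.\ Remark \ref{r2.10}(ii)). The paper obtains density by the much cleaner route of taking adjoints: $R(z)^*=(H_0-\ol{z}I_{\cH})^{-1/2}[I_{\cH}-A_V(z)^*]^{-1}(H_0-\ol{z}I_{\cH})^{-1/2}$, and all three factors are injective, so $\ker(R(z)^*)=\{0\}$, whence $\ol{\ran(R(z))}=\cH$.

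Two smaller points. First, you wave your hands at the resolvent identity (``routine bookkeeping''); the paper actually carries it out, and the mechanism is not the conjugation \eqref{2.28u} you suggest but rather repeated use of the ordinary resolvent identity for $H_0$ to convert factors $(z_1-z_2)R_0(z_1)R_0(z_2)$ into $R_0(z_1)-R_0(z_2)$, after which everything collapses via \eqref{5.29aa}. Second, your compactness argument via \eqref{5.29a} only covers $z\in\rho(H_0)\backslash\cD_{H_0,V}$; to get all of $\rho(H_0)\cap\rho(H)$ (as stated in \eqref{5.29}) one needs the standard two-point identity
\[
(H-z)^{-1}-(H_0-z)^{-1}=(H-z_0)(H-z)^{-1}\big[(H-z_0)^{-1}-(H_0-z_0)^{-1}\big](H_0-z_0)(H_0-z)^{-1},
\]
which the paper invokes explicitly.
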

\begin{proof}
It suffices to prove that $R(z)$, $z\in \rho(H_0)\backslash\cD_{H_0,V}$, is the resolvent of a densely defined, closed, linear operator in $\cH$.  The compactness property in \eqref{5.29} follows for $z\in \rho(H_0)\backslash\cD_{H_0,V}$ from \eqref{5.29a} and compactness of $A_V(z)$.  In turn, compactness  extends to all $z\in \rho(H_0)\cap\rho(H)$ by applying the identity:
\begin{align}
&(H-zI_{\cH})^{-1}-(H_0-zI_{\cH})^{-1}\no\\
&\quad= (H-z_0I_{\cH})(H-zI_{\cH})^{-1}\big[(H-z_0I_{\cH})^{-1}-(H_0-z_0I_{\cH})^{-1} \big]\\
&\qquad\times(H_0-z_0I_{\cH})(H_0-zI_{\cH})^{-1},\quad z,z_0\in \rho(H_0)\cap\rho(H)\no
\end{align}
(see, e.g., \cite[Theorem C.4.8]{GNZ24}, \cite[p.~178]{We80} in this connection).

If $z\in \rho(H_0)\backslash\cD_{H_0,V}$, then $R(z)$ is injective since all three factors on the right-hand side in \eqref{5.26} are injective.  Furthermore, by taking adjoints in \eqref{5.26}, one obtains:
\begin{equation}
R(z)^* = (H_0 - \ol{z}I_{\cH} )^{-1/2}[I_{\cH}-A_V(z)^*]^{-1}(H_0 - \ol{z}I_{\cH} )^{-1/2},
\end{equation}
which implies $\{0\}=\ker(R(z)^*)=[\ran(R(z))]^{\perp}$.  Thus, $\ol{\ran(R(z))}=\cH$; that is, the range of $R(z)$ is dense in $\cH$.  To conclude that $R(z)$ is the resolvent of a densely defined, closed, linear operator in $\cH$, it suffices to verify that $R(\dott)$ satisfies the first resolvent identity in the form:
\begin{equation}\lb{5.34}
R(z_1)-R(z_2) = (z_1-z_2)R(z_1)R(z_2),\quad z_1,z_2\in \rho(H_0)\backslash\cD_{H_0,V}.
\end{equation}
To this end, let $z_1,z_2\in \rho(H_0)\backslash\cD_{H_0,V}$.  Employing the shorthand notation for the ``free'' resolvent, $R_0(z)=(H_0 - zI_{\cH} )^{-1}$, $z\in \rho(H_0)$, one applies \eqref{5.29a} and the first resolvent identity for $H_0$, to obtain
\begin{align} 
\begin{split} 
&R(z_1)-R(z_2) - (z_1-z_2)R(z_1)R(z_2)\lb{5.35} \\
&\quad= -R_0(z_2)^{1/2}A_V(z_2)[I_{\cH}-A_V(z_2)]^{-1}R_0(z_2)^{1/2}   \\
&\qquad + R_0(z_1)^{1/2}A_V(z_1)[I_{\cH}-A_V(z_1)]^{-1}R_0(z_1)^{1/2}   \\
&\qquad -(z_1-z_2)R_0(z_1)R_0(z_2)^{1/2}A_V(z_2)[I_{\cH}-A_V(z_2)]^{-1}R_0(z_2)^{1/2}    \\
&\qquad -(z_1-z_2)R_0(z_1)^{1/2}A_V(z_1)[I_{\cH}-A_V(z_1)]^{-1}R_0(z_1)^{1/2}R_0(z_2)    \\
&\qquad -(z_1-z_2)R_0(z_1)^{1/2}A_V(z_1)[I_{\cH}-A_V(z_1)]^{-1}R_0(z_1)^{1/2}    \\
&\qquad\quad\times R_0(z_2)^{1/2}A_V(z_2)[I_{\cH}-A_V(z_2)]^{-1}R_0(z_2)^{1/2}. 
\end{split} 
\end{align}
An application of the first resolvent equation for $R_0(\dott)$ implies
\begin{align}
&(z_1-z_2)R_0(z_1)R_0(z_2)^{1/2}A_V(z_2)[I_{\cH}-A_V(z_2)]^{-1}R_0(z_2)^{1/2}\no\\
&\quad= -(z_1-z_2)R_0(z_1)R_0(z_2)VR_0(z_2)^{1/2}[I_{\cH}-A_V(z_2)]^{-1}R_0(z_2)^{1/2}\lb{5.36}\\
&\quad= -[R_0(z_1)-R_0(z_2)]VR_0(z_2)^{1/2}[I_{\cH}-A_V(z_2)]^{-1}R_0(z_2)^{1/2}.\no
\end{align}
Similarly, one obtains
\begin{align}
&(z_1-z_2)R_0(z_1)^{1/2}A_V(z_1)[I_{\cH}-A_V(z_1)]^{-1}R_0(z_1)^{1/2}R_0(z_2)\no\\
&\quad=(z_1-z_2)R_0(z_1)^{1/2}[I_{\cH}-A_V(z_1)]^{-1}A_V(z_1)R_0(z_1)^{1/2}R_0(z_2)\lb{5.37}\\
&\quad=-R_0(z_1)^{1/2}[I_{\cH}-A_V(z_1)]^{-1}R_0(z_1)^{1/2}V[R(z_1)-R_0(z_2)]\no
\end{align}
and
\begin{align}
&(z_1-z_2)R_0(z_1)^{1/2}A_V(z_1)[I_{\cH}-A_V(z_1)]^{-1}R_0(z_1)^{1/2}\no\\
&\qquad\times R_0(z_2)^{1/2}A_V(z_2)[I_{\cH}-A_V(z_2)]^{-1}R_0(z_2)^{1/2}\no\\
&\quad = (z_1-z_2)R_0(z_1)^{1/2}[I_{\cH}-A_V(z_1)]^{-1}A_V(z_1)R_0(z_1)^{1/2}\no\\
&\qquad\times R_0(z_2)^{1/2}A_V(z_2)[I_{\cH}-A_V(z_2)]^{-1}R_0(z_2)^{1/2}\no\\
&\quad = R_0(z_1)^{1/2}[I_{\cH}-A_V(z_1)]^{-1}R_0(z_1)^{1/2}V[R_0(z_1)-R_0(z_2)]\no\\
&\qquad\times VR_0(z_2)^{1/2}[I_{\cH}-A_V(z_2)]^{-1}R_0(z_2)^{1/2}.\lb{5.38}
\end{align}
Distributing the factors to the resolvent differences $R_0(z_1)-R_0(z_2)$ in each of \eqref{5.36}, \eqref{5.37}, and \eqref{5.38}, and using the result \eqref{5.35}, one obtains
\begin{align}
&R(z_1)-R(z_2) - (z_1-z_2)R(z_1)R(z_2)    \no \\
&\quad=-R_0(z_1)^{1/2}[I_{\cH}-A_V(z_1)]^{-1}R_0(z_1)^{1/2}VR_0(z_2)^{1/2}\no\\
&\qquad\quad \times\big\{A_V(z_2)[I_{\cH}-A_V(z_2)]^{-1} + I_{\cH} \big\}R_0(z_2)^{1/2}\no\\
&\qquad + R_0(z_1)^{1/2}\big\{I_{\cH}+[I_{\cH}-A_V(z_1)]^{-1}A_V(z_1) \big\}\no\\
&\qquad\quad\times R_0(z_1)^{1/2}VR_0(z_2)^{1/2}[I_{\cH}-A_V(z_2)]^{-1}R_0(z_2)^{1/2}\no\\
&\quad= -R_0(z_1)^{1/2}[I_{\cH}-A_V(z_1)]^{-1}R_0(z_1)^{1/2}VR_0(z_2)^{1/2}[I_{\cH}-A_V(z_2)]^{-1}R_0(z_2)^{1/2}\no\\
&\qquad + R_0(z_1)^{1/2}[I_{\cH}-A_V(z_1)]^{-1}R_0(z_1)^{1/2}VR_0(z_2)^{1/2}[I_{\cH}-A_V(z_2)]^{-1}R_0(z_2)^{1/2}\no\\
&\quad=0.     \lb{5.39}
\end{align}
The second-to-last equality in \eqref{5.39} uses \eqref{5.29aa} twice.  
\end{proof}

\begin{remark}\lb{r2.10} 
$(i)$ In connection with Theorem \ref{t5.9} we also refer to the closely related result by Neiman-zade and Shkalikov  \cite[Thm.~4]{NS99}. \\[1mm] 
$(ii)$ Choosing $z\in \rho(H_0)\backslash \cD_{H_0,V}$, \eqref{5.26} shows that
\begin{equation}
\begin{split}
\dom(H)=\ran(R(z))\subseteq \ran\big((H_0-zI_{\cH})^{-1/2}\big)&=\dom\big((H_0-zI_{\cH})^{1/2}\big)\\
&=\dom\big(H_0^{1/2}\big)=\cH_{+1}(H_0).
\end{split}
\end{equation}
In particular, if $f\in \dom(H)$, then $Vf\in \cH_{-1}(H_0)$ is well-defined.\\[1mm]
$(iii)$  In view of \eqref{5.28}, purely formal manipulations in \eqref{5.26} lead to ``$H=H_0+V$.''  Thus, \eqref{5.28} may be viewed as a rigorous definition of the ``sum'' of $H_0$ and $V$. \hfill$\diamond$
\end{remark}

The following result, and the principal abstract result of this paper, is a Birman--Schwinger principle for the operator $H$ stated in terms of the operator $A_V(\dott)$, which functions as the corresponding Birman--Schwinger operator.

\begin{theorem}\lb{t2.11c}
Assume Hypothesis \ref{h5.4} and let $\lambda_0\in \rho(H_0)$.  If $H$ is the densely defined, closed operator in $\cH$ defined by \eqref{5.28}, then
\begin{equation}\lb{2.40u}
\text{$Hf = \lambda_0f$ for some $0\neq f \in \dom(H)$ implies $A_V(\lambda_0)g=g$,}
\end{equation}
where
\begin{equation}\lb{2.41u}
0\neq g=(H_0-\lambda_0I_{\cH})^{1/2}f.
\end{equation}
Conversely,
\begin{equation}
\text{$A_V(\lambda_0)g = g$ for some $0\neq g\in \cH$ implies $Hf=\lambda_0f$},
\end{equation}
where
\begin{equation}
0\neq f=R_0(\lambda_0)^{1/2}g\in \dom(H).
\end{equation}
Moreover,
\begin{equation}\lb{2.51u}
\dim(\ker(H-\lambda_0I_{\cH})) = \dim(\ker(I_{\cH}-A_V(\lambda_0)))<\infty,
\end{equation}
that is, the geometric multiplicity of $\lambda_0$ as an eigenvalue of $H$ coincides with the geometric multiplicity of the eigenvalue $1$ of $A_V(\lambda_0)$. 
In particular, for $z\in \rho(H_0)$,
\begin{equation}
\text{$z\in \rho(H)$ if and only if\, $1\in \rho(A_V(z))$}.
\end{equation}
\end{theorem}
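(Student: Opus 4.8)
The plan is to conjugate everything to the reference point $E_0\in(-\infty,0)$ from Hypothesis \ref{h5.4}. Since $\|A_V(E_0)\|_{\cB(\cH)}<1$, the operator $[I_{\cH}-A_V(E_0)]^{-1}$ exists in $\cB(\cH)$, so $E_0\in[\rho(H_0)\backslash\cD_{H_0,V}]\subseteq\rho(H)$ and, by \eqref{5.26}, $R(E_0)=R_0(E_0)^{1/2}[I_{\cH}-A_V(E_0)]^{-1}R_0(E_0)^{1/2}$, where $R_0(z):=(H_0-zI_{\cH})^{-1}$. For $\lambda_0\in\rho(H_0)$ I would introduce the two commuting bounded functions of $H_0$,
\[
S:=(H_0-E_0I_{\cH})^{1/2}(H_0-\lambda_0I_{\cH})^{-1/2},\qquad T:=(H_0-\lambda_0I_{\cH})^{1/2}(H_0-E_0I_{\cH})^{-1/2};
\]
because $E_0,\lambda_0\in\rho(H_0)$, the underlying scalar functions are bounded on $\sigma(H_0)\subseteq[0,\infty)$, so $S,T\in\cB(\cH)$, and the spectral theorem gives $ST=TS=I_{\cH}$, the identity $S-T=(\lambda_0-E_0)(H_0-E_0I_{\cH})^{-1/2}(H_0-\lambda_0I_{\cH})^{-1/2}$, and $S(H_0-E_0I_{\cH})^{-1}=(H_0-E_0I_{\cH})^{-1/2}(H_0-\lambda_0I_{\cH})^{-1/2}$. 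The last ingredient is the conjugation identity $A_V(\lambda_0)=SA_V(E_0)S$, which is precisely \eqref{2.31u} at $z=\lambda_0$. With these in hand every computation below is elementary algebra of bounded functions of $H_0$.

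For the forward implication \eqref{2.40u}, assume $Hf=\lambda_0f$ with $0\neq f\in\dom(H)$. Then $f=(\lambda_0-E_0)R(E_0)f$, and since $f\in\cH_{+1}(H_0)=\dom\big((H_0-E_0I_{\cH})^{1/2}\big)$ by Remark \ref{r2.10}(ii), I would apply $(H_0-E_0I_{\cH})^{1/2}=R_0(E_0)^{-1/2}$ to both sides of $f=(\lambda_0-E_0)R_0(E_0)^{1/2}[I_{\cH}-A_V(E_0)]^{-1}R_0(E_0)^{1/2}f$ — permissible because the right-hand side already lies in $\ran\big(R_0(E_0)^{1/2}\big)$ — obtaining the vector $u:=(H_0-E_0I_{\cH})^{1/2}f\in\cH$ and, using $f=R_0(E_0)^{1/2}u$, the relation $u=A_V(E_0)u+(\lambda_0-E_0)R_0(E_0)u$. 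Then $g$ in \eqref{2.41u} satisfies $g=(H_0-\lambda_0I_{\cH})^{1/2}f=Tu$, which is nonzero because $T$ is invertible and $u\neq0$; I would compute $A_V(\lambda_0)g=SA_V(E_0)(STu)=SA_V(E_0)u$, insert the relation for $A_V(E_0)u$, and collapse the result with the two displayed functional-calculus identities to $A_V(\lambda_0)g=Tu=g$.

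For the converse, given $0\neq g\in\cH$ with $A_V(\lambda_0)g=g$, I would set $f:=R_0(\lambda_0)^{1/2}g=(H_0-\lambda_0I_{\cH})^{-1/2}g\in\cH_{+1}(H_0)$, which is nonzero by injectivity of $(H_0-\lambda_0I_{\cH})^{-1/2}$, and then prove $f=(\lambda_0-E_0)R(E_0)f$; this exhibits $f\in\ran(R(E_0))=\dom(H)$ and yields $(H-E_0I_{\cH})f=(\lambda_0-E_0)f$, i.e.\ $Hf=\lambda_0f$. To obtain $f=(\lambda_0-E_0)R(E_0)f$, I would read the conjugation identity as $g=SA_V(E_0)(Sg)$, conclude $A_V(E_0)v=Tg$ for $v:=Sg$ (using $TS=I_{\cH}$), so that $[I_{\cH}-A_V(E_0)]v=(S-T)g=(\lambda_0-E_0)R_0(E_0)^{1/2}f$, and finally apply $[I_{\cH}-A_V(E_0)]^{-1}$ followed by $R_0(E_0)^{1/2}$, using $R_0(E_0)^{1/2}v=(H_0-\lambda_0I_{\cH})^{-1/2}g=f$.

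The multiplicity identity \eqref{2.51u} then drops out: the maps $f\mapsto(H_0-\lambda_0I_{\cH})^{1/2}f$ and $g\mapsto(H_0-\lambda_0I_{\cH})^{-1/2}g$ are linear and mutually inverse on $\cH_{+1}(H_0)$, respectively $\cH$, so by the two implications they restrict to inverse linear bijections between $\ker(H-\lambda_0I_{\cH})$ and $\ker(I_{\cH}-A_V(\lambda_0))$; finiteness of the common dimension is immediate from compactness of $A_V(\lambda_0)$. For the final equivalence, for $z\in\rho(H_0)$ compactness of $A_V(z)$ makes ``$1\notin\rho(A_V(z))$'' equivalent to ``$\ker(I_{\cH}-A_V(z))\neq\{0\}$'', hence — by the bijection just set up — to ``$z$ is an eigenvalue of $H$''; combining this with $\cD_{H_0,V}=\{z\in\rho(H_0)\,|\,1\notin\rho(A_V(z))\}$ and $[\rho(H_0)\backslash\cD_{H_0,V}]\subseteq\rho(H)$ (from Proposition \ref{p5.6} and Theorem \ref{t5.9}) gives $z\in\rho(H)\Leftrightarrow1\in\rho(A_V(z))$. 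The step I expect to demand the most care is the domain bookkeeping in the forward implication — verifying that $(H_0-E_0I_{\cH})^{1/2}$ may be applied to both sides of the eigenvalue relation and that every product of fractional powers of $H_0-E_0I_{\cH}$ and $H_0-\lambda_0I_{\cH}$ reduces to a bounded function of $H_0$ with no loss of domain; granted \eqref{2.31u} and the factorization of $R(E_0)$, everything else is routine.
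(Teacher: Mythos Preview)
Your proof is correct and complete. The approach, however, differs from the paper's in a noteworthy way.

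The paper proceeds by analytic continuation: it first derives the operator identity
\[
[I_{\cH}-A_V(E)](H_0-EI_{\cH})^{1/2}h=R_0(E)^{1/2}(H-EI_{\cH})h,\quad h\in\dom(H),
\]
on an interval $(E_1,E_2)\subset(-\infty,0)$ where $\|A_V(E)\|<1$, observes that both sides are $\cH$-valued analytic functions of $E$ on $\bbC\setminus[0,\infty)$, and then specializes to $E=\lambda_0$, $h=f$. The converse is handled symmetrically by continuing a companion identity for $(H-EI_{\cH})R_0(E)^{1/2}h$ with $h\in\dom\big(H_0^{1/2}\big)$.

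You instead work at the single reference point $E_0$ throughout, converting the eigenvalue equation $Hf=\lambda_0 f$ into the resolvent relation $f=(\lambda_0-E_0)R(E_0)f$ and then pushing all the $\lambda_0$-dependence into the bounded spectral-calculus operators $S,T$ via the conjugation $A_V(\lambda_0)=SA_V(E_0)S$ from \eqref{2.31u}. This sidesteps analytic continuation entirely and is in that sense more elementary; the price is the explicit algebra with $S$, $T$, $S-T$, and $SR_0(E_0)$, which you handle correctly. The paper's route is shorter once the continuation machinery is in place, and it makes the role of analyticity (already used in Proposition~\ref{p5.6}) explicit; your route shows that analyticity is not actually needed for the Birman--Schwinger equivalence itself, only compactness and the single norm bound \eqref{5.23}.

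The multiplicity statement and the final resolvent equivalence are argued identically in both proofs.
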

\begin{proof}
By continuity of $\|A_V(z)\|_{\cB(\cH)}$ with respect to $z\in \rho(H_0)$, \eqref{5.23} implies the existence of $E_1<E_2<0$ such that $\|A_V(E)\|_{\cB(\cH)}<1$ for all $E\in (E_1,E_2)$.  In particular, $(E_1,E_2)\subset \rho(H_0)\backslash\cD_{H_0,V}$.

Let $\lambda_0\in \rho(H_0)$ and assume that $Hf=\lambda_0f$ for some $f\in \dom(H)\backslash\{0\}$.  By \eqref{5.26},
\begin{equation}
R(E)=R_0(E)^{1/2}[I_{\cH}-A_V(E)]^{-1}R_0(E)^{1/2},\quad E\in (E_1,E_2).
\end{equation}
Therefore, one infers (cf.~Remark \ref{r2.10}):
\begin{equation}\lb{2.47u}
[I_{\cH}-A_V(E)](H_0-EI_{\cH})^{1/2}R(E)=R_0(E)^{1/2},\quad E\in (E_1,E_2),
\end{equation}
which, upon applying both sides of \eqref{2.47u} to $(H-EI_{\cH})h$, $h\in \dom(H)$, yields:
\begin{equation}\lb{2.48u}
\begin{split}
[I_{\cH}-A_V(E)](H_0-EI_{\cH})^{1/2}h=R_0(E)^{1/2}(H-EI_{\cH})h,&\\
h\in \dom(H), \; E\in (E_1,E_2).&
\end{split}
\end{equation}
For fixed $h\in \dom(H)$, both sides of the equation in \eqref{2.48u} are $\cH$-valued analytic functions of $E\in \bbC\backslash [0,\infty)$ and are continuous in $\bbC_+$ up to points in $[0,\infty)\cap\rho(H_0)$.  Thus, for each fixed $h\in \dom(H)$, the equality in \eqref{2.48u} extends to all $E\in [\bbC\backslash[0,\infty)]\cup \rho(H_0)$.  In particular, \eqref{2.48u} with the choices $E=\lambda_0$ and $h=f$ implies
\begin{align}
[I_{\cH}-A_V(\lambda_0)](H_0-\lambda_0I_{\cH})^{1/2}f=R_0(\lambda_0)^{1/2}(H-\lambda_0I_{\cH})f=0,
\end{align}
which yields \eqref{2.40u} and \eqref{2.41u} (note that $g\neq 0$; otherwise, $f=R_0(\lambda_0)^{1/2}g=0$, contradicting the assumption on $f$).

To establish the converse, let $\lambda_0\in \rho(H_0)$ and assume that $A_V(\lambda_0)g=g$ for some $g\in \cH\backslash\{0\}$.  By \eqref{5.26},
\begin{align}\lb{2.53e}
I_{\cH}=(H-EI_{\cH})R_0(E)^{1/2}[I_{\cH}-A_V(E)]^{-1}R_0(E)^{1/2},\quad E\in (E_1,E_2).
\end{align}
Applying both sides of \eqref{2.53e} to $(H_0-EI_{\cH})^{1/2}[I_{\cH}-A_V(E)]h$, with $h\in \dom\big(H_0^{1/2}\big)$, $E\in (E_1,E_2)$, one obtains
\begin{equation}\lb{2.54e}
\begin{split}
(H_0-EI_{\cH})^{1/2}[I_{\cH}-A_V(E)]h=(H-EI_{\cH})R_0(E)^{1/2}h,&\\
E\in (E_1,E_2), \; h\in \dom\big(H_0^{1/2}\big).&
\end{split}
\end{equation}
For each fixed $h\in \dom\big(H_0^{1/2}\big)$, both sides of the equation in \eqref{2.54e} are $\cH$-valued analytic functions of $E\in \bbC\backslash [0,\infty)$ and are continuous in $\bbC_+$ up to points in $[0,\infty)\cap \rho(H_0)$.  Thus, for each fixed $h\in \dom\big(H_0^{1/2}\big)$, the equality in \eqref{2.54e} extends to all $E\in \bbC\backslash \sigma(H_0)$.  In particular, \eqref{2.54e} with the choices $E=\lambda_0$ and $h=g=A_V(\lambda_0)g=-R_0(\lambda_0)^{1/2}VR_0(\lambda_0)^{1/2}g\in \dom\big(H_0^{1/2}\big)$ implies
\begin{equation}
(H-\lambda_0I_{\cH})R_0(\lambda_0)^{1/2}g=(H_0-\lambda_0I_{\cH})^{1/2}\underbrace{[I_{\cH}-A_V(\lambda_0)]g}_{=0}=0.
\end{equation}
Thus, $(H-\lambda_0I_{\cH})f=0$ where $f=R_0(\lambda_0)^{1/2}g\neq 0$ since $g\neq 0$.

The equality in \eqref{2.51u} follows from the observation that $(H_0-\lambda_0I_{\cH})^{1/2}$ is a bijection, hence a vector space isomorphism, from $\ker(H-\lambda_0I_{\cH})$ to $\ker(I_{\cH}-A_V(\lambda_0))$.  Finally, $\dim(\ker(I_{\cH}-A_V(\lambda_0)))<\infty$ since $A_V(\lambda_0)$ is compact.
\end{proof}

\begin{remark} \lb{r2.12}
$(i)$ Theorem \ref{t5.9} implies that $\lambda_0$ in \eqref{2.40u} is necessarily an isolated point of $\sigma(H)$.\\[1mm]
\noindent
$(ii)$ One might wonder if also the algebraic multiplicity of $\lambda_0$ as an eigenvalue of $H$ coincides with the algebraic multiplicity of the eigenvalue $1$ of $A_V(\lambda_0)$. The answer is an emphatic no as the following elementary two-dimensional counterexample shows: Consider in $\cH = \bbC^2$, the operators 
\begin{align}
\begin{split} 
& H_0 = \begin{pmatrix} 1 & 0 \\ 0 & a \end{pmatrix}, \; a \in \bbR\backslash\{1\}, \quad 
V = \begin{pmatrix} 0 & 1 \\ -1 & -1 - a \end{pmatrix}, \\
& H = H_0 + V = \begin{pmatrix} 1 & 1 \\ -1 & -1 \end{pmatrix}, \quad H^2 = 0, 
\end{split} 
\end{align}
and 
\begin{align}
A_V(z) &= - (H_0 -z I_2)^{-1/2} V (H_0 -z I_2)^{-1/2}      \lb{2.59} \\
&=  \begin{pmatrix} 0 & - (a-z)^{-1/2} (1 - z)^{-1/2} \\  (a-z)^{-1/2} (1 - z)^{-1/2} & (1+a)(a-z)^{-1} \end{pmatrix}, \quad z \in \bbC \backslash \{1,a\},    \no \\
A_V(0) &=  \begin{pmatrix} 0 & - a^{-1/2} \\  a^{-1/2} & (1+a)a^{-1} \end{pmatrix}.     \lb{2.60}
\end{align}
(The precise choice of square root branch chosen in \eqref{2.59} and \eqref{2.60} is immaterial as long as the branch is consistently used.)
Then 
\begin{align}
\begin{split}
& {\det}_{\bbC^2} (H - z I_2) = z^2, \quad z \in \bbC,    \\
& {\det}_{\bbC^2} (A_V(0) - \zeta I_2) = (\zeta-1) \big(\zeta-a^{-1}\big), \quad \zeta \in \bbC,
\end{split} 
\end{align}
and hence with $m_g(T; z_0)$ and $m_a(T; z_0)$ denoting the geometric, respectively, algebraic multiplicity of a discrete eigenvalue $z_0 \in \bbC$ of the linear operator $T$ in some complex, separable Hilbert space, one obtains
\begin{equation}
m_g(A_V(0);1) = m_a(A_V(0);1) = m_g(H;0) = 1, \quad  m_a(H;0) = 2.
\end{equation}
\hfill$\diamond$
\end{remark} 

\section{Application:  One-Dimensional Schr\"odinger Operators \\ with $H^{-1}(\bbR)$ Potentials} \lb{s4}

If $V\in H^{-1}(\bbR)$, then $V$ is a Sobolev multiplier from $H^1(\bbR)$ to $H^{-1}(\bbR)$.  (For a precise definition of Sobolev multiplier, see Definition \ref{def_multiplier}.  A proof of the Sobolev multiplier property of distributions $V\in H^{-1}(\bbR)$ is provided, for completeness, in Proposition \ref{p2.7}.)  Hence,
\begin{equation}\lb{2.33}
V\in \cB\big(H^1(\bbR),H^{-1}(\bbR)\big)
\end{equation}
and \eqref{2.35a} holds.  In addition, Propositions \ref{p2.1} and \ref{p2.2} imply:
\begin{equation}\lb{2.34}
\begin{split}
&\big(H_0 + \kappa^2I_{L^2(\bbR)}\big)^{-1/2} \in \cB\big(H^{-1}(\bbR),L^2(\bbR)\big),\\
&\big(H_0 + \kappa^2I_{L^2(\bbR)}\big)^{-1/2} \in \cB\big(L^2(\bbR),H^1(\bbR)\big),\quad \kappa\in (0,\infty),
\end{split}
\end{equation}
where $H_0$ is defined according to (cf.~\eqref{3.12c})
\begin{equation}\lb{4.11c}
H_0f = -f'',\quad f\in \dom(H_0)=H^2(\bbR).
\end{equation}
In consequence, \eqref{2.33} and \eqref{2.34} combine to yield:
\begin{equation}\lb{2.41aa}
\big(H_0 + \kappa^2I_{L^2(\bbR)}\big)^{-1/2}V\big(H_0 + \kappa^2I_{L^2(\bbR)}\big)^{-1/2} \in \cB\big(L^2(\bbR)\big),\quad \kappa\in (0,\infty),
\end{equation}
with
\begin{align}
&\Big\|\big(H_0 + \kappa^2I_{L^2(\bbR)}\big)^{-1/2}V\big(H_0 + \kappa^2I_{L^2(\bbR)}\big)^{-1/2}\Big\|_{\cB(L^2(\bbR))}\lb{2.38a}\\
&\quad\leq M(\kappa)\|V\|_{\cB(H^1(\bbR),H^{-1}(\bbR))},\quad \kappa\in (0,\infty),\no
\end{align}
and
\begin{align}
M(\kappa) :&= \Big\|\big(H_0 + \kappa^2I_{L^2(\bbR)}\big)^{-1/2}\Big\|_{\cB(H^{-1}(\bbR),L^2(\bbR))}\no\\
&\quad \times\Big\|\big(H_0 + \kappa^2I_{L^2(\bbR)}\big)^{-1/2}\Big\|_{\cB(L^2(\bbR),H^1(\bbR))},\quad \kappa\in (0,\infty).\lb{2.43a}
\end{align}

As a map from $L^2(\bbR)$ to $L^2(\bbR)$, the free resolvent $\big(H_0+\kappa^2I_{L^2(\bbR)}\big)^{-1}$, $\kappa\in (0,\infty)$, is an integral operator with integral kernel given by the convolution kernel
\begin{equation}\lb{2.37}
G_0(-\kappa^2,x,y) = \frac{1}{2\kappa}e^{-\kappa|x-y|},\quad x,y\in \bbR,\quad \kappa\in (0,\infty).
\end{equation}
In addition, the square root $\big(H_0+\kappa^2I_{L^2(\bbR)}\big)^{-1/2}$ is also an integral operator with a convolution (integral) kernel given by
\begin{equation}
G_0^{1/2}(-\kappa^2,x,y) = \pi^{-1} H_0^{(1)}(i\kappa|x-y|),\quad x,y\in \bbR,\quad \kappa\in (0,\infty), 
\end{equation}
with $H_0^{(1)}(\dott)$ denoting the Hankel function of the first kind and order zero (cf.\ \cite[Sect.~9.1]{AS72}).

The result in \eqref{2.41aa} may be considerably strengthened as follows.

\begin{proposition}\lb{p2.9}
If $V\in H^{-1}(\bbR)$, then
\begin{equation}\lb{2.39}
\big(H_0 + \kappa^2I_{L^2(\bbR)}\big)^{-1/2}V\big(H_0 + \kappa^2I_{L^2(\bbR)}\big)^{-1/2} \in \cB_2\big(L^2(\bbR)\big),\quad \kappa\in (0,\infty),
\end{equation}
and
\begin{equation}\lb{2.47aa}
\begin{split}
\Big\| \big(H_0 + \kappa^2I_{L^2(\bbR)}\big)^{-1/2}V\big(H_0 + \kappa^2I_{L^2(\bbR)}\big)^{-1/2} \Big\|_{\cB_2(L^2(\bbR))}^2=\frac{1}{\kappa} \int_{\bbR} \frac{\big|\widehat{V}(\xi)\big|^2}{\xi^2+4\kappa^2}\, d\xi,&\\
\kappa\in (0,\infty).&
\end{split}
\end{equation}
In particular,
\begin{equation}\lb{3.18x}
\lim_{\kappa\to \infty} \Big\| \big(H_0 + \kappa^2I_{L^2(\bbR)}\big)^{-1/2}V\big(H_0 + \kappa^2I_{L^2(\bbR)}\big)^{-1/2} \Big\|_{\cB_2(L^2(\bbR))} = 0.
\end{equation}
Moreover, by \eqref{2.47aa}, the Hilbert--Schmidt property in \eqref{2.39} holds if and only if $V\in H^{-1}(\bbR)$.
\end{proposition}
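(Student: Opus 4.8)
The plan is to reduce the entire proposition to a single Hilbert--Schmidt-norm computation carried out on the Fourier side. Set $B_\kappa := \big(H_0 + \kappa^2 I_{L^2(\bbR)}\big)^{-1/2}$ and $T_\kappa := B_\kappa V B_\kappa$, the operator appearing on the left-hand side of \eqref{2.39}. Under the Fourier transform $\cF$, which is unitary on $L^2(\bbR)$, the operator $B_\kappa$ becomes multiplication by $(\xi^2+\kappa^2)^{-1/2}$, and, by \eqref{2.33}, multiplication by $V$ becomes convolution by $(2\pi)^{-1/2}\widehat{V}$ (here $\widehat{V}$ is a genuine, locally square-integrable function, since $V\in H^{-1}(\bbR)$). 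One therefore expects $\cF T_\kappa\cF^{-1}$ to be the integral operator on $L^2(\bbR)$ with kernel
\[
K_\kappa(\xi,\eta) = \frac{1}{\sqrt{2\pi}}\,\frac{\widehat{V}(\xi-\eta)}{(\xi^2+\kappa^2)^{1/2}\,(\eta^2+\kappa^2)^{1/2}},\qquad \xi,\eta\in\bbR.
\]
To make this rigorous for a genuine distribution $V$, I would test $\big(g,T_\kappa f\big)_{L^2(\bbR)}$ against $f,g\in L^2(\bbR)$ whose Fourier transforms lie in $C_c^\infty(\bbR)$: then $B_\kappa f$ and $B_\kappa g$ are Schwartz functions, the product $\overline{B_\kappa g}\,(B_\kappa f)$ lies in $\cS(\bbR)\subset H^1(\bbR)$, and unravelling the $H^{-1}$--$H^1$ pairing against $V$ via the Parseval identity for tempered distributions produces exactly $\int_{\bbR^2}\overline{\widehat{g}(\xi)}\,K_\kappa(\xi,\eta)\,\widehat{f}(\eta)\,d\xi\,d\eta$; since $\cF^{-1}C_c^\infty(\bbR)$ is dense in $L^2(\bbR)$, this identifies $\cF T_\kappa\cF^{-1}$ with the integral operator with kernel $K_\kappa$ once $K_\kappa\in L^2(\bbR^2)$ is established. (Alternatively, one may approximate $V$ by $V_n\in\cS(\bbR)$ with $V_n\to V$ in $H^{-1}(\bbR)$, for which the kernel identity is the classical convolution formula, and pass to the limit using the operator-norm bound \eqref{2.38a} and the Sobolev multiplier estimate of Appendix \ref{s0}.)

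Granting the kernel identification, unitarity of $\cF$ yields
\[
\|T_\kappa\|_{\cB_2(L^2(\bbR))}^2 = \|K_\kappa\|_{L^2(\bbR^2)}^2 = \frac{1}{2\pi}\int_{\bbR^2}\frac{\big|\widehat{V}(\xi-\eta)\big|^2}{(\xi^2+\kappa^2)\,(\eta^2+\kappa^2)}\,d\xi\,d\eta,
\]
and, by Tonelli's theorem and the substitution $\xi\mapsto u+\eta$, the double integral equals $\frac{1}{2\pi}\int_{\bbR}\big|\widehat{V}(u)\big|^2\,I_\kappa(u)\,du$, where
\[
I_\kappa(u) := \int_{\bbR}\frac{d\eta}{\big((u+\eta)^2+\kappa^2\big)\big(\eta^2+\kappa^2\big)},\qquad u\in\bbR.
\]
An elementary residue computation --- close the $\eta$-contour in the upper half-plane, where the integrand has only the simple poles $\eta=i\kappa$ and $\eta=-u+i\kappa$; equivalently, recognize $I_\kappa$ as the self-convolution of the kernel $x\mapsto(x^2+\kappa^2)^{-1}$ --- gives $I_\kappa(u)=\dfrac{2\pi}{\kappa\,(u^2+4\kappa^2)}$. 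Substituting back produces the identity $\|T_\kappa\|_{\cB_2(L^2(\bbR))}^2=\frac{1}{\kappa}\int_{\bbR}\big|\widehat{V}(u)\big|^2\,(u^2+4\kappa^2)^{-1}\,du$, which is \eqref{2.47aa}. Finally, since there is $c_\kappa>0$ with $(u^2+4\kappa^2)^{-1}\le c_\kappa\,(1+u^2)^{-1}$ for all $u\in\bbR$, this last quantity is bounded by $c_\kappa\kappa^{-1}\|V\|_{H^{-1}(\bbR)}^2<\infty$, which yields \eqref{2.39}.

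The remaining assertions are then immediate from \eqref{2.47aa}. For \eqref{3.18x}: for $\kappa\ge 1/2$ one has $\sup_{\xi\in\bbR}(1+\xi^2)(\xi^2+4\kappa^2)^{-1}\le 1$, so $\|T_\kappa\|_{\cB_2(L^2(\bbR))}^2\le \kappa^{-1}\|V\|_{H^{-1}(\bbR)}^2\to 0$ as $\kappa\to\infty$. For the ``if and only if'' statement: the identity \eqref{2.47aa} is valid (with values in $[0,\infty]$) whenever \eqref{2.33} holds, and since $(u^2+4\kappa^2)^{-1}$ is comparable to $(1+u^2)^{-1}$ on $\bbR$, its right-hand side is finite if and only if $\int_{\bbR}\big|\widehat{V}(u)\big|^2(1+u^2)^{-1}\,du<\infty$, i.e., if and only if $V\in H^{-1}(\bbR)$.

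The step I expect to be the main obstacle is the rigorous kernel identification of the first paragraph. Because $V$ is only a distribution, $T_\kappa$ is a priori defined solely through the factorization over the scale $H^1(\bbR)\hookrightarrow L^2(\bbR)\hookrightarrow H^{-1}(\bbR)$, and one must justify that it coincides with the integral operator with kernel $K_\kappa$ without formally multiplying the (only square-integrable, and logarithmically singular on the diagonal) convolution kernels of $B_\kappa$ against $V$. Restricting the test vectors to those with band-limited Fourier transforms --- so that $B_\kappa f$ and $B_\kappa g$ become Schwartz functions and the Parseval identity for tempered distributions applies cleanly --- removes this difficulty; everything else (a contour integral, Tonelli's theorem, and a dominated-convergence argument) is routine.
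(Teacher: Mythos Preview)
Your proof is correct, and it takes a different---and in some ways cleaner---route than the paper's primary argument.

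The paper's main proof first restricts to $V\in\cS(\bbR)$, factors $V=uv$ with $u,v\in L^2(\bbR)$, and uses the fact that $v\big(H_0+\kappa^2 I\big)^{-1/2}$ and $\big(H_0+\kappa^2 I\big)^{-1/2}u$ are individually Hilbert--Schmidt to conclude that $T_\kappa\in\cB_1\subset\cB_2$. The Hilbert--Schmidt norm is then computed via the trace, using cyclicity to rewrite $\tr(T_\kappa^*T_\kappa)$ in terms of the position-space Green's function $G_0(-\kappa^2,x,y)=\frac{1}{2\kappa}e^{-\kappa|x-y|}$, and finally Plancherel produces the formula. The passage from $V\in\cS(\bbR)$ to general $V\in H^{-1}(\bbR)$ is accomplished by an approximation argument: one shows the sequence $T_\kappa^{(n)}$ is Cauchy in $\cB_2$ using the norm identity for Schwartz $V_n-V_m$, and separately that it converges in operator norm to $T_\kappa$ via the Sobolev-multiplier bound, so the two limits coincide.

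You bypass all of this by working directly on the Fourier side and identifying the integral kernel $K_\kappa$, with the convolution integral $I_\kappa(u)$ evaluated by residues. This is essentially the ``alternative Fourier-space approach'' the paper records in Propositions~\ref{p3.6} and~\ref{p3.7} (see also Remark~\ref{r4.9}), but the paper confines that alternative to $V\in\cS(\bbR)$; your approach~(a)---testing against band-limited $f,g$ and exploiting that $\widehat{V}$ is locally $L^2$ when $V\in H^{-1}(\bbR)$---lets you handle distributional $V$ directly, without the two-stage approximation. What the paper's route buys in exchange is the trace-class containment for Schwartz $V$ (equation~\eqref{2.42}), which it exploits elsewhere, and a connection to the explicit position-space Green's function; your route buys brevity and avoids the need to match limits in two different topologies.
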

\begin{proof}
Let $V\in H^{-1}(\bbR)$ and $\kappa\in (0,\infty)$.  We begin by proving \eqref{2.39} under the stronger assumption that $V\in \cS(\bbR)$.  Assuming that $V\in \cS(\bbR)$ and factoring $V$ according to
\begin{equation}\lb{3.18z}
V=uv,\quad v=|V|^{1/2},\quad u=|V|^{1/2}e^{i\arg(V)},
\end{equation}
one infers that $u,v,(|\dott|^2+\kappa^2)^{-1/2}\in L^2(\bbR)$.  Hence, \cite[Ch.~4]{Si05} implies
\begin{equation}\lb{3.19z}
v\big(H_0 + \kappa^2I_{L^2(\bbR)}\big)^{-1/2}, \, \big(H_0 + \kappa^2I_{L^2(\bbR)}\big)^{-1/2}u\in \cB_2\big(L^2(\bbR)\big),
\end{equation}
and as a consequence,
\begin{equation}\lb{2.42}
\big(H_0 + \kappa^2I_{L^2(\bbR)}\big)^{-1/2}V\big(H_0 + \kappa^2I_{L^2(\bbR)}\big)^{-1/2}\in \cB_1\big(L^2(\bbR)\big) \subset \cB_2\big(L^2(\bbR)\big).
\end{equation}
Furthermore,
\begin{align}
&\Big\| \big(H_0 + \kappa^2I_{L^2(\bbR)}\big)^{-1/2}V\big(H_0 + \kappa^2I_{L^2(\bbR)}\big)^{-1/2} \Big\|_{\cB_2(L^2(\bbR))}^2\no\\
&\quad =\tr_{L^2(\bbR)}\Big(\big(H_0 + \kappa^2I_{L^2(\bbR)}\big)^{-1/2}V^*\big(H_0 + \kappa^2I_{L^2(\bbR)}\big)^{-1}V\big(H_0 + \kappa^2I_{L^2(\bbR)}\big)^{-1/2}\Big)\no\\
&\quad =\tr_{L^2(\bbR)}\Big(V^*\big(H_0 + \kappa^2I_{L^2(\bbR)}\big)^{-1}V\big(H_0 + \kappa^2I_{L^2(\bbR)}\big)^{-1}\Big).\lb{2.43}
\end{align}
The final equality in \eqref{2.43} is an application of the cyclicity property of the trace.  Using \eqref{2.37}, the trace in \eqref{2.43} may be computed as follows:
\begin{align}
&\tr_{L^2(\bbR)}\Big(V^*\big(H_0 + \kappa^2I_{L^2(\bbR)}\big)^{-1}V\big(H_0 + \kappa^2I_{L^2(\bbR)}\big)^{-1}\Big)\no\\
&\quad= \frac{1}{\kappa} \Big(V,\big(H_0+4\kappa^2I_{L^2(\bbR)}\big)^{-1}V\Big)_{L^2(\bbR)}\no\\
&\quad= \frac{1}{\kappa} \int_{\bbR} \frac{\big|\widehat{V}(\xi)\big|^2}{\xi^2+4\kappa^2}\, d\xi,\lb{2.44}
\end{align}
where the final equality in \eqref{2.44} in an application of Plancherel's identity.  The result of \eqref{2.44} in \eqref{2.43} is
\begin{equation}\lb{2.45}
\Big\| \big(H_0 + \kappa^2I_{L^2(\bbR)}\big)^{-1/2}V\big(H_0 + \kappa^2I_{L^2(\bbR)}\big)^{-1/2} \Big\|_{\cB_2(L^2(\bbR))}^2= \frac{1}{\kappa} \int_{\bbR} \frac{\big|\widehat{V}(\xi)\big|^2}{\xi^2+4\kappa^2}\, d\xi.
\end{equation}

Returning to arbitrary $V\in H^{-1}(\bbR)$, the idea is to now approximate $V$ by functions in $\cS(\bbR)$ and appeal to \eqref{2.45}.  Using the fact that $\cS(\bbR)$ is dense in $H^{-1}(\bbR)$ (see, e.g., \cite[Proposition 3.17]{HT08}), let $\{V_n\}_{n=1}^{\infty}\subset\cS(\bbR)$ be a sequence with the property that $\|V_n-V\|_{H^{-1}(\bbR)}\to 0$ as $n\to \infty$.  By \eqref{2.45},
\begin{align}
&\Big\| \big(H_0 + \kappa^2I_{L^2(\bbR)}\big)^{-1/2}(V_n-V_m)\big(H_0 + \kappa^2I_{L^2(\bbR)}\big)^{-1/2} \Big\|_{\cB_2(L^2(\bbR))}^2\no\\
&\quad= \frac{1}{\kappa}\int_{\bbR}\frac{\big|\widehat{V}_n(\xi)-\widehat{V}_m(\xi)\big|^2}{\xi^2+4\kappa^2}\, d\xi\no\\
&\quad\leq \frac{c(\kappa)}{\kappa} \|V_n-V_m\|_{H^{-1}(\bbR)}^2,\quad m,n\in \bbN,\lb{2.46}
\end{align}
where $c(\kappa):=\max\{1,1/(4\kappa^2)\}$.  In particular, the estimate in \eqref{2.46} implies that the sequence
\begin{equation}\lb{2.54}
\Big\{\big(H_0 + \kappa^2I_{L^2(\bbR)}\big)^{-1/2}V_n\big(H_0 + \kappa^2I_{L^2(\bbR)}\big)^{-1/2}\Big\}_{n\in\bbN}
\end{equation}
is Cauchy, thus convergent, in $\cB_2\big(L^2(\bbR)\big)$.  Let $-A_V(-\kappa^2)\in \cB_2\big(L^2(\bbR)\big)$ denote the limit of the sequence in \eqref{2.54} in $\cB_2\big(L^2(\bbR)\big)$.  (The leading negative sign in $-A_V(-\kappa^2)$ is taken to be consistent with the sign convention in \eqref{5.22}.)  On the other hand, by \eqref{2.35a},
\begin{align}
&\Big\| \big(H_0 + \kappa^2I_{L^2(\bbR)}\big)^{-1/2}(V_n-V)\big(H_0 + \kappa^2I_{L^2(\bbR)}\big)^{-1/2} \Big\|_{\cB(L^2(\bbR))}\no\\
&\quad \leq 2^{1/2}M(\kappa)\|V_n-V\|_{H^{-1}(\bbR)},\quad n\in \bbN,\lb{2.55}
\end{align}
where $M(\kappa)$ is the constant in \eqref{2.43a}.  The estimate in \eqref{2.55} shows that \eqref{2.54} converges in $\cB\big(L^2(\bbR)\big)$-norm to
\begin{equation}
\big(H_0 + \kappa^2I_{L^2(\bbR)}\big)^{-1/2}V\big(H_0 + \kappa^2I_{L^2(\bbR)}\big)^{-1/2}.
\end{equation}
Since convergence in $\cB_2\big(L^2(\bbR)\big)$ implies convergence in $\cB\big(L^2(\bbR)\big)$, and limits in $\cB\big(L^2(\bbR)\big)$ are unique, it follows that
\begin{equation}
\big(H_0 + \kappa^2I_{L^2(\bbR)}\big)^{-1/2}V\big(H_0 + \kappa^2I_{L^2(\bbR)}\big)^{-1/2}=-A_V(-\kappa^2)\in \cB_2\big(L^2(\bbR)\big).
\end{equation}
Finally, \eqref{2.45} implies
\begin{equation}\lb{4.31}
\begin{split}
&\Big\| \big(H_0 + \kappa^2I_{L^2(\bbR)}\big)^{-1/2}V_n\big(H_0 + \kappa^2I_{L^2(\bbR)}\big)^{-1/2} \Big\|_{\cB_2(L^2(\bbR))}^2\\
&\quad = \frac{1}{\kappa} \int_{\bbR} \frac{\big|\widehat{V}_n(\xi)\big|^2}{\xi^2+4\kappa^2}\, d\xi,\quad n\in \bbN.
\end{split}
\end{equation}
Taking $n\to \infty$ throughout \eqref{4.31} yields \eqref{2.47aa}.

To prove \eqref{3.18x}, note that for $\kappa\geq 1/2$,
\begin{equation}
0\leq \frac{\xi^2+1}{\xi^2+4\kappa^2} \leq 1,\quad \xi\in \bbR,
\end{equation}
so that
\begin{equation}
\int_{\bbR}\frac{\big|\widehat{V}(\xi) \big|^2}{\xi^2+4\kappa^2}\, d\xi \leq \|V\|_{H^{-1}(\bbR)}^2, \quad \kappa\in [1/2,\infty).
\end{equation}
Thus, \eqref{3.18x} follows by taking $\kappa\to \infty$ in \eqref{2.47aa}.
\end{proof}

\begin{remark}\lb{r4.5c}
$(i)$  The results of Proposition \ref{p2.9} are known.  In fact, both \eqref{2.39} and \eqref{2.47aa} appear in \cite[Prop.~2.1]{KVZ18} and the lecture notes \cite[Lemma 4.3.1]{Ko} (see also \cite[Prop.~2.1]{KV19}, \cite[eq.~(1.11)]{CKV22}). 

\vspace{2mm}
\noindent
$(ii)$ If $z\in \bbC\backslash[0,\infty)$ and $V\in H^{-1}(\bbR)$, then \eqref{2.39} and \eqref{2.18aa} imply
\begin{align}
& \big(H_0-zI_{L^2(\bbR)}\big)^{-1/2} V \big(H_0-zI_{L^2(\bbR)}\big)^{-1/2}\no\\
&\quad = \overline{\big(H_0-zI_{L^2(\bbR)}\big)^{-1/2} \big(H_0+I_{L^2(\bbR)}\big)^{1/2}} 
\big(H_0+I_{L^2(\bbR)}\big)^{-1/2} V \big(H_0+I_{L^2(\bbR)}\big)^{-1/2}\no\\
&\qquad \times \big(H_0+I_{L^2(\bbR)}\big)^{1/2} \big(H_0-zI_{L^2(\bbR)}\big)^{-1/2}\lb{2.61}
\end{align}
with
\begin{align}
&\overline{\big(H_0-zI_{L^2(\bbR)}\big)^{-1/2}\big(H_0+I_{L^2(\bbR)}\big)^{1/2}}\in \cB\big(L^2(\bbR)\big),\no\\
&\big(H_0+I_{L^2(\bbR)}\big)^{1/2}\big(H_0-zI_{L^2(\bbR)}\big)^{-1/2}\in \cB\big(L^2(\bbR)\big),\\
&\big(H_0+I_{L^2(\bbR)}\big)^{-1/2}V\big(H_0+I_{L^2(\bbR)}\big)^{-1/2}\in \cB_2\big(L^2(\bbR)\big).\no
\end{align}
Hence, for $V\in H^{-1}(\bbR)$,
\begin{equation}\lb{2.63}
\big(H_0-zI_{L^2(\bbR)}\big)^{-1/2} V \big(H_0-zI_{L^2(\bbR)}\big)^{-1/2}\in \cB_2\big(L^2(\bbR)\big),\quad z\in \bbC\backslash[0,\infty).
\end{equation}
\hfill$\diamond$
\end{remark}

By Proposition \ref{p2.9}, one infers that Hypothesis \ref{h5.4} holds with $H_0$ defined by \eqref{4.11c}, a fixed $V\in H^{-1}(\bbR)$, $\cH_{+1}(H_0)=H^1(\bbR)$, and $\cH_{-1}(H_0)=H^{-1}(\bbR)$.  Therefore, invoking Theorem \ref{t5.9}, one may define a (densely defined) $L^2(\bbR)$ realization $H$ of the purely formal sum ``$H_0+V$'' indirectly according to \eqref{5.22} and \eqref{5.28} as follows.  Set
\begin{equation}\lb{4.37c}
A_V(z):= - \big(H_0-zI_{L^2(\bbR)}\big)^{-1/2} V \big(H_0-zI_{L^2(\bbR)}\big)^{-1/2},\quad z\in \bbC\backslash[0,\infty),
\end{equation}
and
\begin{equation}\lb{4.38c}
\begin{split}
R(z) := \big(H_0 - zI_{L^2(\bbR)}\big)^{-1/2} \big[I_{L^2(\bbR)}-A_V(z)\big]^{-1} \big(H_0 - zI_{L^2(\bbR)}\big)^{-1/2},&\\
z\in \{\zeta\in \rho(H_0)\,|\, 1\in \rho(A_V(\zeta))\}=\bbC\backslash([0,\infty)\cup\cD_{H_0,V}),&
\end{split}
\end{equation}
where $\cD_{H_0,V}$ is the discrete set guaranteed to exist by Proposition \ref{p5.6}. A combination of Theorems \ref{t5.9} and \ref{t2.11c} together with \eqref{5.29a} and \eqref{2.39} then yield the following result.

\begin{theorem}\lb{t4.6c}
Let $V\in H^{-1}(\bbR)$ and let $A_V(\dott)$ and $R(\dott)$ be defined by \eqref{4.37c} and \eqref{4.38c}.  The map $R(\dott)$ uniquely defines a densely defined, closed, linear operator $H$ in $L^2(\bbR)$ by
\begin{equation}\lb{5.28a}
R(z) = \big(H-zI_{L^2(\bbR)}\big)^{-1},\quad z\in \bbC\backslash([0,\infty)\cup \cD_{H_0,V}).
\end{equation}
In addition, $(\bbC\backslash([0,\infty)\cup \cD_{H_0,V}))\subseteq \rho(H)$ and $H$ has the property:
\begin{equation}\lb{5.29b}
\Big[\big(H-zI_{L^2(\bbR)}\big)^{-1} - \big(H_0-zI_{L^2(\bbR)}\big)^{-1}\Big] \in \cB_2(\cH),\quad z\in \rho(H_0)\cap\rho(H).
\end{equation}
Moreover, a point $\lambda_0\in \bbC\backslash[0,\infty)$, is an eigenvalue of $H$ with geometric multiplicity $k\in \bbN$ if and only if $1$ is an eigenvalue of $A_V(\lambda_0)$ with geometric multiplicity $k$.
\end{theorem}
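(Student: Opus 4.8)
The plan is to assemble Theorem \ref{t4.6c} almost entirely from the abstract machinery of Section \ref{s2}, using Proposition \ref{p2.9} as the bridge that verifies the abstract hypothesis in the concrete setting. First I would observe that Proposition \ref{p2.9} (together with \eqref{2.33} from the Sobolev multiplier property and Remark \ref{r4.5c}(ii), which gives the Hilbert--Schmidt property for all $z\in\bbC\backslash[0,\infty)$) shows that with $H_0$ defined by \eqref{4.11c}, $\cH_{+1}(H_0)=H^1(\bbR)$, $\cH_{-1}(H_0)=H^{-1}(\bbR)$, and the fixed $V\in H^{-1}(\bbR)$, we have $A_V(z)\in\cB_2(L^2(\bbR))\subset\cB_\infty(L^2(\bbR))$ for every $z\in\bbC\backslash[0,\infty)=\rho(H_0)$. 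It remains to check the smallness condition \eqref{5.23}: by \eqref{2.47aa} and \eqref{3.18x} (or the explicit estimate $\|A_V(-\kappa^2)\|_{\cB(L^2(\bbR))}^2\le \|A_V(-\kappa^2)\|_{\cB_2(L^2(\bbR))}^2 = \kappa^{-1}\int_{\bbR}|\widehat V(\xi)|^2(\xi^2+4\kappa^2)^{-1}\,d\xi$) one picks $\kappa>0$ large enough that this quantity is $<1$, so $E_0:=-\kappa^2$ works. Hence Hypothesis \ref{h5.4} is satisfied.

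With Hypothesis \ref{h5.4} verified, the bulk of the theorem is a direct transcription of the abstract results. Proposition \ref{p5.6} furnishes the discrete set $\cD_{H_0,V}\subset\rho(H_0)=\bbC\backslash[0,\infty)$ appearing in \eqref{4.38c}. Theorem \ref{t5.9} then asserts that $R(\dott)$ defined by \eqref{4.38c} (which is exactly \eqref{5.26} in the present concrete notation) uniquely determines a densely defined, closed, linear operator $H$ in $L^2(\bbR)$ via \eqref{5.28a}, that $\bbC\backslash([0,\infty)\cup\cD_{H_0,V})\subseteq\rho(H)$, and that the resolvent difference is compact. To upgrade the compactness in \eqref{5.29} to the Hilbert--Schmidt statement \eqref{5.29b} I would revisit the identity \eqref{5.29a}: the difference equals $(H_0-zI)^{-1/2}A_V(z)[I-A_V(z)]^{-1}(H_0-zI)^{-1/2}$; since $A_V(z)\in\cB_2(L^2(\bbR))$ by \eqref{2.63}, while $(H_0-zI)^{-1/2}$ and $[I-A_V(z)]^{-1}$ are bounded, the product is in $\cB_2$ for $z\in\rho(H_0)\backslash\cD_{H_0,V}$; extension to all $z\in\rho(H_0)\cap\rho(H)$ uses the resolvent-identity trick already invoked in the proof of Theorem \ref{t5.9} (multiplying by bounded factors $(H-z_0I)(H-zI)^{-1}$ and $(H_0-z_0I)(H_0-zI)^{-1}$ preserves $\cB_2$).

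Finally, the eigenvalue correspondence is precisely Theorem \ref{t2.11c}: for $\lambda_0\in\bbC\backslash[0,\infty)=\rho(H_0)$, the map $(H_0-\lambda_0 I)^{1/2}$ restricts to a linear isomorphism from $\ker(H-\lambda_0 I)$ onto $\ker(I-A_V(\lambda_0))$, so the geometric multiplicities agree and are finite (the latter because $A_V(\lambda_0)$ is compact); in particular $\lambda_0$ is an eigenvalue of $H$ with geometric multiplicity $k\in\bbN$ iff $1$ is an eigenvalue of $A_V(\lambda_0)$ with geometric multiplicity $k$. I do not expect any genuine obstacle here: the only thing requiring a moment's care is confirming that the ``for some (hence all)'' clause in \eqref{5.22} is met — i.e.\ that $A_V(z)$ is compact on \emph{all} of $\rho(H_0)$, not just at $-\kappa^2$ — but this is immediate from \eqref{2.63} in Remark \ref{r4.5c}(ii), which already records the Hilbert--Schmidt property throughout $\bbC\backslash[0,\infty)$. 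The slightly delicate bookkeeping point worth stating explicitly is that $\rho(H_0)=\bbC\backslash[0,\infty)$ for this particular $H_0$, so all the abstract sets $\rho(H_0)\backslash\cD_{H_0,V}$ specialize to $\bbC\backslash([0,\infty)\cup\cD_{H_0,V})$ as written in \eqref{4.38c} and \eqref{5.28a}.
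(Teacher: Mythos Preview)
Your proposal is correct and mirrors the paper's own approach almost exactly: the paper states (just before Theorem \ref{t4.6c}) that the result follows from ``a combination of Theorems \ref{t5.9} and \ref{t2.11c} together with \eqref{5.29a} and \eqref{2.39},'' after Proposition \ref{p2.9} has verified Hypothesis \ref{h5.4}. You have simply spelled out that combination in detail, including the upgrade from $\cB_\infty$ to $\cB_2$ via \eqref{5.29a} and the resolvent-identity extension to all $z\in\rho(H_0)\cap\rho(H)$, which is precisely what the paper intends.
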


In the proof of Proposition \ref{p2.9}, the norm identity in \eqref{2.47aa} is computed by expressing the square of the norm as a trace, see \eqref{2.43}.  The trace is tractable as it may be calculated using the Green's function of the resolvent of $H_0$; this is the calculation carried out in \eqref{2.44}.  Alternatively, the trace may be calculated using integral kernels in Fourier space.  As an illustration of the various possible approaches in connection with calculating the trace in \eqref{2.43}, we present the details of the Fourier transform approach next.  This approach relies on the following integration result.

\begin{proposition}\lb{p3.6}
If $\eta\in \bbR$ and $\kappa\in (0,\infty)$, then
\begin{equation}\lb{3.33}
\frac{\kappa}{2\pi}\int_{\bbR} \frac{d\xi}{\big[\xi^2+\kappa^2\big]\big[(\eta-\xi)^2+\kappa^2\big]} = \frac{1}{\eta^2+4\kappa^2}.
\end{equation}
\end{proposition}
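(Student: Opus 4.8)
The plan is to evaluate the integral in \eqref{3.33} by contour integration, treating $\eta$ and $\kappa$ as fixed real parameters with $\kappa>0$. First I would observe that the integrand, viewed as a function of the complex variable $\xi$, is rational and decays like $|\xi|^{-4}$ at infinity, so it is integrable and amenable to closing the contour in either half-plane. The poles are located at $\xi = \pm i\kappa$ (from the first factor) and at $\xi = \eta \pm i\kappa$ (from the second factor); since $\kappa>0$, exactly two of these, namely $\xi = i\kappa$ and $\xi = \eta + i\kappa$, lie in the open upper half-plane, and they are simple provided $\eta\neq 0$ (the case $\eta=0$ follows by continuity, or by a separate elementary computation, so I would not dwell on it).

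Next I would close the contour with a large semicircle in the upper half-plane; the semicircular arc contributes nothing in the limit because of the $|\xi|^{-4}$ decay, so by the residue theorem the integral equals $2\pi i$ times the sum of the residues at $\xi=i\kappa$ and $\xi=\eta+i\kappa$. Writing the denominator as $(\xi-i\kappa)(\xi+i\kappa)(\xi-\eta-i\kappa)(\xi-\eta+i\kappa)$, the residue at $\xi=i\kappa$ is $\big[(2i\kappa)(i\kappa-\eta-i\kappa)(i\kappa-\eta+i\kappa)\big]^{-1} = \big[(2i\kappa)(-\eta)(2i\kappa-\eta)\big]^{-1}$, and the residue at $\xi=\eta+i\kappa$ is $\big[(\eta+i\kappa-i\kappa)(\eta+i\kappa+i\kappa)(2i\kappa)\big]^{-1} = \big[\eta(\eta+2i\kappa)(2i\kappa)\big]^{-1}$. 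Adding these two residues, the common factor $1/(2i\kappa)$ pulls out and one is left with $\dfrac{1}{2i\kappa}\Big(\dfrac{1}{\eta(\eta+2i\kappa)} - \dfrac{1}{\eta(\eta-2i\kappa)}\Big)$, and the bracket simplifies, after combining over the common denominator $\eta(\eta^2+4\kappa^2)$, to $\dfrac{-4i\kappa}{\eta(\eta^2+4\kappa^2)}$; the factor of $\eta$ cancels, leaving the sum of residues equal to $\dfrac{-2}{\eta^2+4\kappa^2}$.

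Finally I would multiply by $2\pi i$ from the residue theorem and then by the prefactor $\kappa/(2\pi)$ appearing in \eqref{3.33}: the integral $\int_{\bbR}(\cdots)\,d\xi$ equals $2\pi i \cdot \dfrac{-2}{\eta^2+4\kappa^2} = \dfrac{-4\pi i}{\eta^2+4\kappa^2}$\dash wait, this is purely imaginary, which signals an arithmetic slip in the residue bookkeeping rather than a structural problem; the integrand is manifestly positive, so the correct sum of residues must be $\dfrac{1}{\pi i}\cdot\dfrac{1}{\eta^2+4\kappa^2}\cdot(\text{something real})$, and carefully recomputing $i\kappa - \eta - i\kappa = -\eta$ and $i\kappa-\eta+i\kappa = 2i\kappa-\eta$ shows the first residue is $\dfrac{1}{(2i\kappa)(-\eta)(2i\kappa-\eta)} = \dfrac{1}{2i\kappa\,\eta\,(\eta-2i\kappa)}$, whereupon the two residues combine to $\dfrac{1}{2i\kappa\,\eta}\Big(\dfrac{1}{\eta-2i\kappa}+\dfrac{1}{\eta+2i\kappa}\Big) = \dfrac{1}{2i\kappa\,\eta}\cdot\dfrac{2\eta}{\eta^2+4\kappa^2} = \dfrac{1}{i\kappa(\eta^2+4\kappa^2)}$. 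Multiplying by $2\pi i$ gives $\int_{\bbR}(\cdots)\,d\xi = \dfrac{2\pi}{\kappa(\eta^2+4\kappa^2)}$, and then multiplying by $\kappa/(2\pi)$ yields exactly $\dfrac{1}{\eta^2+4\kappa^2}$, as claimed. The one point requiring care\dash and the main (minor) obstacle\dash is precisely this sign and factor bookkeeping in combining the two residues; everything else (integrability, vanishing of the arc, the $\eta=0$ limiting case) is routine. An alternative, entirely real-variable proof via Plancherel's theorem is also available: recognizing $1/(\xi^2+\kappa^2)$ as $(\pi/\kappa)$ times the Fourier transform of $e^{-\kappa|\cdot|}$ and $1/((\eta-\xi)^2+\kappa^2)$ similarly, the integral becomes a convolution of two exponentials evaluated via Parseval, reproducing the same closed form; I would mention this as a remark but carry out the residue computation in detail.
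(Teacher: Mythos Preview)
Your proof is correct. After the arithmetic slip (which you catch and fix), the residue computation lands on $\int_{\bbR}(\cdots)\,d\xi = 2\pi/[\kappa(\eta^2+4\kappa^2)]$, and the prefactor $\kappa/(2\pi)$ gives the claim. The $\eta=0$ case is handled correctly by continuity.

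Your route, however, is not the one the paper takes. The paper uses precisely the Fourier/convolution argument you mention only as a closing remark: it writes $I_\kappa(\eta)$ as the convolution $[(\dott)^2+\kappa^2]^{-1}\ast[(\dott)^2+\kappa^2]^{-1}$ evaluated at $\eta$, applies the convolution theorem in the form $f\ast g = (2\pi)^{1/2}\cF\{(\cF^{-1}f)(\cF^{-1}g)\}$, inserts the explicit transform $\cF^{-1}\{(|\dott|^2+\kappa^2)^{-1}\}(x) = (\pi/2)^{1/2}\kappa^{-1}e^{-\kappa|x|}$, squares to get $e^{-2\kappa|x|}$, and transforms back. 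Both arguments are standard; yours is arguably more direct once the residues are organized, while the paper's choice is consistent with the surrounding section, where the same Fourier dictionary (the pair $(\xi^2+\kappa^2)^{-1} \leftrightarrow e^{-\kappa|x|}$) is reused several times in the Hilbert--Schmidt norm computations. One stylistic point: the written-out ``wait, arithmetic slip'' passage should be collapsed into a single clean computation before you finalize the write-up.
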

\begin{proof}
Let $I_{\kappa}(\eta)$, $\eta\in \bbR$, denote the integral on the left-hand side of the equality in \eqref{3.33}.  The function $I_{\kappa}(\dott)$ is a convolution:
\begin{equation}\lb{3.34}
I_{\kappa}(\eta) = \bigg[\frac{1}{|\dott|^2+\kappa^2} \ast \frac{1}{|\dott|^2+\kappa^2} \bigg](\eta),\quad \eta\in \bbR.
\end{equation}
By the convolution theorem expressed in the form $f \ast g = (2\pi)^{1/2} \cF \big[\cF^{-1}f\cdot\cF^{-1}g\big]$, \eqref{3.34} may be recast as
\begin{equation}\lb{3.35}
I_{\kappa}(\dott) = (2\pi)^{1/2} \cF\Bigg\{\cF^{-1}\bigg\{ \frac{1}{|\dott|^2+\kappa^2} \bigg\} \cdot \cF^{-1}\bigg\{ \frac{1}{|\dott|^2+\kappa^2} \bigg\} \Bigg\}.
\end{equation}
One computes:
\begin{align}
\cF^{-1}\bigg\{ \frac{1}{|\dott|^2+\kappa^2} \bigg\}(x) 
= \bigg(\frac{2}{\pi}\bigg)^{1/2} \int_0^{\infty} \frac{\cos(|x|\xi)}{\xi^2+\kappa^2}\, d\xi 
= \bigg(\frac{\pi}{2}\bigg)^{1/2} \frac{e^{-\kappa|x|}}{\kappa},\lb{3.36}
\end{align}
where the final equality in \eqref{3.36} follows from an application of \cite[3.723.2]{GR80}.  Thus, by \eqref{3.35} and \eqref{3.36},
\begin{align}\lb{3.37}
I_{\kappa}(\eta) &= \frac{(2\pi)^{3/2}}{4\kappa^2}\cF\big\{e^{-2\kappa|\dott|} \big\}(\eta) = \frac{2\pi}{\kappa} \frac{1}{\eta^2+4\kappa^2},\quad \eta\in \bbR.
\end{align}
The Fourier transform in \eqref{3.37} may be deduced from \eqref{3.36} or, alternatively, directly computed as follows:
\begin{align}
\cF\big\{e^{-2\kappa|\dott|} \big\}(\eta) 
= \frac{1}{(2\pi)^{1/2}}\cdot 2 \int_0^{\infty} \cos(|\eta|x)e^{-2\kappa x}\, dx 
= \frac{1}{(2\pi)^{1/2}} \frac{4\kappa}{\eta^2+4\kappa^2},\quad \eta\in \bbR.\lb{3.38}
\end{align}
The final equality in \eqref{3.38} is an application of \cite[3.893.2]{GR80}.  Finally, \eqref{3.33} immediately follows from \eqref{3.37} after some minor algebraic manipulations.
\end{proof}

\begin{proposition}\lb{p3.7}
If $V\in \cS(\bbR)$ and $\kappa\in (0,\infty)$, then
\begin{equation}\lb{3.39}
A_V(-\kappa^2) := -\big(H_0+\kappa^2I_{L^2(\bbR)}\big)^{-1/2}V\big(H_0+\kappa^2I_{L^2(\bbR)}\big)^{-1/2} \in \cB_1\big(L^2(\bbR)\big)
\end{equation}
and, recalling the factorization $V=uv$ introduced in \eqref{3.18z},
\begin{align}
\tr_{L^2(\bbR)}\big(A_V(-\kappa^2)\big) &= -\frac{1}{2\kappa}\int_{\bbR}V(x) \, dx\no\\
&= -\tr_{L^2(\bbR)}\Big(u\big(H_0+\kappa^2I_{L^2(\bbR)}\big)^{-1}v \Big),\lb{3.40}\\
\big\|A_V(-\kappa^2)\big\|_{\cB_2(L^2(\bbR))}^2 &= \frac{1}{\kappa}\int_{\bbR} \frac{\big|\widehat{V}(\eta)\big|^2}{\eta^2+4\kappa^2}\, d\eta.\lb{3.41}
\end{align}
\end{proposition}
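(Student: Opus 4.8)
The plan is to reduce all three claims to trace and Hilbert--Schmidt identities that are already assembled in the proof of Proposition~\ref{p2.9}, combined with the elementary integral of Proposition~\ref{p3.6}. The trace class assertion \eqref{3.39} requires no new work: for $V\in\cS(\bbR)$ the factorization $V=uv$ of \eqref{3.18z} together with the Hilbert--Schmidt memberships \eqref{3.19z} exhibit $A_V(-\kappa^2)$ as a product of two operators in $\cB_2\big(L^2(\bbR)\big)$, hence $A_V(-\kappa^2)\in\cB_1\big(L^2(\bbR)\big)$; this is precisely the observation already recorded in \eqref{2.42}.

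For the trace formula \eqref{3.40} I would write $A_V(-\kappa^2)=-\big[\big(H_0+\kappa^2 I_{L^2(\bbR)}\big)^{-1/2}u\big]\big[v\big(H_0+\kappa^2 I_{L^2(\bbR)}\big)^{-1/2}\big]$ as a product of two Hilbert--Schmidt operators and invoke cyclicity of the trace to obtain
\[
\tr_{L^2(\bbR)}\big(A_V(-\kappa^2)\big)=-\tr_{L^2(\bbR)}\Big(v\big(H_0+\kappa^2 I_{L^2(\bbR)}\big)^{-1}u\Big)=-\tr_{L^2(\bbR)}\Big(u\big(H_0+\kappa^2 I_{L^2(\bbR)}\big)^{-1}v\Big),
\]
the last step by cyclicity again, which is the second equality in \eqref{3.40}. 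The operator $u\big(H_0+\kappa^2 I_{L^2(\bbR)}\big)^{-1}v$ is the product of the Hilbert--Schmidt operators $u\big(H_0+\kappa^2 I_{L^2(\bbR)}\big)^{-1/2}$ and $\big(H_0+\kappa^2 I_{L^2(\bbR)}\big)^{-1/2}v$, so its trace is the $\bbR^2$-integral $\int_{\bbR}\int_{\bbR}a(x,z)b(z,x)\,dz\,dx$ of the product of their kernels $a,b$; here $\int_{\bbR}a(x,z)b(z,x)\,dz$ equals $u(x)v(x)$ times the value of the convolution kernel \eqref{2.37} at coincident points, namely $(2\kappa)^{-1}$, so that
\[
\tr_{L^2(\bbR)}\Big(u\big(H_0+\kappa^2 I_{L^2(\bbR)}\big)^{-1}v\Big)=\frac{1}{2\kappa}\int_{\bbR}u(x)v(x)\,dx=\frac{1}{2\kappa}\int_{\bbR}V(x)\,dx,
\]
which completes \eqref{3.40}.

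For the Hilbert--Schmidt identity \eqref{3.41}, instead of passing through the trace as in \eqref{2.43}--\eqref{2.44} I would compute $\big\|A_V(-\kappa^2)\big\|_{\cB_2(L^2(\bbR))}^2$ directly as the squared $L^2(\bbR^2)$-norm of the integral kernel of $A_V(-\kappa^2)$. Since $\cF$ is unitary, this equals the squared $L^2(\bbR^2)$-norm of the kernel of $\cF A_V(-\kappa^2)\cF^{-1}$; and conjugating $A_V(-\kappa^2)=-\big(H_0+\kappa^2 I_{L^2(\bbR)}\big)^{-1/2}V\big(H_0+\kappa^2 I_{L^2(\bbR)}\big)^{-1/2}$ by $\cF$ turns each resolvent square root $\big(H_0+\kappa^2 I_{L^2(\bbR)}\big)^{-1/2}$ into multiplication by $(\xi^2+\kappa^2)^{-1/2}$ and turns $V$ (which acts by multiplication by the function $V$) into the convolution operator $\phi\mapsto(2\pi)^{-1/2}\widehat{V}\ast\phi$, the powers of $2\pi$ being tracked exactly as in the proof of Proposition~\ref{p3.6}. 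Hence $\cF A_V(-\kappa^2)\cF^{-1}$ is the integral operator with kernel
\[
\alpha(\xi,\mu)=-\frac{1}{(2\pi)^{1/2}}\cdot\frac{\widehat{V}(\xi-\mu)}{(\xi^2+\kappa^2)^{1/2}(\mu^2+\kappa^2)^{1/2}},\quad \xi,\mu\in\bbR,
\]
so that
\[
\big\|A_V(-\kappa^2)\big\|_{\cB_2(L^2(\bbR))}^2=\int_{\bbR}\int_{\bbR}|\alpha(\xi,\mu)|^2\,d\xi\,d\mu=\frac{1}{2\pi}\int_{\bbR}\int_{\bbR}\frac{\big|\widehat{V}(\xi-\mu)\big|^2}{\big[\xi^2+\kappa^2\big]\big[\mu^2+\kappa^2\big]}\,d\xi\,d\mu,
\]
and the substitution $\eta=\xi-\mu$ (Fubini being legitimate since $\widehat{V}\in\cS(\bbR)$ while the Lorentzian factors are integrable) yields
\[
\big\|A_V(-\kappa^2)\big\|_{\cB_2(L^2(\bbR))}^2=\frac{1}{2\pi}\int_{\bbR}\big|\widehat{V}(\eta)\big|^2\bigg[\int_{\bbR}\frac{d\mu}{\big[\mu^2+\kappa^2\big]\big[(\mu+\eta)^2+\kappa^2\big]}\bigg]d\eta.
\]
The bracketed inner integral is exactly the one evaluated in Proposition~\ref{p3.6} (with $\eta$ replaced by $-\eta$, which is immaterial since it enters only through $\eta^2$) and equals $(2\pi/\kappa)(\eta^2+4\kappa^2)^{-1}$; substituting this produces \eqref{3.41}.

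The computation is essentially bookkeeping once Proposition~\ref{p3.6} is in hand, and I do not anticipate a serious obstacle. The points calling for care — none of them deep — are the Fubini interchange in the double $(\xi,\mu)$ integral (legitimate by the Schwartz decay of $\widehat{V}$ against the integrable Lorentzian factors), the correct powers of $2\pi$ in the Fourier and convolution conventions (handled exactly as in the proof of Proposition~\ref{p3.6}), and the identification of the trace in \eqref{3.40} with a double kernel integral, which is immediate once $u\big(H_0+\kappa^2 I_{L^2(\bbR)}\big)^{-1}v$ is written as a product of two Hilbert--Schmidt operators with explicit kernels. The only mild pitfall I foresee is avoiding sign or conjugation slips when transporting $V$ through $\cF$.
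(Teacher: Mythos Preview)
Your proposal is correct and follows essentially the same route as the paper. Both arguments rest on the Fourier-space kernel \eqref{3.43} and Proposition~\ref{p3.6} for the Hilbert--Schmidt identity \eqref{3.41}; you compute $\|\alpha\|_{L^2(\bbR^2)}^2$ directly, whereas the paper first writes out the kernel of $\cF A_V(-\kappa^2)^*A_V(-\kappa^2)\cF^{-1}$ and then integrates its diagonal, which amounts to the same double integral. The only genuine (and minor) difference is in the trace formula \eqref{3.40}: the paper stays in Fourier space and integrates the diagonal of \eqref{3.43}, obtaining $-(2\pi)^{-1/2}\widehat V(0)\int_{\bbR}(\xi^2+\kappa^2)^{-1}\,d\xi$, while you pass by cyclicity to $u(H_0+\kappa^2 I)^{-1}v$ and read off the trace from the Green's function \eqref{2.37} in position space; both are one-line computations.
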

\begin{proof}
The trace class containment in \eqref{3.39} immediately follows from
\begin{equation}
v\big(H_0 + \kappa^2I_{L^2(\bbR)}\big)^{-1/2},\big(H_0 + \kappa^2I_{L^2(\bbR)}\big)^{-1/2}u\in \cB_2\big(L^2(\bbR)\big),
\end{equation}
which is a consequence of \cite[Ch.~4]{Si05}.  In turn, to deduce \eqref{3.40}, one notes that $\cF A_V(-\kappa^2) \cF^{-1}$ is an integral operator with integral kernel
\begin{equation}\lb{3.43}
\begin{split}
\big[\cF A_V(\kappa) \cF^{-1}\big](\xi,\xi') = -(2\pi)^{-1/2} \big(\xi^2+\kappa^2\big)^{-1/2} \widehat{V}(\xi-\xi')\big((\xi')^2+\kappa^2\big)^{-1/2},&\\
\xi,\xi'\in \bbR.&
\end{split}
\end{equation}
In consequence,
\begin{align}
& \tr_{L^2(\bbR)}\big(A_V(-\kappa^2) \big) = \int_{\bbR} \big[\cF A_V(-\kappa^2) \cF^{-1}\big](\xi,\xi)\, d\xi
= -(2\pi)^{-1/2}\, \widehat{V}(0)\int_{\bbR} \frac{d\xi}{\xi^2+\kappa^2}   \no \\ 
& \quad = -\frac{1}{2\pi} \int_{\bbR}V(x)\, dx \cdot \int_{\bbR}\frac{d\xi}{\xi^2+\kappa^2} 
= -\frac{1}{2\kappa} \int_{\bbR} V(x)\, dx.
\end{align}
The second equality in \eqref{3.40} follows from the cyclicity property of the trace functional.

Turning to \eqref{3.41}, the operator
\begin{equation}
\cF A_V(-\kappa^2)^*A_V(-\kappa^2)\cF^{-1} = \big[\cF A_V(-\kappa^2)^*\cF^{-1}\big]\big[\cF A_V(-\kappa^2)\cF^{-1}\big]
\end{equation}
is an integral operator with integral kernel
\begin{align}
&\big[\cF A_V(-\kappa^2)^*A_V(-\kappa^2)\cF^{-1}\big](\xi,\xi')\\
&\quad = (2\pi)^{-1} \big(\xi^2+\kappa^2\big)^{-1/2}\no\\
&\qquad \times\int_{\bbR} \Big\{\overline{\widehat{V}(\xi''-\xi)} \big((\xi'')^2+\kappa^2\big)^{-1}\widehat{V}(\xi''-\xi')\big((\xi')^2+\kappa^2\big)^{-1/2}\Big\}\, d\xi'',\quad \xi,\xi'\in \bbR.\no
\end{align}
Hence, one calculates:
\begin{align}
& \big\|A_V(-\kappa^2)\big\|_{\cB_2(L^2(\bbR))}^2 = \tr_{L^2(\bbR)}\big(A_V(-\kappa^2)^*A_V(-\kappa^2) \big)\no\\
& \quad = \tr_{L^2(\bbR)}\big(\cF A_V(-\kappa^2)^*A_V(-\kappa^2)\cF^{-1} \big)\no\\
& \quad = \frac{1}{2\pi} \int_{\bbR}\Bigg\{ \big(\xi^2+\kappa^2\big)^{-1} \int_{\bbR} \frac{\big|\widehat{V}(\eta) \big|^2}{(\eta+\xi)^2+\kappa^2}\, d\eta \Bigg\}\, d\xi\no\\
& \quad = \frac{1}{2\pi} \int_{\bbR}\Bigg\{ \big|\widehat{V}(\eta)\big|^2 \int_{\bbR} \frac{d\xi}{\big[\xi^2+\kappa^2\big]\big[(\eta-\xi)^2+\kappa^2\big]}\Bigg\}\, d\eta 
= \frac{1}{\kappa} \int_{\bbR}\frac{\big|\widehat{V}(\eta) \big|^2}{\eta^2+\kappa^2}\, d\eta.\lb{3.46}
\end{align}
The final equality in \eqref{3.46} follows from Proposition \ref{p3.6}.
\end{proof}

\begin{remark} \lb{r4.9} 
The derivation of \eqref{3.41} given in the proof of Proposition \ref{p3.7} is based on expressing the (square of the) Hilbert--Schmidt norm in terms of a trace (see, \eqref{3.46}).  The trace is then computed in Fourier space by integrating over the diagonal of the corresponding integral kernel.  Alternatively, the following Lemma \ref{l2.9} may be used to compute the trace as follows:
\begin{align}
& \tr_{L^2(\bbR)}\big(A_V(-\kappa^2)^*A_V(-\kappa^2) \big) = \frac{1}{4\kappa^2} \int_{\bbR\times \bbR} \ol{V(x)} e^{-2\kappa|x-x'|}V(x')    \lb{3.48y} \\
& \quad = \frac{1}{4\kappa^2} (2\pi)^{1/2} \int_{\bbR} \big|\widehat{V}(\xi)\big|^2\frac{4\kappa}{4\kappa^2+\xi^2}(2\pi)^{-1/2}\, d\xi 
= \frac{1}{\kappa} \int_{\bbR} \frac{\big|\widehat{V}(\xi)\big|^2}{4\kappa^2+\xi^2} \, d\xi, 
\quad \kappa\in (0,\infty).    \no 
\end{align}
In \eqref{3.48y}, one uses the Fourier transform of $e^{-2\kappa|\dott|}$, $\kappa\in (0,\infty)$, given by \eqref{3.38}.\hfill$\diamond$
\end{remark}

The following chain of results, culminating with Lemma \ref{l2.9}, is used to justify the calculation in \eqref{3.48y}.  We present these results here for completeness.  We recall the convolution theorem for Schwartz functions in the following form (see, e.g., \cite[Theorem IX.3]{RS75}).

\begin{lemma} \lb{lA.2}
If $f,g\in \cS(\bbR)$, then
\begin{equation}
\cF(f\ast g)=(2\pi)^{1/2}(\cF f)\cdot (\cF g)\quad \text{and}\quad \cF^{-1}(f\ast g) = (2\pi)^{1/2} (\cF^{-1}f)\cdot(\cF^{-1}g).
\end{equation}
\end{lemma}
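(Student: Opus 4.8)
The plan is to reduce both identities to a single application of Fubini's theorem, after first observing that every object appearing is a genuine Schwartz function, so that the claimed equalities may be checked pointwise.

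First I would record the relevant closure properties of $\cS(\bbR)$: if $f,g\in \cS(\bbR)$, then $f\ast g\in \cS(\bbR)$ (differentiate under the integral sign, using the rapid decay of $f$ and its derivatives to produce dominating functions, together with $|(f\ast g)(x)|\le \|g\|_{L^1(\bbR)}\sup_{y\in\bbR}|f(x-y)|$ for decay in $x$), while $(\cF f)\cdot(\cF g)\in \cS(\bbR)$ since $\cF$ maps $\cS(\bbR)$ into itself and $\cS(\bbR)$ is closed under pointwise products. Consequently both sides of the first asserted identity lie in $\cS(\bbR)$, and it suffices to prove that their values coincide at every $\xi\in\bbR$.

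Next, fixing $\xi\in\bbR$ and unravelling the definition of $\cF$ in \eqref{2.1}, I would write $\cF(f\ast g)(\xi)=(2\pi)^{-1/2}\int_{\bbR}e^{-ix\xi}\big(\int_{\bbR}f(x-y)g(y)\,dy\big)\,dx$ and interchange the order of integration. This is legitimate by Fubini's theorem, since $(x,y)\mapsto f(x-y)g(y)$ is absolutely integrable on $\bbR^2$: by Tonelli's theorem $\int_{\bbR}\int_{\bbR}|f(x-y)|\,|g(y)|\,dx\,dy=\|f\|_{L^1(\bbR)}\|g\|_{L^1(\bbR)}<\infty$, as $f,g\in\cS(\bbR)\subset L^1(\bbR)$. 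After the interchange and the substitution $u=x-y$ in the inner integral, the exponential factors as $e^{-ix\xi}=e^{-iu\xi}e^{-iy\xi}$; the resulting $u$-integral equals $\int_{\bbR}e^{-iu\xi}f(u)\,du=(2\pi)^{1/2}(\cF f)(\xi)$, a constant (in $y$) which may be pulled out, and the remaining $y$-integral equals $(2\pi)^{1/2}(\cF g)(\xi)$. Collecting the normalization constants yields $\cF(f\ast g)(\xi)=(2\pi)^{-1/2}\cdot(2\pi)^{1/2}(\cF f)(\xi)\cdot(2\pi)^{1/2}(\cF g)(\xi)=(2\pi)^{1/2}(\cF f)(\xi)\,(\cF g)(\xi)$, which is the first identity. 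The second identity follows by the identical computation with $e^{-ix\xi}$ replaced throughout by $e^{ix\xi}$; alternatively, apply the first identity to the reflected functions $x\mapsto f(-x)$, $x\mapsto g(-x)$ and use that reflection commutes with convolution together with $\cF^{-1}h(\xi)=\cF h(-\xi)$.

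There is no genuine obstacle here — this is classical, cf.\ \cite[Theorem IX.3]{RS75} — and the only point demanding care is the bookkeeping of the $(2\pi)^{1/2}$ factors: under the symmetric convention for $\cF$ fixed in \eqref{2.1}, the three half-powers of $2\pi$ produced in the computation collapse to the single factor $(2\pi)^{1/2}$ on the right-hand side, which is precisely the normalization invoked later in \eqref{3.34}--\eqref{3.37} and \eqref{3.48y}.
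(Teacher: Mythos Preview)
Your argument is correct and is the standard proof of the convolution theorem for Schwartz functions. The paper itself does not supply a proof of this lemma; it simply records the statement and refers to \cite[Theorem IX.3]{RS75}, so there is no paper-specific approach to compare against.
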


In addition, we recall that for $f\in L^1(\bbR)$, $\widehat{f}$ is continuous with $\lim_{|\xi|\to\infty}f(\xi)=0$ and
\begin{equation}\lb{A.2}
\big\| \widehat{f}\,\big\|_{L^{\infty}(\bbR)} \leq \|f\|_{L^1(\bbR)},\quad f\in L^1(\bbR).
\end{equation}
The following result is an extension of Lemma \ref{lA.2}.

\begin{theorem} \lb{tA.3}
The following statements hold.\\[1mm]
$(i)$ If $f,g\in L^2(\bbR)$, then $f\ast g\in L^{\infty}(\bbR)$ and
\begin{equation}\lb{A.7}
f\ast g = (2\pi)^{1/2} \cF\big\{(\cF^{-1}f)\cdot(\cF^{-1}g) \big\}.
\end{equation}
$(ii)$ If $f\in L^2(\bbR)$ and $g\in L^1(\bbR)$, then $f\ast g\in L^2(\bbR)$ and
\begin{equation}\lb{A.9z}
\widehat{f\ast g} = (2\pi)^{1/2} \widehat{f}\widehat{g}.
\end{equation}
\end{theorem}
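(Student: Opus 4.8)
The plan is to establish both assertions by approximating $f$ and $g$ by Schwartz functions and reducing everything to the convolution theorem for Schwartz functions (Lemma \ref{lA.2}); the only real work is bookkeeping of which topology each limit is taken in.

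For part $(i)$, I would first observe that for every $x\in\bbR$ one has $\big|(f\ast g)(x)\big| = \big|\int_{\bbR} f(x-y)g(y)\,dy\big| \le \|f\|_{L^2(\bbR)}\|g\|_{L^2(\bbR)}$ by the Cauchy--Schwarz inequality, so $f\ast g\in L^\infty(\bbR)$; likewise, since $\cF^{-1}f,\cF^{-1}g\in L^2(\bbR)$, their product lies in $L^1(\bbR)$ by the same inequality, so the right-hand side of \eqref{A.7} is a well-defined bounded function. Next, choose $\{f_n\}_{n\in\bbN},\{g_n\}_{n\in\bbN}\subset\cS(\bbR)$ with $f_n\to f$ and $g_n\to g$ in $L^2(\bbR)$. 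Writing $f_n\ast g_n - f\ast g = (f_n-f)\ast g_n + f\ast(g_n-g)$ and applying the bound above termwise shows $f_n\ast g_n\to f\ast g$ uniformly; simultaneously $\cF^{-1}f_n\to\cF^{-1}f$ and $\cF^{-1}g_n\to\cF^{-1}g$ in $L^2(\bbR)$, and the same algebraic splitting together with the estimate $\|\vp\psi\|_{L^1(\bbR)}\le\|\vp\|_{L^2(\bbR)}\|\psi\|_{L^2(\bbR)}$ gives $(\cF^{-1}f_n)(\cF^{-1}g_n)\to(\cF^{-1}f)(\cF^{-1}g)$ in $L^1(\bbR)$, hence, by \eqref{A.2}, $\cF\{(\cF^{-1}f_n)(\cF^{-1}g_n)\}\to\cF\{(\cF^{-1}f)(\cF^{-1}g)\}$ uniformly. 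Since \eqref{A.7} holds for each pair $(f_n,g_n)$ by Lemma \ref{lA.2}, passing to the uniform limit on both sides yields \eqref{A.7} in general.

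For part $(ii)$, I would begin with Young's inequality $\|f\ast g\|_{L^2(\bbR)}\le\|f\|_{L^2(\bbR)}\|g\|_{L^1(\bbR)}$, so that $f\ast g\in L^2(\bbR)$; also $\widehat g\in L^\infty(\bbR)$ by \eqref{A.2} and $\widehat f\in L^2(\bbR)$ by Plancherel's theorem, so $\widehat f\,\widehat g\in L^2(\bbR)$ and \eqref{A.9z} is a meaningful identity in $L^2(\bbR)$. Choosing $\{f_n\}_{n\in\bbN}\subset\cS(\bbR)$ with $f_n\to f$ in $L^2(\bbR)$ and $\{g_n\}_{n\in\bbN}\subset\cS(\bbR)$ with $g_n\to g$ in $L^1(\bbR)$, the splitting $f_n\ast g_n-f\ast g=(f_n-f)\ast g_n+f\ast(g_n-g)$ together with Young's inequality and the boundedness of $\{\|g_n\|_{L^1(\bbR)}\}_{n\in\bbN}$ gives $f_n\ast g_n\to f\ast g$ in $L^2(\bbR)$, whence $\widehat{f_n\ast g_n}\to\widehat{f\ast g}$ in $L^2(\bbR)$ by Plancherel's theorem; on the other side $\widehat{f_n}\to\widehat f$ in $L^2(\bbR)$ and $\widehat{g_n}\to\widehat g$ uniformly by \eqref{A.2}, with $\sup_n\|\widehat{g_n}\|_{L^\infty(\bbR)}\le\sup_n\|g_n\|_{L^1(\bbR)}<\infty$, so $\widehat{f_n}\,\widehat{g_n}\to\widehat f\,\widehat g$ in $L^2(\bbR)$. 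Since $\widehat{f_n\ast g_n}=(2\pi)^{1/2}\widehat{f_n}\,\widehat{g_n}$ for every $n$ by Lemma \ref{lA.2}, taking $L^2(\bbR)$-limits proves \eqref{A.9z}.

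The point requiring care is purely topological: recognizing that a product of two $L^2(\bbR)$-convergent sequences converges in $L^1(\bbR)$, and that the Fourier transform must be carried to the limit uniformly in part $(i)$ (via \eqref{A.2}) but only in the $L^2(\bbR)$-sense in part $(ii)$ (via Plancherel). I expect no genuine obstacle; a minor alternative for part $(ii)$ would be to deduce \eqref{A.9z} directly from part $(i)$ by a Parseval/duality argument, but the Schwartz-approximation route above is the most transparent and self-contained.
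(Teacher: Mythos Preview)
Your proposal is correct and follows essentially the same approach as the paper: approximate by Schwartz functions, invoke Lemma \ref{lA.2}, and track the relevant topologies (uniform convergence via \eqref{A.2} in part $(i)$, $L^2$ convergence via Plancherel and Young's inequality in part $(ii)$). The only cosmetic difference is that the paper cites Young's inequality where you invoke Cauchy--Schwarz directly, which amounts to the same thing in this case.
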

\begin{proof}
To prove $(i)$, let $f,g\in L^2(\bbR)$.  Young's inequality with $p=q=2$ and $r=\infty$ implies $f\ast g\in L^{\infty}(\bbR)$.  It remains to prove \eqref{A.7}.  Since $\cF^{-1}f,\cF^{-1}g\in L^2(\bbR)$, H\"older's inequality implies $\big(\cF^{-1}f\big)\cdot\big(\cF^{-1}g \big)\in L^1(\bbR)$.  Thus, $\cF\big\{\big(\cF^{-1}f\big)\cdot\big(\cF^{-1}g \big)\big\}$ is well-defined and belongs to $L^{\infty}(\bbR)$.  To prove the equality in \eqref{A.7}, choose sequences $\{f_j\}_{j=1}^{\infty}, \{g_j\}_{j=1}^{\infty}\subset \cS(\bbR)$ such that
\begin{equation}
\lim_{j\to \infty}\|f_j-f\|_{L^2(\bbR)} = 0 = \lim_{j\to \infty} \|g_j-g\|_{L^2(\bbR)}.
\end{equation}
Applying the inequality in \eqref{A.2}, H\"older's inequality, and the unitary property of $\cF^{-1}$, one obtains
\begin{align}
&\big\|\cF\big\{\big(\cF^{-1}f_j\big)\cdot\big(\cF^{-1}g_j \big)\big\} - \cF\big\{\big(\cF^{-1}f\big)\cdot\big(\cF^{-1}g \big)\big\} \big\|_{L^{\infty}(\bbR)}\no\\
&\quad\leq \big\|\big(\cF^{-1}f_j\big)\cdot\big(\cF^{-1}g_j \big) - \big(\cF^{-1}f\big)\cdot\big(\cF^{-1}g \big) \big\|_{L^1(\bbR)}\no\\
&\quad\leq \|f_j - f\|_{L^2(\bbR)}\|g_j \|_{L^2(\bbR)} + \|f \|_{L^2(\bbR)}\|g_j - g\|_{L^2(\bbR)},\quad j\in \bbN.\lb{A.10}
\end{align}
In particular, \eqref{A.10} implies
\begin{equation}
\lim_{j\to \infty} \big\|\cF\big\{\big(\cF^{-1}f_j\big)\cdot\big(\cF^{-1}g_j \big)\big\} - \cF\big\{\big(\cF^{-1}f\big)\cdot\big(\cF^{-1}g \big)\big\} \big\|_{L^{\infty}(\bbR)} = 0,
\end{equation}
or equivalently, by Lemma \ref{lA.2},
\begin{equation}\lb{A.12}
\lim_{j\to \infty} \big\|(2\pi)^{-n/2}f_j\ast g_j - \cF\big\{\big(\cF^{-1}f\big)\cdot\big(\cF^{-1}g \big)\big\} \big\|_{L^{\infty}(\bbR)} = 0.
\end{equation}
On the other hand,
\begin{align}
&\|(f_j\ast g_j) - (f\ast g) \|_{L^{\infty}(\bbR)}\no\\
&\quad\leq \|(f_j-f)\ast g_j\|_{L^{\infty}(\bbR)} + \|f\ast (g_j - g) \|_{L^{\infty}(\bbR)}\no\\
&\quad\leq \|f_j-f\|_{L^2(\bbR)}\|g\|_{L^2(\bbR)} + \|f\|_{L^2(\bbR)}\|g_j-g\|_{L^2(\bbR)},\quad j\in \bbN,
\end{align}
which implies
\begin{equation}\lb{A.14}
\lim_{j\to \infty} \big\|(2\pi)^{-n/2}(f_j\ast g_j) - (2\pi)^{-n/2}(f\ast g) \big\|_{L^{\infty}(\bbR)}=0.
\end{equation}
By uniqueness of limits, \eqref{A.12} and \eqref{A.14} imply \eqref{A.7}.

The proof of $(ii)$ is entirely analogous to the proof of $(i)$, with only minor modifications required.  For $f\in L^2(\bbR)$ and $g\in L^1(\bbR)$, Young's inequality with $p=2$, $q=1$, and $r=2$ implies $f\ast g\in L^2(\bbR)$.  In particular $\cF(f\ast g)$ is well-defined and belongs to $L^2(\bbR)$.  One chooses $\{f_j\}_{j=1}^{\infty},\{g_j\}_{j=1}^{\infty}\subset \cS(\bbR)$ such that
\begin{equation}
\lim_{j\to \infty}\|f_j-f\|_{L^2(\bbR)} = 0 = \lim_{j\to \infty}\|g_j-g\|_{L^1(\bbR)}.
\end{equation}
Therefore, $f_j\ast g_j \in L^2(\bbR)$, $j\in \bbN$, and
\begin{align}
&\|\cF(f_j\ast g_j) - \cF(f\ast j) \|_{L^2(\bbR)}\no\\
&\quad=\|(f_j\ast g_j) - (f\ast j) \|_{L^2(\bbR)}\no\\
&\quad\leq \|(f_j-f)\ast g_j\|_{L^2(\bbR)} + \|f\ast (g_j-g)\|_{L^2(\bbR)}\lb{A.17}\\
&\quad\leq \|f_j-f\|_{L^2(\bbR)}\|g_j\|_{L^1(\bbR)} + \|f\|_{L^2(\bbR)}\|g_j-g\|_{L^1(\bbR)},\quad j\in \bbN.\no
\end{align}
In addition,
\begin{align}
&\big\|(\cF f_j)(\cF g_j)-(\cF f)(\cF g) \big\|_{L^2(\bbR)}     \no \\
&\quad\leq \big\|\big(\cF f_j-\cF f\big)(\cF g_j) \big\|_{L^2(\bbR)} + \big\|(\cF f)(\cF g_j-\cF g) \big\|_{L^2(\bbR)}\no\\
&\quad\leq \big\|f_j-f \big\|_{L^2(\bbR)}\|g_j \|_{L^1(\bbR)} + \| f\|_{L^2(\bbR)}\big\|g_j-g \big\|_{L^1(\bbR)},\quad j\in \bbN.    \lb{A.18} 
\end{align}
Theorem \ref{lA.2} implies $\cF(f_j\ast g_j) = (2\pi)^{1/2} (\cF f_j)\cdot (\cF g_j)$, $j\in \bbN$.  Therefore, \eqref{A.17} and \eqref{A.18}, combined with uniqueness of limits, yield \eqref{A.9z}.
\end{proof}

The following result is useful for calculating certain double integrals in Fourier space (cf., e.g., \cite[p.~13]{Si71}).

\begin{lemma}\lb{l2.9}
If $f,g,\in L^2(\bbR)$ and $h\in L^1(\bbR)$, then
\begin{equation}
\int_{\bbR\times \bbR} \ol{f(x)}h(x-x')g(x')\, dx\, dx' = (2\pi)^{1/2} \int_{\bbR}\ol{\widehat{f}(\xi)}\widehat{h}(\xi)\widehat{g}(\xi)\, d\xi.
\end{equation}
\end{lemma}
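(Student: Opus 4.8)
The plan is to recognize the left-hand side as the $L^2(\bbR)$-pairing of $f$ with the convolution $h\ast g$, and then to combine the convolution theorem from Theorem \ref{tA.3}\,$(ii)$ with Parseval's identity.

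First I would justify rewriting the iterated integral as $(f,h\ast g)_{L^2(\bbR)}$. Integrating the modulus $\big|\ol{f(x)}\,h(x-x')\,g(x')\big|$ in $x'$ first yields $|f(x)|\,\big(|h|\ast|g|\big)(x)$, and then the Cauchy--Schwarz inequality followed by Young's inequality (with exponents $p=2$, $q=1$, $r=2$) gives the bound $\|f\|_{L^2(\bbR)}\,\big\||h|\ast|g|\big\|_{L^2(\bbR)}\leq \|f\|_{L^2(\bbR)}\|h\|_{L^1(\bbR)}\|g\|_{L^2(\bbR)}<\infty$. Hence the double integral converges absolutely, Fubini's theorem applies, and the left-hand side equals $\int_{\bbR}\ol{f(x)}\,(h\ast g)(x)\,dx=(f,h\ast g)_{L^2(\bbR)}$, where $(h\ast g)(x)=\int_{\bbR}h(x-x')g(x')\,dx'\in L^2(\bbR)$ by Theorem \ref{tA.3}\,$(ii)$ (equivalently, by Young's inequality once more).

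Next, since $g\in L^2(\bbR)$ and $h\in L^1(\bbR)$, applying Theorem \ref{tA.3}\,$(ii)$ with the roles of the two functions interchanged and using commutativity of convolution ($h\ast g=g\ast h$) gives $\widehat{h\ast g}=(2\pi)^{1/2}\,\widehat{h}\,\widehat{g}$ in $L^2(\bbR)$; note that $\widehat{h}\in L^{\infty}(\bbR)$ by \eqref{A.2}, so $\widehat{h}\,\widehat{g}\in L^2(\bbR)$ indeed. Parseval's identity (unitarity of the Fourier transform on $L^2(\bbR)$, already used in the proof of Theorem \ref{tA.3}) then yields
\[
(f,h\ast g)_{L^2(\bbR)}=\big(\widehat{f},\widehat{h\ast g}\,\big)_{L^2(\bbR)}=(2\pi)^{1/2}\int_{\bbR}\ol{\widehat{f}(\xi)}\,\widehat{h}(\xi)\,\widehat{g}(\xi)\,d\xi,
\]
which is the asserted identity. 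The final integrand lies in $L^1(\bbR)$ because $\ol{\widehat f}\,\widehat g\in L^1(\bbR)$ by Cauchy--Schwarz and $\widehat h\in L^{\infty}(\bbR)$.

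I do not anticipate any serious obstacle: the only points requiring care are the absolute-integrability estimate that licenses Fubini's theorem (handled by Young's inequality) and the bookkeeping of the normalization constant $(2\pi)^{1/2}$, which is dictated by the Fourier transform convention of the paper and propagated through Theorem \ref{tA.3}\,$(ii)$.
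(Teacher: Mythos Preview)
Your proposal is correct and follows essentially the same route as the paper: rewrite the double integral as $(f,h\ast g)_{L^2(\bbR)}$, apply Parseval's identity, and then invoke Theorem~\ref{tA.3}\,$(ii)$ to get $\widehat{h\ast g}=(2\pi)^{1/2}\widehat{h}\,\widehat{g}$. The paper's proof is more terse and does not spell out the Fubini justification via Young's inequality or the fact that $\widehat h\in L^\infty(\bbR)$, but the argument is the same.
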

\begin{proof}
Under the hypothesis on $f$, $g$, and $h$, the convolution theorem (Theorem \ref{tA.3} $(ii)$) implies:
\begin{align}
\begin{split} 
& \int_{\bbR\times \bbR} \ol{f(x)}h(x-x')g(x')\, dx\, dx' = \int_{\bbR} \ol{f(x)}(h\ast g)(x)\, dx\\
& \quad = \int_{\bbR} \ol{\widehat{f}(\xi)}\widehat{(h\ast g)}(\xi)\, d\xi 
= (2\pi)^{1/2} \int_{\bbR}\ol{\widehat{f}(\xi)}\widehat{h}(\xi)\widehat{g}(\xi)\, d\xi.   
\end{split} 
\end{align}
\end{proof}

\appendix
\section{Some Basic Facts on Sobolev Multipliers} \lb{s0}

In this section, we recall some fundamental results on Sobolev multipliers. The Schwartz space of rapidly decreasing functions on $\bbR$ is denoted by $\cS(\bbR)$, and $\cS'(\bbR)$ denotes the corresponding set of tempered distributions.  The convention employed for the Fourier transform is:
\begin{equation}\lb{2.1}
\begin{split}
(\cF f)(\xi)\equiv \widehat f (\xi) = (2\pi)^{-1/2} \int_{\bbR} e^{-i\xi\cdot x} f(x)\, dx\, \text{ for a.e.~$\xi\in \bbR$},\\
\big(\cF^{-1} f\big)(x)\equiv \widecheck f (x) = (2\pi)^{-1/2} \int_{\bbR} e^{ix\cdot \xi} f(\xi)\, d\xi\, \text{ for a.e.~$x\in \bbR$}.
\end{split}
\end{equation}
In addition, the convolution is defined according to
\begin{equation}\lb{3.2c}
(f \ast g)(x) = \int_{\bbR} f(y)g(x-y)\, dy = \int_{\bbR} f(x-y)g(y)\, dy,
\end{equation}
and Young's inequality holds in the following form.

\begin{theorem}[Young's Inequality]
Let $1\leq p,q,r\leq \infty$ with $p^{-1}+q^{-1} = 1 + r^{-1}$.  If $f\in L^p(\bbR)$ and $g\in L^q(\bbR)$ then $f\ast g\in L^r(\bbR)$ and
\begin{equation}
\|f\ast g\|_{L^r(\bbR)} \leq \|f\|_{L^p(\bbR)}\|g\|_{L^q(\bbR)}.
\end{equation}
\end{theorem}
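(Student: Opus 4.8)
The plan is to give the classical direct proof of Young's inequality built on a three-exponent H\"older inequality, rather than deducing it from Riesz--Thorin interpolation, so as to keep the appendix self-contained. First I would dispose of the boundary cases. If $r=\infty$, then $p^{-1}+q^{-1}=1$, so $p$ and $q$ are conjugate, and for each $x\in\bbR$ H\"older's inequality applied to $y\mapsto |f(x-y)|\,|g(y)|$ gives $|(f\ast g)(x)|\le \|f\|_{L^p(\bbR)}\|g\|_{L^q(\bbR)}$, whence $f\ast g\in L^\infty(\bbR)$ with the stated bound. If $p=1$, then the hypothesis forces $q=r$ and the claim is Minkowski's integral inequality $\|f\ast g\|_{L^q(\bbR)}\le \|f\|_{L^1(\bbR)}\|g\|_{L^q(\bbR)}$; the case $q=1$ is symmetric. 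In the remaining range one checks, using $p^{-1}+q^{-1}=1+r^{-1}$, that $1<p,q<r<\infty$, so the three numbers
\[
r,\qquad s:=\frac{pr}{r-p},\qquad t:=\frac{qr}{r-q}
\]
all lie in $(1,\infty)$ and satisfy $r^{-1}+s^{-1}+t^{-1}=1$ (a one-line computation from the hypothesis).

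Next I would reduce to $f,g\ge 0$ by replacing $f,g$ with $|f|,|g|$; then every integral below is a well-defined element of $[0,\infty]$ by Tonelli's theorem, and the inequality obtained for $|f|\ast|g|$ yields simultaneously the a.e.\ absolute convergence of $\int_{\bbR} f(x-y)g(y)\,dy$ (hence $f\ast g\in L^r(\bbR)$) and the asserted norm bound. The algebraic heart of the argument is the pointwise factorization
\[
f(x-y)g(y)=\big(f(x-y)^p g(y)^q\big)^{1/r}\cdot f(x-y)^{1-p/r}\cdot g(y)^{1-q/r},
\]
to which I apply H\"older's inequality in the variable $y$ with exponents $r,s,t$. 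Since $(1-p/r)s=p$ and $(1-q/r)t=q$, this produces
\[
(f\ast g)(x)\le \Big(\int_{\bbR} f(x-y)^p g(y)^q\,dy\Big)^{1/r}\,\|f\|_{L^p(\bbR)}^{\,1-p/r}\,\|g\|_{L^q(\bbR)}^{\,1-q/r}.
\]

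Finally I would raise this to the $r$-th power, integrate in $x$, and use Tonelli together with the translation invariance of Lebesgue measure to evaluate
\[
\int_{\bbR}\int_{\bbR} f(x-y)^p g(y)^q\,dy\,dx=\|f\|_{L^p(\bbR)}^p\,\|g\|_{L^q(\bbR)}^q.
\]
Since the powers of $\|f\|_{L^p(\bbR)}$ combine as $(r-p)+p=r$ and those of $\|g\|_{L^q(\bbR)}$ as $(r-q)+q=r$, this gives $\|f\ast g\|_{L^r(\bbR)}^r\le \|f\|_{L^p(\bbR)}^r\|g\|_{L^q(\bbR)}^r$, which is the claim. The only point requiring any care---the ``main obstacle,'' such as it is---is the exponent bookkeeping: verifying $r^{-1}+s^{-1}+t^{-1}=1$ and $(1-p/r)s=p$, $(1-q/r)t=q$, and confirming that the degenerate values $p=r$ or $q=r$ cannot occur in the main case (they are precisely the cases $q=1$, respectively $p=1$, already handled separately). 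Everything else is a routine application of H\"older's inequality, Tonelli's theorem, and translation invariance.
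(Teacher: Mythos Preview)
Your proof is correct and is the standard direct argument via the three-exponent H\"older inequality. The paper, however, does not prove Young's inequality at all: the theorem is simply stated in the appendix as a recalled result, with no proof provided. So there is nothing to compare against; your self-contained argument goes beyond what the paper offers.
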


One notes that Young's inequality implies the following inclusions:
\begin{align}
L^1(\bbR)\ast L^1(\bbR)\subset L^1(\bbR),  \quad 
L^1(\bbR)\ast L^2(\bbR)\subset L^2(\bbR),  \quad 
L^2(\bbR)\ast L^2(\bbR)\subset L^{\infty}(\bbR).\no
\end{align}

For each $s\in \bbR$ and $p\in [1,\infty)$, the space of Bessel potentials $H^{s,p}(\bbR)$ is introduced as follows:
\begin{equation}
H^{s,p}(\bbR) = \bigg\{T\in \cS'(\bbR)\,\bigg|\, \Big[\big(1+|\cdot|^2\big)^{s/2}\widehat{T}\,\Big]^{\vee}
\in L^p(\bbR)\bigg\},\quad s\in \bbR,
\end{equation}
where the symbols\; $\widehat{}$\; and\; $\widecheck{}$\; denote the Fourier and inverse Fourier transforms of the distributions involved and $\big(1+|\cdot|^2\big)^{s/2}$ denotes the operator of multiplication by the smooth function $\bbR \ni x\mapsto \big(1+|x|^2\big)^{s/2}$.  The space $H^{s,p}(\bbR)$ is equipped with the norm $\|\cdot\|_{H^{s,p}(\bbR)}$ defined by
\begin{equation}
\|T\|_{H^{s,p}(\bbR)} = \bigg\|\Big[\big(1+|\cdot|^2\big)^{s/2}\widehat{T}\,\Big]^{\vee}\bigg\|_{L^p(\bbR)},\quad T\in H^{s,p}(\bbR),\, s\in \bbR.
\end{equation}

In the special case $p=2$, the shorthand notation $H^s(\bbR):=H^{s,2}(\bbR)$ will be used. Letting $H_0$ denote the self-adjoint realization of $-d^2/dx^2$ in $L^2(\bbR)$; that is,
\begin{equation}\lb{3.12c}
H_0f = - f'',\quad f\in \dom(H_0) = H^2(\bbR),
\end{equation}
so that $\dom\big(H_0^{1/2}\big) = H^1(\bbR)$, one verifies, upon comparison with the notation used in Section \ref{s2} (cf.\ \eqref{5.1}, \eqref{2.11}), that 
\begin{equation}
H^s(\bbR) = \cH_s\big(H_0^{1/2}\big), \quad s \in \bbR.
\end{equation}

 If $s>0$ and $f\in H^s(\bbR)$, then $\big(1+|\cdot|^2\big)^{s/2}\widehat{f}\in L^2(\bbR)$, so there exists $g\in L^2(\bbR)$ such that
\begin{equation}
{}_{\cS(\bbR)}\Big\langle \varphi, \big(1+|\cdot|^2\big)^{s/2}\widehat{f}\,\Big\rangle{}_{\cS'(\bbR)}= \int_{\bbR}\overline{\varphi(x)}g(x)\, dx,\quad \varphi\in \cS(\bbR).
\end{equation}
Furthermore, $\big(1+|\cdot|^2\big)^{-s/2}g\in L^2(\bbR)$ and
\begin{align}
\int_{\bbR} \overline{\varphi(x)}g(x) \big(1+|x|^2\big)^{-s/2}\, dx&= {}_{\cS(\bbR)}
\Big\langle \big(1+|\cdot|^2\big)^{-s/2}\varphi, \big(1+|\cdot|^2\big)^{s/2}\widehat{f}\,\Big\rangle{}_{\cS'(\bbR)}\no\\
&= {}_{\cS(\bbR)}\big\langle \varphi, \widehat{f}\,\big\rangle{}_{\cS'(\bbR)},\quad \varphi\in \cS(\bbR),
\end{align}
so that $\widehat{f} = \big(1+|\cdot|^2\big)^{-s/2}g\in L^2(\bbR)$ and, consequently, $f\in L^2(\bbR)$.  In addition,
\begin{align}
\|f\|_{L^2(\bbR)} = \big\|\widehat{f}\,\big\|_{L^2(\bbR)} \leq \Big\|\big(1+|\cdot|^2\big)^{s/2}\widehat{f}\,\Big\|_{L^2(\bbR)} = \|f\|_{H^s(\bbR)}.
\end{align}
Hence, one obtains the dense and continuous embedding 
\begin{equation}
H^s(\bbR) \hookrightarrow L^2(\bbR),\quad s\in (0,\infty),
\end{equation}
with the inequality
\begin{equation}
\|f\|_{L^2(\bbR)} \leq \|f\|_{H^s(\bbR)},\quad f\in H^s(\bbR).
\end{equation}

\begin{proposition}\lb{p2.1}
Let $n\in \bbN$.  If $\kappa\in (0,\infty)$, then
\begin{equation}
\big(H_0 + \kappa^2I_{L^2(\bbR)}\big)^{-1/2} \in \cB\big(H^{-1}(\bbR),L^2(\bbR)\big).
\end{equation}
\end{proposition}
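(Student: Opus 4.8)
The plan is to pass to the Fourier side, where $H_0$ acts as multiplication by $\xi^2$ and the Bessel potential space $H^{-1}(\bbR)$ becomes a weighted $L^2$-space, thereby reducing the assertion to a one-line pointwise multiplier bound. Recall that $\cF H_0 \cF^{-1}$ is the maximal operator of multiplication by $\xi\mapsto \xi^2$ in $L^2(\bbR)$, so that, by the functional calculus, $\cF \big(H_0+\kappa^2 I_{L^2(\bbR)}\big)^{-1/2} \cF^{-1}$ is multiplication by $\big(\xi^2+\kappa^2\big)^{-1/2}$ in $L^2(\bbR)$. On the other hand, by the definition of $H^{-1}(\bbR)=H^{-1,2}(\bbR)$ recalled above, a tempered distribution $T$ lies in $H^{-1}(\bbR)$ if and only if $\widehat T$ is (a.e.\ equal to) a measurable function with $\int_{\bbR}\big(1+\xi^2\big)^{-1}\big|\widehat T(\xi)\big|^2\,d\xi<\infty$, and then, by Plancherel's theorem, $\|T\|_{H^{-1}(\bbR)}^2=\int_{\bbR}\big(1+\xi^2\big)^{-1}\big|\widehat T(\xi)\big|^2\,d\xi$. (The index $n\in\bbN$ in the statement plays no role here; the setting is genuinely one-dimensional.)

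Given $T\in H^{-1}(\bbR)$, the element $\big(H_0+\kappa^2 I_{L^2(\bbR)}\big)^{-1/2}T$ is the tempered distribution with Fourier transform $\big(\xi^2+\kappa^2\big)^{-1/2}\widehat T$. Writing
\[
\big(\xi^2+\kappa^2\big)^{-1/2}\widehat T(\xi)=\frac{\big(1+\xi^2\big)^{1/2}}{\big(\xi^2+\kappa^2\big)^{1/2}}\cdot\big(1+\xi^2\big)^{-1/2}\widehat T(\xi),\quad \xi\in\bbR,
\]
it suffices to bound the multiplier $m_\kappa(\xi):=\big(1+\xi^2\big)^{1/2}\big(\xi^2+\kappa^2\big)^{-1/2}$. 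Since $t\mapsto (1+t)/(\kappa^2+t)$ is monotone on $[0,\infty)$ (decreasing if $\kappa<1$, increasing if $\kappa>1$), with limiting value $1$ at $t=\infty$ and value $\kappa^{-2}$ at $t=0$, one gets $\sup_{\xi\in\bbR}m_\kappa(\xi)^2=\max\{1,\kappa^{-2}\}$, i.e.\ $\|m_\kappa\|_{L^\infty(\bbR)}=\max\{1,\kappa^{-1}\}$. Hence $\big(\xi^2+\kappa^2\big)^{-1/2}\widehat T\in L^2(\bbR)$ with $L^2$-norm at most $\max\{1,\kappa^{-1}\}\|T\|_{H^{-1}(\bbR)}$, and Plancherel's theorem gives $\big(H_0+\kappa^2 I_{L^2(\bbR)}\big)^{-1/2}T\in L^2(\bbR)$ together with
\[
\big\|\big(H_0+\kappa^2 I_{L^2(\bbR)}\big)^{-1/2}T\big\|_{L^2(\bbR)}\le \max\{1,\kappa^{-1}\}\,\|T\|_{H^{-1}(\bbR)},\quad T\in H^{-1}(\bbR),
\]
which is the claimed boundedness (with explicit operator norm $\le\max\{1,\kappa^{-1}\}$).

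The only point requiring a word is that the multiplication-by-$\big(\xi^2+\kappa^2\big)^{-1/2}$ operator on $H^{-1}(\bbR)$ used above coincides with the abstract extension of $\big(H_0+\kappa^2 I_{L^2(\bbR)}\big)^{-1/2}\in\cB(L^2(\bbR))$ constructed in Proposition \ref{p5.2} (equivalently, that $\cH_{-1}(H_0)$ and $H^{-1}(\bbR)$ agree as recorded in Appendix \ref{s0}). This is immediate: both operators restrict on $L^2(\bbR)\subset H^{-1}(\bbR)$ to the functional-calculus operator $\big(H_0+\kappa^2 I_{L^2(\bbR)}\big)^{-1/2}$, and $\cS(\bbR)$, hence $L^2(\bbR)$, is dense in $H^{-1}(\bbR)$, so a bounded extension to $H^{-1}(\bbR)$ is unique. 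I do not expect any genuine obstacle; the entire content of the proposition is the elementary estimate $\|m_\kappa\|_{L^\infty(\bbR)}=\max\{1,\kappa^{-1}\}$, and one could alternatively deduce the statement (without the explicit constant) directly from Proposition \ref{p5.2}.
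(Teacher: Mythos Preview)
Your proof is correct and is essentially identical to the paper's own argument: pass to the Fourier side, factor $(\xi^2+\kappa^2)^{-1/2}\widehat{T}=m_\kappa(\xi)\,(1+\xi^2)^{-1/2}\widehat{T}$, and bound the multiplier $m_\kappa$ in $L^\infty(\bbR)$, obtaining the same explicit constant $C(\kappa)=\max\{1,\kappa^{-1}\}$. The additional remark reconciling the Fourier-side definition with the abstract extension of Proposition~\ref{p5.2} is a nice touch, though the paper simply works directly with the Fourier multiplier definition throughout the appendix.
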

\begin{proof}
In order to first show that $\big(H_0 + \kappa^2I_{L^2(\bbR)}\big)^{-1/2}$ maps from $H^{-1}(\bbR)$ to $L^2(\bbR)$, let $f\in H^{-1}(\bbR)$ so that $f\in \cS'(\bbR)$ and $\big(1+|\cdot|^2\big)^{-1/2}\widehat{f}\in L^2(\bbR)$.  Then
\begin{align}
\begin{split} 
&\Big[\big(H_0+\kappa^2I_{L^2(\bbR)}\big)^{-1/2}f\Big]\;\widehat{} 
 = \big(|\cdot|^2+\kappa^2\big)^{-1/2}\widehat{f}    \\
&\quad = \big(|\cdot|^2+\kappa^2\big)^{-1/2} \big(|\cdot|^2+1\big)^{1/2} \big(|\cdot|^2+1\big)^{-1/2}\widehat{f} \in L^2(\bbR),
\end{split} 
\end{align}
where the $L^2(\bbR)$ containment follows from the fact that $\big(|\dott|^2+1\big)^{-1/2}\widehat{f} \in L^2(\bbR)$ and the (smooth) function $\bbR\ni x\mapsto \big(|x|^2+\kappa^2\big)^{-1/2} 
\big(|x|^2+1\big)^{1/2}$ is bounded.  Therefore, $\big(H_0+\kappa^2I_{L^2(\bbR)}\big)^{-1/2}f\in L^2(\bbR)$ follows by taking inverse Fourier transforms.  The boundedness property is a consequence of Plancherel's theorem and the definition of $\|\cdot\|_{H^{-1}(\bbR)}$:
\begin{align}
\begin{split} 
& \Big\|\big(H_0+\kappa^2I_{L^2(\bbR)}\big)^{-1/2}f\Big\|_{L^2(\bbR)} = \Big\|\big(|\cdot|^2+\kappa^2\big)^{-1/2}\widehat{f}\,\Big\|_{L^2(\bbR)}     \\
& \quad \leq C(\kappa)\Big\|\big(|\cdot|^2+1\big)^{-1/2}\widehat{f}\,\Big\|_{L^2(\bbR)} 
= C(\kappa)\|f\|_{H^{-1}(\bbR)},\quad f\in H^{-1}(\bbR),
\end{split} 
\end{align}
where
\begin{equation}\lb{2.10}
C(\kappa):=\Big\|\big(|\cdot|^2+\kappa^2\big)^{-1/2} \big(|\cdot|^2+1\big)^{1/2}\Big\|_{L^{\infty}(\bbR)}=
\begin{cases}
\kappa^{-1},&\kappa\in (0,1),\\
1,&\kappa\in [1,\infty).
\end{cases}
\end{equation}
\end{proof}

\begin{proposition}\lb{p2.2}
Let $n\in \bbN$.  If $\kappa\in (0,\infty)$, then
\begin{equation}
\big(H_0 + \kappa^2I_{L^2(\bbR)}\big)^{-1/2} \in \cB\big(L^2(\bbR),H^1(\bbR)\big).
\end{equation}
\end{proposition}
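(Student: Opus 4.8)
The plan is to run the Fourier-transform argument in exact parallel with the proof of Proposition \ref{p2.1} — Proposition \ref{p2.2} is its dual counterpart — the only inputs being Plancherel's theorem and the boundedness of one elementary Fourier multiplier. (As in Proposition \ref{p2.1}, the parameter $n \in \bbN$ in the statement is inert, the ambient space being $\bbR$.)

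First I would fix $f \in L^2(\bbR)$ and use that the Fourier transform diagonalizes $H_0$ with symbol $|\cdot|^2$, so that
\[
\Big[\big(H_0 + \kappa^2 I_{L^2(\bbR)}\big)^{-1/2} f\Big]\;\widehat{}\; = \;\big(|\cdot|^2 + \kappa^2\big)^{-1/2}\widehat{f}.
\]
Since $\widehat{f} \in L^2(\bbR)$ by Plancherel's theorem and the (smooth) function $\bbR \ni x \mapsto \big(|x|^2 + 1\big)^{1/2}\big(|x|^2 + \kappa^2\big)^{-1/2}$ is bounded, the product $\big(|\cdot|^2 + 1\big)^{1/2}\big(|\cdot|^2 + \kappa^2\big)^{-1/2}\widehat{f}$ belongs to $L^2(\bbR)$; by the definition of $H^1(\bbR) = H^{1,2}(\bbR)$ this says precisely that $\big(H_0 + \kappa^2 I_{L^2(\bbR)}\big)^{-1/2} f \in H^1(\bbR)$. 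For the boundedness I would then read off, from the definition of $\|\cdot\|_{H^1(\bbR)}$ and Plancherel's theorem,
\begin{align*}
\Big\|\big(H_0 + \kappa^2 I_{L^2(\bbR)}\big)^{-1/2} f\Big\|_{H^1(\bbR)}
&= \Big\|\big(|\cdot|^2 + 1\big)^{1/2}\big(|\cdot|^2 + \kappa^2\big)^{-1/2}\widehat{f}\,\Big\|_{L^2(\bbR)}\\
&\leq C(\kappa)\,\big\|\widehat{f}\,\big\|_{L^2(\bbR)} = C(\kappa)\,\|f\|_{L^2(\bbR)},
\end{align*}
where $C(\kappa) = \big\|\big(|\cdot|^2+\kappa^2\big)^{-1/2}\big(|\cdot|^2+1\big)^{1/2}\big\|_{L^{\infty}(\bbR)}$ is exactly the constant already identified in \eqref{2.10} in the proof of Proposition \ref{p2.1}, namely $\kappa^{-1}$ for $\kappa \in (0,1)$ and $1$ for $\kappa \in [1,\infty)$ — the ratio $(|x|^2+1)/(|x|^2+\kappa^2)$ being monotone in $|x|^2$, so that its supremum is attained at $x = 0$ when $\kappa < 1$ and in the limit $|x| \to \infty$ when $\kappa \geq 1$.

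There is no real obstacle here: everything reduces to the boundedness of one fixed Fourier multiplier, and the bookkeeping for $C(\kappa)$ duplicates the computation already carried out for Proposition \ref{p2.1}. If anything merits a word of care, it is only the observation that the multiplier $\big(|\cdot|^2 + 1\big)^{1/2}\big(|\cdot|^2 + \kappa^2\big)^{-1/2}$, which in Proposition \ref{p2.1} transports the ``$H^{-1}$-weighted'' data of $f$ to $L^2(\bbR)$, now — read the other way — transports $L^2(\bbR)$ data to the ``$H^1$-weighted'' side; no new estimate is needed.
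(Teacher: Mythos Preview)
Your proof is correct and follows essentially the same approach as the paper's own proof: diagonalize $H_0$ via the Fourier transform, observe that the multiplier $\big(|\cdot|^2+1\big)^{1/2}\big(|\cdot|^2+\kappa^2\big)^{-1/2}$ is bounded, and apply Plancherel's theorem to obtain the norm bound with the same constant $C(\kappa)$ from \eqref{2.10}. Your extra remark on where the supremum of the multiplier is attained is a nice bit of detail the paper omits.
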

\begin{proof}
To first show that $\big(H_0 + \kappa^2I_{L^2(\bbR)}\big)^{-1/2}$ maps from $L^2(\bbR)$ to $H^1(\bbR)$, let $f\in L^2(\bbR)$.  One then calculates
\begin{align}
& \big(|\cdot|^2+1\big)^{1/2}\Big[\big(H_0+\kappa^2I_{L^2(\bbR)}\big)^{-1/2}f\Big]\;\widehat{}\no\\
&\quad = \big(|\cdot|^2+1\big)^{1/2} \big(|\cdot|^2+\kappa^2\big)^{-1/2}\widehat{f}\in L^2(\bbR),
\end{align}
where the $L^2(\bbR)$ containment follows from the fact that $\widehat{f}\in L^2(\bbR)$ and the function $\bbR\ni x\mapsto \big(|x|^2+1\big)^{1/2} \big(|x|^2+\kappa^2\big)^{-1/2}$ is bounded.  Therefore, the inclusion $\big(H_0 + \kappa^2I_{L^2(\bbR)}\big)^{-1/2}f\in H^1(\bbR)$ holds.   The boundedness property follows from (repeated use of) Plancherel's theorem:
\begin{align}
& \Big\|\big(H_0 + \kappa^2I_{L^2(\bbR)}\big)^{-1/2}f \Big\|_{H^1(\bbR)} = \Big\|\big(|\cdot|^2+1\big)^{1/2} \big(|\cdot|^2+\kappa^2\big)^{-1/2}\widehat{f}\, \Big\|_{L^2(\bbR)}\no\\
&\quad \leq C(\kappa)\big\|\widehat{f}\,\big\|_{L^2(\bbR)} = C(\kappa)\|f\|_{L^2(\bbR)},\quad f\in L^2(\bbR),
\end{align}
with $C(\kappa)$ defined by \eqref{2.10}.
\end{proof}

\begin{remark}
With only minor modification, the same arguments used to prove Propositions \ref{p2.1} and \ref{p2.2} show that
\begin{equation}\lb{2.18aa}
\begin{split}
&\big(H_0 - zI_{L^2(\bbR)}\big)^{-1/2} \in \cB\big(H^{-1}(\bbR),L^2(\bbR)\big),\\
&\big(H_0 - zI_{L^2(\bbR)}\big)^{-1/2} \in \cB\big(L^2(\bbR),H^1(\bbR)\big),\quad z\in \bbC\backslash[0,\infty).
\end{split}
\end{equation}
In particular, 
\begin{equation}
H^1(\bbR) \hookrightarrow L^2(\bbR) = L^2(\bbR)^* \hookrightarrow H^{-1}(\bbR) = \big[H^1(\bbR)\big]^*. 
\end{equation}
\end{remark}

Next, we recall the concept of a Sobolev multiplier from $H^1(\bbR)$ to $H^{-1}(\bbR)$.  If $V\in \cS'(\bbR)$, then $V$ is a multiplier for $\cS(\bbR)$ in the following sense:  If $\varphi\in \cS(\bbR)$, then $V\varphi\in \cS'(\bbR)$, where $V\varphi$ is defined according to
\begin{align}\lb{2.18a}
{}_{\cS(\bbR)}\langle \psi, V\varphi \rangle_{\cS'(\bbR)} := {}_{\cS(\bbR)}\langle \overline{\varphi}\psi, V\rangle_{\cS'(\bbR)},\quad \psi \in \cS(\bbR).
\end{align}
In particular, $\mathfrak{q}_V$ defined by
\begin{equation}\lb{2.19aaaa}
\mathfrak{q}_V[\psi,\varphi] := {}_{\cS(\bbR)}\langle \psi, V\varphi \rangle_{\cS'(\bbR)},\quad \varphi,\psi \in \cS(\bbR),
\end{equation}
is a well-defined sesquilinear form on $\cS(\bbR)$.

\begin{definition}\lb{def_multiplier}
The tempered distribution $V\in \cS'(\bbR)$ is called a Sobolev multiplier from $H^1(\bbR)$ to $H^{-1}(\bbR)$ if $\mathfrak{q}_V$, defined on $\cS(\bbR)$ by \eqref{2.19aaaa}, continuously extends from $\cS(\bbR)$ to $H^1(\bbR)$; that is, if
\begin{equation}\lb{2.18}
|\mathfrak{q}_V[\psi,\varphi]| \leq C_V \|\psi\|_{H^1(\bbR)}\|\varphi\|_{H^1(\bbR)},\quad \varphi,\psi\in \cS(\bbR),
\end{equation}
for some $C_V\in (0,\infty)$.
\end{definition}

One notes that when \eqref{2.18} holds, $\mathfrak{q}_V$ may be extended to a bounded sesquilinear form, which we denote by $\widetilde{\mathfrak{q}}_V$, on $H^1(\bbR)$ by taking
\begin{equation}\lb{2.19}
\widetilde{\mathfrak{q}}_V[g,f] := \lim_{n\to \infty} \mathfrak{q}_V[\psi_n,\varphi_n],\quad f,g\in H^1(\bbR),
\end{equation}
where $\{\varphi_n\}_{n=1}^{\infty},\{\psi_n\}_{n=1}^{\infty}\subset \cS(\bbR)$ are any sequences such that
\begin{equation}
\lim_{n\to \infty}\|\varphi_n-f\|_{H^1(\bbR)} = \lim_{n\to \infty}\|\psi_n-g\|_{H^1(\bbR)} = 0.
\end{equation}
The definition in \eqref{2.19} is independent of the sequences $\{\varphi_n\}_{n=1}^{\infty},\{\psi_n\}_{n=1}^{\infty}\subset \cS(\bbR)$ chosen and \eqref{2.18} remains valid for the extension:
\begin{equation}\lb{2.21}
|\widetilde{\mathfrak{q}}_V[g,f]| \leq C_V \|g\|_{H^1(\bbR)}\|f\|_{H^1(\bbR)},\quad f,g\in H^1(\bbR).
\end{equation}
In the following, the symbol $\cM\big(H^1(\bbR),H^{-1}(\bbR)\big)$ denotes the set of all Sobolev multipliers from $H^1(\bbR)$ to $H^{-1}(\bbR)$.

If $V\in \cM\big(H^1(\bbR),H^{-1}(\bbR)\big)$, then the distributional pairing on the left-hand side in \eqref{2.18a} extends to $H^1(\bbR)$ as follows:
\begin{equation}
{}_{H^1(\bbR)}\langle g,Vf\rangle{}_{H^{-1}(\bbR)} = \widetilde{\mathfrak{q}}_V[g,f],\quad f,g\in H^1(\bbR).
\end{equation}
The multiplier operator defined by
\begin{equation}
H^1(\bbR)\ni f \mapsto Vf\in H^{-1}(\bbR)
\end{equation}
is linear and bounded by \eqref{2.21} since
\begin{equation}\lb{2.25a}
\|Vf\|_{H^{-1}(\bbR)} = \sup_{g\in H^1(\bbR)\backslash\{0\}}\frac{|\wti{\mathfrak{q}}_V[g,f]|}{\|g\|_{H^1(\bbR)}} \leq C_V\|f\|_{H^1(\bbR)},\quad f\in H^1(\bbR).
\end{equation}
In particular, the operator of multiplication by $V$ is bounded from $H^1(\bbR)$ to $H^{-1}(\bbR)$ with
\begin{equation}\lb{2.26a}
\|V\|_{\cB(H^1(\bbR),H^{-1}(\bbR))}\leq C_V.
\end{equation}
We note that, in a slight abuse of notation, we use the symbol ``$V$'' in \eqref{2.26a} (and elsewhere) to denote the operator of multiplication by $V\in \cM\big(H^1(\bbR),H^{-1}(\bbR)\big)$.

Every $V\in H^{-1}(\bbR)$ is a Sobolev multiplier from $H^1(\bbR)$ to $H^{-1}(\bbR)$; that is, $V$ belongs to $\cM\big(H^1(\bbR),H^{-1}(\bbR)\big)$.  This fact, which we prove for completeness below in Proposition \ref{p2.7}, relies on the following algebraic property of the $H^1(\bbR)$ norm.

\begin{lemma}\lb{l2.7a}
If $\varphi,\psi\in \cS(\bbR)$, then $\|\varphi\psi\|_{H^1(\bbR)}^2\leq 2\|\varphi\|_{H^1(\bbR)}^2\|\psi\|_{H^1(\bbR)}^2$.
\end{lemma}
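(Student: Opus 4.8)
The plan is to reduce the claim to the one-dimensional Agmon (Gagliardo--Nirenberg) inequality and then to bookkeep constants carefully, since a crude estimate would overshoot the asserted constant $2$. Throughout I abbreviate $\|\cdot\|_{2}:=\|\cdot\|_{L^2(\bbR)}$. First I would use Plancherel's theorem together with $\widehat{f'}(\xi)=i\xi\widehat f(\xi)$ to rewrite the Sobolev norm in the concrete form $\|f\|_{H^1(\bbR)}^2=\|f\|_{2}^2+\|f'\|_{2}^2$ for $f\in\cS(\bbR)$, directly from the Fourier-side definition of $\|\cdot\|_{H^1(\bbR)}$. Applying the Leibniz rule $(\varphi\psi)'=\varphi'\psi+\varphi\psi'$ and the elementary bound $(a+b)^2\le 2a^2+2b^2$, one gets
\[
\|\varphi\psi\|_{H^1(\bbR)}^2\le \|\varphi\psi\|_{2}^2+2\|\varphi'\psi\|_{2}^2+2\|\varphi\psi'\|_{2}^2 ,
\]
so it suffices to estimate these three $L^2(\bbR)$-norms of products.

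The workhorse is the inequality $\|\varphi\|_{L^\infty(\bbR)}^2\le \|\varphi\|_{2}\|\varphi'\|_{2}$, valid for $\varphi\in\cS(\bbR)$ (and likewise for $\psi$), which I would prove by writing $|\varphi(x)|^2=\int_{-\infty}^{x}\big(|\varphi|^2\big)'(t)\,dt=-\int_{x}^{\infty}\big(|\varphi|^2\big)'(t)\,dt$, adding the two identities, estimating $\big|\big(|\varphi|^2\big)'\big|\le 2|\varphi|\,|\varphi'|$, and applying the Cauchy--Schwarz inequality. Placing one factor of each product into $L^\infty(\bbR)$ and invoking this bound gives
\[
\|\varphi\psi\|_{2}^2\le \|\varphi\|_{2}\|\varphi'\|_{2}\,\|\psi\|_{2}^2 ,\qquad
\|\varphi\psi'\|_{2}^2\le \|\varphi\|_{2}\|\varphi'\|_{2}\,\|\psi'\|_{2}^2 ,
\]
and, symmetrically, $\|\varphi'\psi\|_{2}^2\le \|\psi\|_{2}\|\psi'\|_{2}\,\|\varphi'\|_{2}^2$.

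Finally I would apply $2\alpha\beta\le \alpha^2+\beta^2$ to each mixed factor, i.e.\ $\|g\|_{2}\|g'\|_{2}\le \tfrac12\big(\|g\|_{2}^2+\|g'\|_{2}^2\big)=\tfrac12\|g\|_{H^1(\bbR)}^2$; the factors $2$ in front of the two derivative terms cancel these halvings, and substituting into the estimate of the first paragraph leaves
\[
\|\varphi\psi\|_{H^1(\bbR)}^2\le \tfrac12\|\varphi\|_{H^1(\bbR)}^2\|\psi\|_{2}^2+\|\psi\|_{H^1(\bbR)}^2\|\varphi'\|_{2}^2+\|\varphi\|_{H^1(\bbR)}^2\|\psi'\|_{2}^2 .
\]
Grouping the first and third terms and using $\tfrac12\|\psi\|_{2}^2+\|\psi'\|_{2}^2\le \|\psi\|_{H^1(\bbR)}^2$, together with $\|\varphi'\|_{2}^2\le \|\varphi\|_{H^1(\bbR)}^2$ in the middle term, yields precisely $2\|\varphi\|_{H^1(\bbR)}^2\|\psi\|_{H^1(\bbR)}^2$, as claimed. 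The only step requiring genuine care is this closing accounting: the auxiliary factors $2$ arising from $(a+b)^2\le 2a^2+2b^2$ must be exactly compensated by the arithmetic--geometric-mean halvings, and it is this balance that pins the final constant at $2$ rather than at something larger. Everything else is routine, the statement being nothing more than a quantitative form of the fact that $H^1(\bbR)$ is a Banach algebra.
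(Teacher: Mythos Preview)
Your proof is correct and follows essentially the same route as the paper. The only difference is cosmetic: the paper invokes the Sobolev inequality directly in the form $2\|\varphi\|_{L^\infty(\bbR)}^2\le\|\varphi\|_{H^1(\bbR)}^2$ and then bounds the integrand symmetrically, whereas you reach that same inequality in two steps (Agmon's $\|\varphi\|_{L^\infty}^2\le\|\varphi\|_2\|\varphi'\|_2$ followed by AM--GM) and handle the three product terms with a slight asymmetry before regrouping; the constant $2$ falls out identically in both arrangements.
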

\begin{proof}
The one-dimensional Sobolev inequality (see, e.g., \cite[Theorem 8.5]{LL01}) yields:
\begin{equation}\lb{2.36aa}
2\|\varphi\|_{L^{\infty}(\bbR)}^2\leq \|\varphi\|_{H^1(\bbR)}^2,\quad \varphi\in \cS(\bbR).
\end{equation}
If $\varphi,\psi\in \cS(\bbR)$, then basic estimates combined with \eqref{2.36aa} yield:
\begin{align}
\|\varphi\psi\|_{H^1(\bbR)}^2 &= \int_{\bbR} \big\{ |\varphi\psi|^2 + |(\varphi\psi)'|^2\big\}\, dx\no\\ 
&\leq \int_{\bbR}\big\{2|\psi|^2\big[|\varphi|^2+|\varphi'|^2 \big] + 2|\varphi|^2\big[|\psi|^2+|\psi'|^2 \big] \big\}\, dx\no\\
&\leq \int_{\bbR}\big\{\|\psi\|_{H^1(\bbR)}^2\big[|\varphi|^2+|\varphi'|^2 \big] + \|\varphi\|_{H^1(\bbR)}^2\big[|\psi|^2+|\psi'|^2 \big] \big\}\, dx\no\\
&= 2\|\varphi\|_{H^1(\bbR)}^2\|\psi\|_{H^1(\bbR)}^2.
\end{align}
\end{proof}

\begin{proposition}\lb{p2.7}
If $V\in H^{-1}(\bbR)$, then $V\in \cM\big(H^1(\bbR),H^{-1}(\bbR)\big)$ and
\begin{equation}\lb{2.35a}
\|V\|_{\cB(H^1(\bbR),H^{-1}(\bbR))}\leq 2^{1/2}\|V\|_{H^{-1}(\bbR)}.
\end{equation}
\end{proposition}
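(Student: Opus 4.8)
The plan is to verify the defining inequality \eqref{2.18} directly, with constant $C_V = 2^{1/2}\|V\|_{H^{-1}(\bbR)}$, and then to read off \eqref{2.35a} from \eqref{2.26a}.

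The starting point is the elementary bound
\[
\big|{}_{\cS(\bbR)}\langle g,V\rangle_{\cS'(\bbR)}\big| \leq \|g\|_{H^1(\bbR)}\,\|V\|_{H^{-1}(\bbR)},\qquad g\in \cS(\bbR),
\]
valid for every $V\in H^{-1}(\bbR)$. Indeed, $V\in H^{-1}(\bbR)$ means that $\big(1+|\cdot|^2\big)^{-1/2}\widehat{V}\in L^2(\bbR)$, so by Plancherel's theorem the pairing on the left equals $\int_{\bbR}\overline{\widehat{g}(\xi)}\,\widehat{V}(\xi)\,d\xi$; writing the integrand as $\overline{\big(1+\xi^2\big)^{1/2}\widehat{g}(\xi)}\cdot\big(1+\xi^2\big)^{-1/2}\widehat{V}(\xi)$ and applying the Cauchy--Schwarz inequality yields the claim, since $\big\|\big(1+|\cdot|^2\big)^{1/2}\widehat{g}\big\|_{L^2(\bbR)}=\|g\|_{H^1(\bbR)}$ and $\big\|\big(1+|\cdot|^2\big)^{-1/2}\widehat{V}\big\|_{L^2(\bbR)}=\|V\|_{H^{-1}(\bbR)}$.

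Next, fix $\varphi,\psi\in \cS(\bbR)$ and observe that $\overline{\varphi}\,\psi\in \cS(\bbR)$. By the definition \eqref{2.18a}, \eqref{2.19aaaa} of $\mathfrak{q}_V$ together with the bound just established,
\[
|\mathfrak{q}_V[\psi,\varphi]| = \big|{}_{\cS(\bbR)}\langle \overline{\varphi}\,\psi, V\rangle_{\cS'(\bbR)}\big| \leq \big\|\overline{\varphi}\,\psi\big\|_{H^1(\bbR)}\,\|V\|_{H^{-1}(\bbR)}.
\]
Since $\overline{\varphi}\in \cS(\bbR)$ with $\big\|\overline{\varphi}\big\|_{H^1(\bbR)}=\|\varphi\|_{H^1(\bbR)}$, applying Lemma \ref{l2.7a} to the pair $\overline{\varphi},\psi$ gives $\big\|\overline{\varphi}\,\psi\big\|_{H^1(\bbR)}\leq 2^{1/2}\|\varphi\|_{H^1(\bbR)}\|\psi\|_{H^1(\bbR)}$, and therefore
\[
|\mathfrak{q}_V[\psi,\varphi]| \leq 2^{1/2}\,\|V\|_{H^{-1}(\bbR)}\,\|\varphi\|_{H^1(\bbR)}\,\|\psi\|_{H^1(\bbR)},\qquad \varphi,\psi\in \cS(\bbR).
\]
This is precisely \eqref{2.18} with $C_V=2^{1/2}\|V\|_{H^{-1}(\bbR)}$, so $V\in \cM\big(H^1(\bbR),H^{-1}(\bbR)\big)$, and then \eqref{2.26a} yields \eqref{2.35a}.

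There is no serious obstacle here; the proof is short once Lemma \ref{l2.7a} is available. The only point requiring a little care is the first step, namely the identification of the $\cS(\bbR)$--$\cS'(\bbR)$ duality pairing of a Schwartz function against $V\in H^{-1}(\bbR)$ with the weighted $L^2$ pairing of Fourier transforms (Parseval for tempered distributions); after that, everything reduces to Cauchy--Schwarz and the multiplicative $H^1$-estimate of Lemma \ref{l2.7a}.
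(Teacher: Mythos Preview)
Your proof is correct and follows essentially the same route as the paper: bound $|\mathfrak{q}_V[\psi,\varphi]|=|\langle\overline{\varphi}\psi,V\rangle|\le\|\overline{\varphi}\psi\|_{H^1(\bbR)}\|V\|_{H^{-1}(\bbR)}$, invoke Lemma~\ref{l2.7a} to control $\|\overline{\varphi}\psi\|_{H^1(\bbR)}$, and conclude via \eqref{2.26a}. The only difference is that you spell out the $H^1$--$H^{-1}$ duality estimate via Plancherel and Cauchy--Schwarz, whereas the paper states it in one line.
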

\begin{proof}
If $\varphi,\psi\in \cS(\bbR)$, then Lemma \ref{l2.7a} (applied to $\overline{\psi}$ and $\varphi$) implies
\begin{align}
|\mathfrak{q}_V[\psi,\varphi]| &= \big|{}_{H^1(\bbR)}\langle \overline{\varphi}\psi,V\rangle_{H^{-1}(\bbR)}\big| \leq \|V\|_{H^{-1}(\bbR)}\|\overline{\varphi}\psi\|_{H^1(\bbR)}\no\\
&\leq 2^{1/2} \|V\|_{H^{-1}(\bbR)} \|\varphi\|_{H^1(\bbR)} \|\psi\|_{H^1(\bbR)}.
\end{align}
Hence, \eqref{2.18} holds with $C_V=2^{1/2}\|V\|_{H^{-1}(\bbR)}$.  Finally, \eqref{2.35a} follows from \eqref{2.26a}.
\end{proof}

For additional results on multipliers on Sobolev spaces see, for instance, \cite{BS17}, \cite{BS19}, \cite{NS99}, \cite{NS06}. 

Proposition \ref{p2.7} provides an elementary sufficient condition for a distribution to be a Sobolev multiplier from $H^1(\bbR)$ to $H^{-1}(\bbR)$. Recalling the notion of locally uniformly $L^p$-integrable, $p\in [1,\infty)$,
functions on $\bbR$,
\begin{equation}\lb{4.5}
\begin{split}
L^p_{\locunif}(\bbR; dx) = \bigg\{f \in L^p_{\loc}(\bbR; dx) \,\bigg|\,
\sup_{a\in\bbR} \bigg(\int_a^{a+1} dx \, |f(x)|^p\bigg) < \infty\bigg\},&\\
p\in[1,\infty),&
\end{split}
\end{equation} 
a complete characterization of the Sobolev multipliers from $H^1(\bbR)$ to $H^{-1}(\bbR)$ is given as follows (see also the summary in \cite[Sect.~3]{GW14}).

\begin{theorem}[{\cite{BS02}, \cite[Sects.~2.5, 11.4]{MS09}, \cite{MV02a}, \cite{MV05}}]
The following statements $(i)$--$(iii)$ hold:\\[1mm]
$(i)$ $V \in \cS'(\bbR)$ belongs to $\cM\big(H^1(\bbR),H^{-1}(\bbR)\big)$ if and only if $V$ is of the form
\begin{equation}
V=q_1+q_2',\quad \text{where $q_j\in L_{\locunif}^j(\bbR)$, $j=1,2$}.    \lb{4.6}
\end{equation}
In addition to \eqref{4.6}, $V$ is compact as a Sobolev multiplier from $H^1(\bbR)$ to $H^{-1}(\bbR)$ if and only if
\begin{equation}
\lim_{|a|\to \infty} \int_a^{a+1} |q_j(x)|^j\, dx = 0,\quad j=1,2.
\end{equation}
$(ii)$ $V\in \cS'(\bbR)$ belongs to $\cM\big(H^1(\bbR),H^{-1}(\bbR)\big)$ if and only if $V$ is of the form
\begin{equation}
V=q_\infty+q_2',\quad \text{where $q_\infty\in L^{\infty}(\bbR),\, q_2\in L_{\locunif}^2(\bbR)$}.
\end{equation}
$(iii)$ $V\in \cS'(\bbR)$ belongs to $\cM\big(H^1(\bbR),H^{-1}(\bbR)\big)$ if and only if $V$ is of the form
\begin{equation}
V=q_0+q_2',\quad \text{where $q_0, q_2\in L_{\loc}^2(\bbR)$}.
\end{equation}
\end{theorem}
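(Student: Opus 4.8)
\smallskip
\noindent\textbf{Proof plan.}
The content is part $(i)$; parts $(ii)$ and $(iii)$ are reformulations of $(i)$ obtained by redistributing mass between the two summands of the decomposition, an operation carried out in \cite{MV02a}, \cite{MV05}, \cite[Sects.~2.5, 11.4]{MS09}, \cite{BS02}. Moreover the necessity half of the proof of $(i)$ will in fact produce a decomposition $V=q_1+q_2'$ with \emph{both} $q_1,q_2\in L^2_{\locunif}(\bbR)$, which already delivers the nontrivial implication of $(iii)$. So I would prove $(i)$ directly in both directions, together with the compactness criterion.

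\noindent\emph{Sufficiency.} If $V=q_1+q_2'$ with $q_1\in L^1_{\locunif}(\bbR)$ and $q_2\in L^2_{\locunif}(\bbR)$, then $V\in\cS'(\bbR)$, and for $\varphi,\psi\in\cS(\bbR)$ the definition \eqref{2.18a} combined with integration by parts gives
\[
\abs{\mathfrak{q}_V[\psi,\varphi]}\le\int_{\bbR}\abs{q_1}\,\abs{\varphi}\,\abs{\psi}\,dx+\int_{\bbR}\abs{q_2}\,\big(\abs{\varphi'}\,\abs{\psi}+\abs{\varphi}\,\abs{\psi'}\big)\,dx .
\]
Each integral is controlled by splitting $\bbR=\bigcup_{m\in\bbZ}[m,m+1)$, using H\"older on each unit tile, then the localized form of \eqref{2.36aa} in the shape $\norm{f}_{L^\infty([m,m+1])}\le C\norm{f}_{H^1([m,m+1])}$, and finally Cauchy--Schwarz in $m$ against $\sum_{m\in\bbZ}\norm{f}_{H^1([m,m+1])}^2=\norm{f}_{H^1(\bbR)}^2$; this yields \eqref{2.18} with $C_V\lesssim\sup_{a\in\bbR}\norm{q_1}_{L^1([a,a+1])}+\sup_{a\in\bbR}\norm{q_2}_{L^2([a,a+1])}$, so $V\in\cM(H^1(\bbR),H^{-1}(\bbR))$. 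For compactness, write $V=\chi_RV+(1-\chi_R)V$ with $\chi_R\in C_c^\infty(\bbR)$, $\chi_R\equiv1$ on $[-R,R]$, $\supp(\chi_R)\subseteq[-R-1,R+1]$; then $\chi_RV=\chi_Rq_1+(\chi_Rq_2)'-\chi_R'q_2$ is a sum of multipliers by, respectively, a compactly supported $L^1(\bbR)$ function, the derivative of a compactly supported $L^2(\bbR)$ function, and a compactly supported $L^2(\bbR)$ function, each of which is a $\cM$-norm limit of multipliers by $C_c^\infty(\bbR)$ functions, and a multiplier by a $C_c^\infty(\bbR)$ function maps $H^1(\bbR)$ boundedly into the $H^1$ functions with a fixed compact support, hence compactly into $L^2(\bbR)\hookrightarrow H^{-1}(\bbR)$. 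Meanwhile the sufficiency estimate gives $\norm{(1-\chi_R)V}_{\cB(H^1(\bbR),H^{-1}(\bbR))}\to0$ as $R\to\infty$ precisely when $\lim_{|a|\to\infty}\int_a^{a+1}\abs{q_j}^j\,dx=0$ for $j=1,2$; a norm limit of compact multipliers being compact, $V$ is then compact. Conversely, a compact multiplier forces these tail conditions by testing $V$ on unit-mass bumps translated to infinity, which tend weakly to $0$ in $H^1(\bbR)$, following \cite{MV02a}, \cite{MV05}.

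\noindent\emph{Necessity.} Fix a real $\eta\in C_c^\infty((-1,2))$ with $\eta\equiv1$ on $[0,1]$ and put $\eta_a:=\eta(\,\cdot\,-a)$. For $\omega\in C_c^\infty((a,a+1))$ one has $\overline{\eta_a}\,\omega=\omega$, so \eqref{2.18a} gives ${}_{\cS(\bbR)}\langle\omega,V\rangle_{\cS'(\bbR)}=\mathfrak{q}_V[\omega,\eta_a]$ and hence, by \eqref{2.18}, $\big|{}_{\cS(\bbR)}\langle\omega,V\rangle_{\cS'(\bbR)}\big|\le C_V\norm{\eta}_{H^1(\bbR)}\norm{\omega}_{H^1(\bbR)}$ uniformly in $a$; since $\omega$ is supported in $(a,a+1)$ this exhibits $V|_{(a,a+1)}$ as an element of $H^{-1}((a,a+1))=[H^1_0((a,a+1))]^*$ of norm $\le C'$ independent of $a$. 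By the Riesz representation theorem applied to the Dirichlet inner product on $H^1_0((a,a+1))$ there are $f_0^{(a)},f_1^{(a)}\in L^2((a,a+1))$ with $V=f_0^{(a)}+(f_1^{(a)})'$ on $(a,a+1)$ and $\norm{f_0^{(a)}}_{L^2}+\norm{f_1^{(a)}}_{L^2}\lesssim C'$. Covering $\bbR$ by the overlapping intervals $I_k:=(k-1,k+1)$, $k\in\bbZ$, choosing a smooth partition of unity $\{\chi_k\}_{k\in\bbZ}$ subordinate to $\{I_k\}$ with $\sup_k\norm{\chi_k}_{C^1(\bbR)}<\infty$, and running the same argument on $I_k$, one obtains $V=g_0^{(k)}+(g_1^{(k)})'$ on $I_k$ with uniformly bounded $L^2$-norms, whence
\[
V=\sum_{k\in\bbZ}\chi_kV=\sum_{k\in\bbZ}\big(\chi_kg_0^{(k)}-\chi_k'g_1^{(k)}\big)+\bigg(\sum_{k\in\bbZ}\chi_kg_1^{(k)}\bigg)'=:q_1+q_2' ,
\]
the sums being locally finite; since each unit interval meets boundedly many $I_k$ and each summand has uniformly bounded $L^2$-norm, $q_1,q_2\in L^2_{\locunif}(\bbR)\subset L^1_{\locunif}(\bbR)$, which is the form required in $(i)$ (and, with both components in $L^2_{\locunif}(\bbR)$, also in $(iii)$).

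\noindent\emph{Main obstacle.} The hard part is the necessity direction, namely converting the family of purely local $H^{-1}$-representations on small intervals into a single global decomposition with $L^p_{\locunif}$-controlled components. The key step is extracting the uniform-in-$a$ estimate $\norm{V|_{(a,a+1)}}_{H^{-1}((a,a+1))}\le C'$ from the multiplier bound \eqref{2.18} via the identity $\overline{\eta_a}\,\omega=\omega$; granting this, the Riesz representation and the partition-of-unity gluing are routine bookkeeping, and the equivalence of $(i)$, $(ii)$, and $(iii)$ is a matter of redistributing the two summands, for which I would refer to \cite{BS02}, \cite[Sects.~2.5, 11.4]{MS09}, \cite{MV02a}, \cite{MV05}.
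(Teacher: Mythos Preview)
The paper does not supply a proof of this theorem; it is stated in the appendix purely as a cited background result, attributed to \cite{BS02}, \cite[Sects.~2.5, 11.4]{MS09}, \cite{MV02a}, \cite{MV05}, with no argument given. So there is no proof in the paper against which to compare your proposal.

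Your plan for part $(i)$ is correct and follows the standard route of those references: sufficiency by tiling $\bbR$ into unit intervals, applying the local Sobolev embedding $\|f\|_{L^\infty([m,m+1])}\le C\|f\|_{H^1([m,m+1])}$, and summing via Cauchy--Schwarz in $m$; necessity by extracting the uniform-in-$a$ bound $\|V|_{(a,a+1)}\|_{H^{-1}((a,a+1))}\le C'$ from the identity $\eta_a\omega=\omega$ together with \eqref{2.18}, and then gluing the local $H^{-1}$ Riesz representations via a smooth partition of unity with bounded overlap. You correctly identify the gluing step as the place where the bookkeeping must be done with care, and your computation $\chi_kV=\chi_kg_0^{(k)}-\chi_k'g_1^{(k)}+(\chi_kg_1^{(k)})'$ is exactly what is needed.

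Two cautions. First, for the necessity half of the compactness criterion, the translated-bump test must be linked back to the \emph{particular} decomposition your construction produces, since $q_1,q_2$ are not unique; this deserves a line or two more than you give it, though deferring to \cite{MV02a}, \cite{MV05} is acceptable. Second, do not take $(iii)$ as a mere ``reformulation'' of $(i)$: the ``if'' direction of $(iii)$ as literally stated, with only $q_0,q_2\in L^2_{\loc}(\bbR)$, is false without further size conditions (e.g., $V=x$ with $q_0=x\in L^2_{\loc}(\bbR)$, $q_2=0$, yet $V\notin\cM(H^1(\bbR),H^{-1}(\bbR))$). The cited references carry the additional hypotheses that restore the equivalence, so your instinct to defer to them for $(ii)$ and $(iii)$ is the right one.
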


By the results of Propositions \ref{p2.1} and \ref{p2.2}, if $V\in \cS'(\bbR)$ is a Sobolev multiplier from $H^1(\bbR)$ to $H^{-1}(\bbR)$, then
\begin{equation}\lb{2.32}
\big(H_0+\kappa^2I_{L^2(\bbR)}\big)^{-1/2}V\big(H_0+\kappa^2I_{L^2(\bbR)}\big)^{-1/2}\in \cB\big(L^2(\bbR)\big),\quad \kappa\in (0,\infty).
\end{equation}

It is clear that all results in this appendix hold for $\bbR$ replaced by $\bbR^n$, $n\in \bbN$, upon suitable modifications. 
 
\medskip
\noindent 
{\bf Acknowledgments.} We are indebted to the referee for very helpful comments and hints to the literature. 

\medskip


 
\end{document}